\pdfoutput=1
\RequirePackage{ifpdf}
\ifpdf 
\documentclass[pdftex]{sigma}
\else
\documentclass{sigma}
\fi

\usepackage{mathrsfs}
\usepackage[all,cmtip]{xy}

\newtheorem{theo}{Theorem}[section]
\newtheorem{lem}[theo]{Lemma}
\newtheorem{propo}[theo]{Proposition}
\newtheorem{cor}[theo]{Corollary}

\theoremstyle{definition}
\newtheorem{defi}[theo]{Definition}
\newtheorem{ex}[theo]{Example}
\newtheorem{remm}[theo]{Remark}

\numberwithin{equation}{section}

\def\nn{\nonumber}

\def\Hom{\mathrm{Hom}}

\def\Der{\mathrm{Der}}

\def\Con{\mathrm{Con}}

\def\dd{\mathrm{d}}
\def\du{\mathrm{d_u}}

\def\id{\mathrm{id}}

\def\coH{{\mathrm{co}H}}

\def\OO{\mathcal{O}}

\def\bbZ{\mathbb{Z}}

\def\bbR{\mathbb{R}}

\def\bbC{\mathbb{C}}
\def\bbT{\mathbb{T}}
\def\bbS{\mathbb{S}}
\def\1{\mathbf{1}}
\def\flip{\mathrm{flip}}
 \def\sw#1{{\sb{\underline{#1}}}}

 \def\can{\mathrm{can}}
 \def\rmod#1{\mathscr{M}{}\!\!_#1}
 \def\rcom#1{\mathscr{M}^#1}
 \def\lmod#1{{}_#1\mathscr{M}}
 \def\lcom#1{{}^#1\!\!\mathscr{M}}
 \def\lcomf#1{{}^#1\!\!\mathscr{M}{}\!_{\mathrm{fin}}}
 \def\proj#1{{}_#1\mathscr{P}}
 \def\ver{\mathrm{ver}}
 \def\ad{\mathrm{ad}}
 \def\s{\mathsf{s}}

\begin{document}
\allowdisplaybreaks

\newcommand{\arXivNumber}{1811.10913}

\renewcommand{\thefootnote}{}

\renewcommand{\PaperNumber}{008}

\FirstPageHeading

\ShortArticleName{On the Relationship between Classical and Deformed Hopf Fibrations}

\ArticleName{On the Relationship between Classical\\ and Deformed Hopf Fibrations\footnote{This paper is a~contribution to the Special Issue on Noncommutative Manifolds and their Symmetries in honour of Giovanni Landi. The full collection is available at \href{https://www.emis.de/journals/SIGMA/Landi.html}{https://www.emis.de/journals/SIGMA/Landi.html}}}

\Author{Tomasz BRZEZI{\'N}SKI~$^{\dag\ddag}$, James GAUNT~$^\S$ and Alexander SCHENKEL~$^\S$}

\AuthorNameForHeading{T.~Brzezi{\'n}ski, J.~Gaunt and A.~Schenkel}

\Address{$^\dag$~Department of Mathematics, Swansea University, Swansea University Bay Campus,\\
\hphantom{$^\dag$}~Fabian Way, Swansea SA1 8EN, UK}
\EmailD{\href{mailto:T.Brzezinski@swansea.ac.uk}{T.Brzezinski@swansea.ac.uk}}
\Address{$^\ddag$~Faculty of Mathematics, University of Bia{\l}ystok,\\
\hphantom{$^\ddag$}~K.~Cio{\l}kowskiego 1M, 15-245 Bia{\l}ystok, Poland}

\Address{$^\S$~School of Mathematical Sciences, University of Nottingham,\\
\hphantom{$^\S$}~University Park, Nottingham NG7 2RD, UK}
\EmailD{\href{mailto:james.gaunt@nottingham.ac.uk}{james.gaunt@nottingham.ac.uk}, \href{mailto:alexander.schenkel@nottingham.ac.uk}{alexander.schenkel@nottingham.ac.uk}}

\ArticleDates{Received September 17, 2019, in final form February 17, 2020; Published online February 23, 2020}

\Abstract{The $\theta$-deformed Hopf fibration $\mathbb{S}^3_\theta\to \mathbb{S}^2$ over the commutative $2$-sphere is compared with its classical counterpart. It is shown that there exists a natural isomorphism between the corresponding associated module functors and that the affine spaces of classical and deformed connections are isomorphic. The latter isomorphism is equivariant under an appropriate notion of infinitesimal gauge transformations in these contexts. Gauge transformations and connections on associated modules are studied and are shown to be sensitive to the deformation parameter. A homotopy theoretic explanation for the existence of a close relationship between the classical and deformed Hopf fibrations is proposed.}

\Keywords{noncommutative geometry; principal comodule algebras; noncommutative principal bundles; Hopf fibrations; homotopy equivalence}

\Classification{81T75; 16T05}

\begin{flushright}
\em To Gianni on the occasion of his 60th birthday
\end{flushright}

\renewcommand{\thefootnote}{\arabic{footnote}}
\setcounter{footnote}{0}

\section{Introduction and summary}\label{sec:intro}
The Hopf fibration $\bbS^3 \to \bbS^2$ is the
prime example of a non-trivial principal $U(1)$-bundle
over the $2$-sphere. From an algebraic perspective,
it can be described as a faithfully-flat Hopf--Galois extension, or equivalently as a principal comodule algebra, consisting of the algebra $A= \mathcal{O}\big(\bbS^3\big)$ of functions on $\bbS^3$ together with the canonically induced coaction $\delta\colon A\to A\otimes H$ of the Hopf algebra $H = \mathcal{O}(U(1))$ of functions on the structure group $U(1)$. Due to its origin in ordinary geometry, this Hopf--Galois extension is special in the sense that the total space algebra $A$, the structure Hopf algebra $H$ and consequently the base space algebra $B := A^{\coH} \cong \mathcal{O}\big(\bbS^2\big)$ are commutative.

As Hopf--Galois theory does not require commutative algebras,
it provides a natural framework in which to study noncommutative
generalizations of principal bundles. In particular, there exists
a $1$-parameter family of deformations of the Hopf fibration
$\bbS^3 \to \bbS^2$, where the total space algebra is deformed to the
Connes--Landi $3$-sphere $A_\theta = \mathcal{O}\big(\bbS^3_\theta\big)$
and the structure Hopf algebra $H$ and base space algebra $B$
remain undeformed. It is important to emphasize that even though
the base space algebra $B$ and structure Hopf algebra $H$
are commutative, these examples are {\em not} commutative
principal bundles since the total space $A_\theta$
is a noncommutative algebra. Hence, one should expect certain,
potentially subtle, noncommutative geometry features in these examples.

In this work, we shall study in detail the
geometric structures on the deformed Hopf
fibrations $\bbS^3_\theta\to \bbS^2$ that are
relevant for gauge theory and compare those with
the corresponding structures on the classical Hopf fibration
$\bbS^3\to \bbS^2$. An interesting observation is that many, however not all,
of these geometric structures coincide for these examples
even though $\bbS^3_\theta\to \bbS^2$ and $\bbS^3\to \bbS^2$
are {\em not} isomorphic as Hopf--Galois extensions. This follows as
$A_\theta = \mathcal{O}\big(\bbS^3_\theta\big)$ is a noncommutative
algebra whilst $A = \mathcal{O}\big(\bbS^3\big)$ is commutative.
In more detail, we prove the following results:
\begin{enumerate}\itemsep=0pt
\item The associated module functors for the classical and deformed
Hopf fibrations are naturally isomorphic, i.e., the theory of associated modules
is insensitive to the deformation parameter $ \theta $. In physics terminology, this
means that we have the same matter fields on the classical and
deformed Hopf fibrations.
\item The affine spaces of connections, with respect to suitable K\"ahler-type differential
calculi, for the classical and deformed Hopf fibrations are isomorphic. It is also shown that this isomorphism is compatible with the action of infinitesimal gauge transformations.
In physics terminology, this means that we have the same gauge fields
on the classical and deformed Hopf fibrations.
\item In contrast to the previous two points,
the action of infinitesimal gauge transformations and connections
on associated modules does depend on the deformation parameter. Hence, it is different for the classical and deformed Hopf fibrations.
In physics terminology, this means that the coupling of gauge fields to matter
fields is sensitive to the deformation parameter.
\end{enumerate}

Even though our direct calculations were able to unravel these striking similarities
between the classical and deformed Hopf fibrations, they provide no
conceptual reason for why these two non-isomorphic Hopf--Galois extensions
should behave similarly in certain respects. As a~first step towards a more conceptual
explanation, we investigate our examples of Hopf--Galois extensions from the homotopy
theoretic perspective proposed by Kassel and Schneider in~\cite{KS}.
We shall show that the classical and deformed Hopf fibrations are homotopy equivalent,
however in a slightly different way as the one proposed by Kassel and Schneider.
In more detail, while the interval object in~\cite{KS} is modelled by
the polynomial algebra $\bbC[y]$ of the affine line,
we require an interval object that is modelled by a larger algebra
that also contains exponential functions.
This is related to the fact that the deformation parameter $\theta$,
which we would like to turn to zero by a homotopy equivalence,
enters the deformed Hopf fibration in an exponential form
$q={\rm e}^{2\pi {\rm i}\theta}$. We give some indications, however not a full proof,
that this homotopy equivalence could be the reason why the classical and
deformed Hopf fibrations have naturally isomorphic associated module functors.
A detailed study of these aspects, and in particular
of the interplay between homotopy equivalence and connections,
is beyond the scope of this paper. However, we hope to come back to this issue in a future work.

The outline of the remainder of this paper is as follows:
In Section \ref{sec:HopfGalois} we provide a brief review of
the theory of Hopf--Galois extensions and principal comodule algebras.
We also introduce the examples of interest in this work,
namely the classical Hopf fibration $\bbS^3\to \bbS^2$ and its
deformation $\bbS^3_\theta \to \bbS^2$ by a suitable family of $2$-cocycles.
In Section \ref{sec:associatedmodules} we describe
the associated module functors for both the classical and deformed
Hopf fibrations and prove that they are naturally isomorphic functors.
We would like to emphasize that this natural isomorphism is a specific feature
of our particular example and not a consequence of the general theory of $2$-cocycle
deformations, see Remark \ref{rem:nococycle}.
Section \ref{sec:connections} starts with a brief review of the theory
of Atiyah sequences and connections on principal comodule algebras.
We shall discuss both the case of universal differential calculi
and also the more general case of concordant differential calculi
on principal comodule algebras. We then describe in detail
the Atiyah sequence for universal and K\"ahler-type differential
calculi on the classical and deformed Hopf fibrations
and construct an isomorphism between the affine spaces of classical and
deformed connections. Again, we emphasize that this isomorphism results from
the specific example under discussion and not from the theory of $2$-cocycle deformations,
see Remark~\ref{rem:connectionbijection}. It is also shown that this isomorphism is
compatible with the action of infinitesimal gauge transformations.
In Section~\ref{sec:associatedcon} we study
gauge transformations and connections on
the associated modules of the classical and deformed Hopf fibrations
and show that these structures are sensitive to the deformation parameter.
In Section~\ref{sec:homotopy} we show that the classical and deformed Hopf fibrations
are in a suitable sense homotopy equivalent and give some indications
why this should imply the properties of associated modules described in Section~\ref{sec:associatedmodules}.

{\bf Notation and conventions:}
Throughout the bulk of the paper, by an algebra we mean an associative and unital algebra over a field $k$. Unless otherwise stated, $k=\bbC$, the field of complex numbers. The exception is Section~\ref{sec:homotopy} where algebras over commutative rings are also admitted.
The multiplication map in an algebra
$A$ is denoted by $\mu_A\colon A\otimes A\to A$ and the unit map by $\eta_A\colon k\to A$.
The unit element $\1_A\in A$, or simply $\1\in A$,
is obtained by evaluating the unit map on $1\in k$.
We denote the category of left $A$-modules by $\lmod A$
and that of right $A$-modules by $\rmod A$.
The full subcategory of finitely generated
projective left $A$-modules is denoted by $\proj A\subseteq \lmod A$.

The comultiplication in a Hopf algebra $H$ is denoted by $\Delta$ (or $\Delta_H$ if not
sufficiently clear from the context), the counit by $\epsilon$ (or $\epsilon_H$), and the
antipode by~$S$ (or~$S_H$). We always assume that~$S$ is a bijective map.
For comultiplications $\Delta\colon H\to H\otimes H$, right $H$-coactions $\delta\colon V\to V\otimes H$
and left $H$-coactions $\rho\colon V\to H\otimes V$, we use the following variant of Sweedler's notation (with suppressed summation)
\begin{gather*}
\Delta(h) = h\sw 1\otimes h\sw 2 , \qquad \delta(v) = v\sw 0\otimes v\sw 1 , \qquad
\rho(v) = v\sw{-1}\otimes v\sw 0 .
\end{gather*}
The category of left $H$-comodules is denoted by $\lcom H$ and that of right $H$-comodules by $\rcom H$.
The category of finite-dimensional left $H$-comodules is denoted by $\lcomf H$.

\section{Noncommutative Hopf fibrations}\label{sec:HopfGalois}

\subsection{Principal comodule algebras}\label{sec:principal}
The study of connections on noncommutative
principal bundles has been initiated in \cite{BrzMaj:gau} and
developed further by the introduction of {\em strong connections} in \cite{DabGro:str, Haj:str}.
This framework has been extended beyond Hopf algebras in \cite{BrzMaj:coa,BrzMaj:fac}
and then formalized in terms of {\em principal coalgebra extensions} in \cite{BrzHaj:Gal} and
{\em principal comodule algebras} in \cite{HajKra:pie}.
\begin{defi}\label{def:princ.com.alg}
Let $H$ be a Hopf algebra with bijective antipode.
A right $H$-comodule algebra $(A,\delta)$
is called a {\em principal comodule algebra}
if it admits a {\em strong connection}, i.e., a linear map $\ell\colon H \to A\otimes A$,
such that
\begin{subequations}\label{str.con}
\begin{alignat}{3}
\label{con.norm}
& \ell (\1_H) = \1_A\otimes \1_A \qquad && \mbox{(normalization)},&\\
\label{con.split}
& \mu_A \circ \ell = \eta_A\circ \epsilon_H \qquad &&\mbox{(splitting property)},&\\
\label{con.right}
&(\id \otimes \delta) \circ \ell = (\ell\otimes \id)\circ\Delta_H \qquad &&\mbox{(right colinearity)},& \\
\label{con.left}
&(\delta_S \otimes\id) \circ \ell = (\id\otimes \ell)\circ\Delta_H \qquad &&\mbox{(left colinearity)}, &
\end{alignat}
\end{subequations}
where $\delta\colon A\to A\otimes H$ is the right $H$-coaction and
$\delta_S\colon A\to H\otimes A$ is the associated left $H$-coaction defined by
\begin{gather*}
\delta_S := \big(S^{-1}\otimes \id\big)\circ \flip \circ \delta\colon \ a\longmapsto S^{-1}(a\sw 1)\otimes a\sw 0.
\end{gather*}
\end{defi}

In the context of noncommutative geometry, principal comodule algebras are interpreted as principal bundles. The algebra $A$ is the algebra of functions on the (noncommutative) total space, the Hopf algebra $H$ is the structure (quantum) group and the subalgebra of {\em coinvariants}
\begin{gather*}
B := A^{\coH} := \big\{a\in A\colon \delta(a) = a\otimes \1_H\big\}\subseteq A
 \end{gather*}
is the algebra of functions on the (noncommutative) base space.
Note that since $\delta$ is an algebra homomorphism, $B$ is indeed a subalgebra of~$A$.
The existence of a strong connection ensures that $A$ is a {\em Hopf--Galois extension}
of $B$, i.e., that the {\em canonical Galois map}
\begin{gather}\label{Galois}
\can := (\mu_A\otimes\id)\circ (\id\otimes \delta)\colon \ A\otimes_B A\longrightarrow A\otimes H ,\qquad
 a\otimes_B a^\prime\longmapsto a\, a^\prime \sw 0\otimes a^\prime\sw 1
 \end{gather}
is bijective. Explicitly, the inverse of the canonical Galois map is given by the composite of
\begin{gather*}
 \xymatrix{A\otimes H \ar[rr]^-{\id \otimes \ell} && A\otimes A\otimes A\ar[rr]^-{\mu_A\otimes \id}
 && A\otimes A \ar@{->>}[r] & A\otimes _B A.}
\end{gather*}
The Galois property encodes freeness of the action of the structure (quantum) group.

The existence of a strong connection also implies that $A$ is an
{\em $H$-equivariantly projective left $B$-module},
i.e., the restriction of the multiplication map to $B\otimes A$ has a
right $H$-comodule left $B$-module splitting. Explicitly,
\begin{gather*}
 \sigma:= (\mu_A\otimes \id)\circ (\id\otimes \ell)\circ \delta\colon \ A\longrightarrow B\otimes A .
\end{gather*}
In fact, a principal comodule algebra is the same as an $H$-equivariantly projective Hopf--Galois extension.
The projectivity property gives the notion of a principal comodule algebra full geometric meaning,
as it implies that $A$ admits a noncommutative connection in the sense of~\cite{CunQui:alg}. This is in perfect concord with Cartan's definition of a principal action of a compact Lie group~\cite{Car:pri}.
We return to these differential geometric aspects of principal comodule algebras in Section~\ref{sec:connections}.

Furthermore, a principal comodule algebra is the same as a {\em faithfully-flat
Hopf--Galois extension}, i.e., a Hopf--Galois extension $A$ of $B$ such that the tensor
product functor $(-)\otimes_B A\colon \rmod B \to \rmod A$ both preserves and reflects
exact sequences. This gives an algebraic geometry flavour to the notion of a principal
comodule algebra, as it leads to the faithfully flat descent property. As observed by H.-J.~Schneider, in one of the key results of Hopf--Galois theory \cite[Theorem~I]{Sch:pri},
if~$H$ admits an invariant integral (i.e., $H$ is {\em coseparable}) such as the Haar
measure on the coordinate algebra of a compact quantum group \cite{Wor:com},
then surjectivity of the canonical Galois map~\eqref{Galois} implies its injectivity
as well as faithful-flatness of $A$ as a left $B$-module.
An explicit construction of a strong connection, in the more general situation of
extensions by coalgebras, is given in~\cite{BegBrz:exp}.
As a consequence, Hopf--Galois extensions given by typical Hopf algebras that
feature in noncommutative geometry, and in particular those found in the present text,
are automatically principal comodule algebras.

The reader interested in studying further the meaning of principal comodule algebras
in classical geometry is encouraged to consult~\cite{BauHaj:non}.

\subsection{\label{subsec:classical}The classical Hopf fibration}
The Hopf fibration $\bbS^3\to\bbS^2$ can be described algebraically as follows.
The total space is given by the $\ast$-algebra $A = \mathcal{O}\big(\bbS^3\big)$
of functions on the algebraic $3$-sphere. Concretely, $A$
is the commutative $\ast$-algebra generated by
$z_1$ and $z_2$, modulo the $\ast$-ideal generated by
the $3$-sphere relation
\begin{gather*}
z_1^\ast z_1 + z_2^\ast z_2= \1 .
\end{gather*}
The structure group is described by the $\ast$-Hopf algebra
$H = \mathcal{O}(U(1))$ of functions on the algebraic
circle group $U(1)$. Concretely, $H$ is the commutative $\ast$-algebra generated
by $t$, modulo the $\ast$-ideal generated by the circle relation
\begin{gather*}
t^\ast t=\1 .
\end{gather*}
The coproduct, counit and antipode read as
\begin{gather*}
\Delta(t) = t\otimes t ,\qquad \epsilon(t) = 1 ,\qquad S(t) = t^\ast .
\end{gather*}
We endow $A$ with the structure of a right $H$-comodule
$\ast$-algebra by defining the right $H$-coaction $\delta\colon A\to A\otimes H$
on the generators as
\begin{subequations}\label{eqn:Hcoaction}
\begin{gather}
\delta(z_1) = z_1 \otimes t ,\qquad \delta(z_2)= z_2\otimes t .
\end{gather}
The compatibility condition $\delta \circ \ast = (\ast\otimes\ast) \circ\delta$
between the coaction and $\ast$-involution gives
\begin{gather}
\delta(z_1^\ast) = z_1^\ast \otimes t^\ast ,\qquad \delta(z_2^\ast)= z_2^\ast\otimes t^\ast .
\end{gather}
\end{subequations}
\begin{lem}\label{lem:2sphere}
The $\ast$-subalgebra of $H$-coinvariants
\begin{gather*}
B := A^{\coH} := \big\{a\in A\colon \delta(a) = a\otimes \1\big\}\subseteq A
\end{gather*}
is isomorphic to the $\ast$-algebra $\mathcal{O}\big(\bbS^2\big)$ of functions on the algebraic $2$-sphere.
\end{lem}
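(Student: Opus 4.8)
The plan is to use the $\bbZ$-grading on $A$ induced by the $U(1)$-coaction and to identify $B$ with its degree-zero part. Since $H=\mathcal{O}(U(1))$ is linearly spanned by the powers $t^n$, $n\in\bbZ$, of the grouplike element $t$, giving $A$ the structure of a right $H$-comodule is the same as giving it a $\bbZ$-grading $A=\bigoplus_{n\in\bbZ}A_n$ with homogeneous components $A_n:=\{a\in A\colon \delta(a)=a\otimes t^n\}$; from \eqref{eqn:Hcoaction} one reads off $z_1,z_2\in A_1$ and $z_1^\ast,z_2^\ast\in A_{-1}$, and $A_mA_n\subseteq A_{m+n}$ because $\delta$ is an algebra map. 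By the very definition of the coinvariants, $B=A^{\coH}=A_0$, so the task reduces to describing $A_0$ explicitly.

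I would then introduce the natural candidate generators
\begin{gather*}
b_0:=z_2^\ast z_2 ,\qquad b_+:=z_1^\ast z_2 ,\qquad b_-:=z_2^\ast z_1=b_+^\ast ,
\end{gather*}
which all lie in $A_0$, and check the defining relations of $\mathcal{O}\big(\bbS^2\big)$: commutativity is inherited from $A$, self-adjointness $b_0=b_0^\ast$ is immediate, and using the $3$-sphere relation in the form $z_1^\ast z_1=\1-z_2^\ast z_2$ one gets $b_+b_-=(z_1^\ast z_1)(z_2^\ast z_2)=b_0-b_0^2$. Here I use that $\mathcal{O}\big(\bbS^2\big)$ admits the presentation as the commutative $\ast$-algebra on a self-adjoint generator $b_0$ and a generator $b_+$, with $b_-:=b_+^\ast$, subject to $b_+b_-=b_0-b_0^2$; this is the algebraic $2$-sphere up to an affine rescaling of the self-adjoint generator, since $b_0-b_0^2=\tfrac14-\big(b_0-\tfrac12\big)^2$. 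This produces a $\ast$-algebra homomorphism $\phi\colon \mathcal{O}\big(\bbS^2\big)\to B$, and the crux is its surjectivity, i.e., that $b_0,b_+,b_-$ generate all of $A_0$. For this I would use the normal-form basis of $A=\mathcal{O}\big(\bbS^3\big)$ consisting of the monomials $z_1^a z_2^b (z_1^\ast)^c (z_2^\ast)^d$ in which $z_1$ and $z_1^\ast$ do not both occur (eliminating $z_1^\ast z_1$ in favour of $\1-z_2^\ast z_2$); this reduction is degree-preserving, so an element of $A_0$ is a combination of such monomials with $a+b=c+d$. Pairing each $z_1$ with a $z_2^\ast$ (a factor $b_-$), each $z_1^\ast$ with a $z_2$ (a factor $b_+$), and the leftover $z_2^\ast z_2$'s (factors $b_0$), the constraint $a+b=c+d$ is exactly what makes this bookkeeping close up and exhibits every such monomial as $b_-^a b_0^b$, $b_+^c b_0^d$, or $b_0^j$.

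Finally I would prove injectivity of $\phi$ by comparing linear bases. The algebra $\mathcal{O}\big(\bbS^2\big)$ has a monomial basis $\{b_0^j\}_{j\ge0}\cup\{b_+^i b_0^j\}_{i\ge1,\,j\ge0}\cup\{b_-^i b_0^j\}_{i\ge1,\,j\ge0}$ (normal form with respect to the relation $b_+b_-=b_0-b_0^2$), and under $\phi$ these are sent to pairwise distinct normal-form monomials of $A$ — explicitly $b_+^i b_0^j\mapsto (z_1^\ast)^i z_2^{i+j}(z_2^\ast)^j$, $b_-^i b_0^j\mapsto z_1^i z_2^j(z_2^\ast)^{i+j}$, $b_0^j \mapsto z_2^j(z_2^\ast)^j$ — hence to a linearly independent subset of $A$; therefore $\phi$ is injective, and together with the surjectivity above it is the desired $\ast$-algebra isomorphism $\mathcal{O}\big(\bbS^2\big)\cong B$. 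Alternatively, injectivity can be obtained without writing down bases: $\mathcal{O}\big(\bbS^2\big)$ and $B\subseteq A$ are commutative domains whose spectra are two-dimensional — the $2$-sphere, respectively the $U(1)$-quotient of $\bbS^3$ — so the prime ideal $\ker\phi$ must vanish. I expect essentially all of the genuine content to sit in the surjectivity step, the remaining verifications being routine.
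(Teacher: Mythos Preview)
Your argument is correct and considerably more detailed than the paper's. The paper's proof is a two-line sketch: it simply declares the generators $z:=2z_1z_2^\ast$ and $x:=z_1^\ast z_1 - z_2^\ast z_2$ of $A^{\coH}$, records the relations $x^\ast=x$ and $z^\ast z + x^2=\1$, and concludes. No surjectivity or injectivity argument is given.

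Your choice of generators $b_0,b_+,b_-$ is equivalent to the paper's via the affine change $x=1-2b_0$, $z=2b_-$, and your presentation $b_+b_-=b_0-b_0^2$ is the same $2$-sphere relation after this rescaling, as you note. What your write-up adds is exactly the content the paper omits: the normal-form basis of $A$ (monomials with $z_1$ and $z_1^\ast$ not both present, obtained by eliminating $z_1^\ast z_1$ via the sphere relation), the observation that degree-zero normal monomials are precisely $b_-^a b_0^b$, $b_+^c b_0^d$, $b_0^j$, and the basis comparison giving injectivity. Your first injectivity argument via explicit bases is clean and self-contained; the alternative dimension-count argument is morally right but slightly circular, since knowing $B$ has two-dimensional spectrum is close to what is being proved. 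I would keep the basis comparison as the primary argument.
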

\begin{proof}We find that $A^{\coH}\subseteq A$
is generated as a $\ast$-algebra by
\begin{gather*}
z := 2 z_1 z_2^\ast\in A^{\coH} ,\qquad
x := z_1^\ast z_1 - z_2^\ast z_2\in A^{\coH} .
\end{gather*}
These generators satisfy the relations
\begin{gather*}
x^\ast =x ,\qquad z^\ast z + x^2 = \1 ,
\end{gather*}
hence $A^{\coH} \cong \mathcal{O}\big(\bbS^2\big)$.
\end{proof}

\begin{propo}\label{propo:HGcommutative}
The right $H$-comodule $\ast$-algebra $(A,\delta)$ described above
is a principal comodule algebra.
\end{propo}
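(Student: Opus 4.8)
The plan is to exhibit an explicit strong connection $\ell\colon H \to A\otimes A$ and to verify the four defining properties \eqref{con.norm}--\eqref{con.left}. Since $H = \mathcal{O}(U(1))$ is generated by a single grouplike element $t$ (together with $t^\ast = t^{-1}$), it suffices to define $\ell$ on the basis $\{t^n : n\in\bbZ\}$ of $H$ and to check the axioms on these elements. The natural guess comes from the $3$-sphere relation: one sets $\ell(\1_H) = \1_A\otimes\1_A$, then $\ell(t) = z_1^\ast\otimes z_1 + z_2^\ast\otimes z_2$ and $\ell(t^\ast) = z_1\otimes z_1^\ast + z_2\otimes z_2^\ast$, and extends multiplicatively by $\ell(t^{n+1}) = \ell(t^n)\,\ell(t)$ (using the product in $A\otimes A$) for $n\geq 1$, and similarly $\ell(t^{-n-1}) = \ell(t^{-n})\,\ell(t^\ast)$ for $n\geq 1$. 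This is the standard strong connection for the classical Hopf fibration.

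First I would record the compatibility $\mu_A\big(\ell(t)\big) = z_1^\ast z_1 + z_2^\ast z_2 = \1_A = \eta_A\big(\epsilon_H(t)\big)$ and likewise for $t^\ast$, which is exactly the $3$-sphere relation; since $\mu_A$ is an algebra map on $A\otimes A$ (as $A$ is commutative) and $\epsilon_H$ is multiplicative, the splitting property \eqref{con.split} then follows for all $t^n$ by the multiplicative definition of $\ell$. Next I would verify right colinearity \eqref{con.right}: on $t$ one computes $(\id\otimes\delta)\ell(t) = z_1^\ast\otimes z_1\otimes t + z_2^\ast\otimes z_2\otimes t = \ell(t)\otimes t = (\ell\otimes\id)\Delta_H(t)$ using \eqref{eqn:Hcoaction} and $\Delta(t) = t\otimes t$; the general case $t^n$ follows because both $\id\otimes\delta$ and $(\ell\otimes\id)\circ\Delta_H$ are compatible with the multiplicative structure. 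Left colinearity \eqref{con.left} is checked the same way, using $\delta_S(z_i^\ast) = S^{-1}(t^\ast)\otimes z_i^\ast = t\otimes z_i^\ast$ on the first leg of $\ell(t)$.

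The main obstacle, such as it is, is purely bookkeeping: one must confirm that the multiplicative extension is consistent and that the verification on the single generator $t$ (and $t^\ast$) genuinely propagates to all powers $t^n$. This works because each of the four conditions is expressed by maps that intertwine the algebra structure on $A\otimes A$ (respectively $A\otimes A\otimes A$, $H\otimes A\otimes A$) with the coproduct $\Delta_H(t) = t\otimes t$, so an inductive argument on $|n|$ closes immediately; commutativity of $A$ is what makes $\mu_A$ and the relevant maps algebra homomorphisms. Once the strong connection is in place, the statement follows directly from Definition~\ref{def:princ.com.alg}. (Alternatively, one could invoke coseparability of $H = \mathcal{O}(U(1))$, which carries the Haar integral, together with surjectivity of the canonical Galois map via Schneider's theorem \cite[Theorem~I]{Sch:pri}, as noted in the excerpt; but the explicit $\ell$ is more useful for the later sections and so is the preferred route.)
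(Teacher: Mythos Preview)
Your proof is correct and follows essentially the same route as the paper: the paper simply cites \cite{BrzezinskiSitarz} and records the same strong connection via the iterative ``sandwich'' formula $\ell(t^n) = z_1^\ast\,\ell(t^{n-1})\,z_1 + z_2^\ast\,\ell(t^{n-1})\,z_2$ (and its analogue for $n<0$), which coincides with your multiplicative extension $\ell(t^n)=\ell(t)^n$ precisely because $A$ is commutative. Your explicit verification of the four axioms \eqref{con.norm}--\eqref{con.left} fills in what the paper leaves to the citation.
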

\begin{proof}This is a special case of what was proven in \cite{BrzezinskiSitarz}.
We note in passing that a strong connection can be defined iteratively by
\begin{gather}\label{str.con.Hopf}
\ell (\1) = \1\otimes \1 , \qquad \ell (t^n) =
\begin{cases}
z_1^\ast \ell\left(t^{n-1}\right) z_1 + z_2^\ast \ell\left(t^{n-1}\right) z_2 ,& \text{for } n>0,\\
z_1 \ell\left(t^{n+1}\right) z^\ast_1 + z_2 \ell\left(t^{n+1}\right) z_2^\ast , & \text{for } n<0,
\end{cases}
\end{gather}
for all $n\in\bbZ$.
\end{proof}

\subsection{2-cocycle deformations}\label{subsec:deformation}
We construct a $1$-parameter family of noncommutative Hopf fibrations
via $2$-cocyle deformations. We refer the reader to~\cite{Aschieri} for a
general cocycle deformation framework and also to~\cite{Brain,Brain2, LvS}
for the more specific case of toric deformations, which is sufficient for our present work.
Let $K = \mathcal{O}\big(\bbT^2\big)$ denote the $\ast$-Hopf algebra of
functions on the algebraic $2$-torus. As a vector space,
$K$ is spanned by the basis
\begin{gather*}
\big\{t_{\mathbf{m}}\colon \mathbf{m} = (m_1,m_2) \in \bbZ^2\big\} ,
\end{gather*}
on which the product, unit and involution are defined by
\begin{gather*}
t_{\mathbf{m}} t_{\mathbf{m}^\prime} = t_{\mathbf{m} + \mathbf{m}^\prime} ,\qquad
\1 = t_{\mathbf{0}} ,\qquad t_{\mathbf{m}}^\ast = t_{-\mathbf{m}} .
\end{gather*}
The coproduct, counit and antipode are given by
\begin{gather*}
\Delta(t_{\mathbf{m}}) = t_{\mathbf{m}}\otimes t_{\mathbf{m}} ,\qquad
\epsilon(t_{\mathbf{m}}) =1 ,\qquad
S(t_{\mathbf{m}}) = t_{\mathbf{m}}^\ast .
\end{gather*}
We endow $A$ with the structure of a left $K$-comodule
$\ast$-algebra by defining the left $K$-coaction $\rho \colon A\to K \otimes A$
on the generators as
\begin{subequations}\label{eqn:Kcoaction}
\begin{gather}
\rho(z_1) = t_{(1,0)}\otimes z_1 ,\qquad
\rho(z_2) = t_{(0,1)} \otimes z_2 .
\end{gather}
The compatibility condition $\rho\circ \ast = (\ast\otimes\ast)\circ\rho$
between the coaction and $\ast$-involution implies
\begin{gather}
\rho(z_1^\ast) = t_{(-1,0)}\otimes z_1^\ast ,\qquad
\rho(z_2^\ast) = t_{(0,-1)} \otimes z_2^\ast .
\end{gather}
\end{subequations}
Recalling the right $H$-coaction from \eqref{eqn:Hcoaction},
we observe that $(A,\rho,\delta)$ is a $(K,H)$-bicomodule $\ast$-algebra,
i.e., the diagram
\begin{gather}\label{eqn:KHcompatibility}\begin{split}
& \xymatrix@C=4em{
\ar[d]_-{\rho}A \ar[r]^-{\delta} & A\otimes H\ar[d]^-{\rho\otimes \id}\\
K\otimes A \ar[r]_-{\id\otimes\delta} & K\otimes A\otimes H}\end{split}
\end{gather}
commutes and both $\delta$ and $\rho$ are algebra maps.

Let us recall from, e.g., \cite[Section~2.3]{Majid} that a {\em $2$-cocycle} on a Hopf algebra $K$ is
a convolution-invertible linear map $\sigma\colon K\otimes K\to\bbC$
that is unital, i.e., $\sigma(a\otimes \1) = \epsilon(a) = \sigma(\1\otimes a)$
for all $a\in K$, and that satisfies the cocycle condition
\begin{gather}\label{eqn:cocyclecondition}
\sigma(b\sw{1} \otimes c\sw{1}) \sigma(a\otimes b\sw{2} c\sw{2})
= \sigma(a\sw{1} \otimes b\sw{1}) \sigma(a\sw{2} b\sw{2}\otimes c) ,
\end{gather}
for all $a,b,c\in K$. Every $2$-cocycle $\sigma$ on $K$ defines
a deformation of $K$ into a new Hopf algebra~$K_\sigma$
as well as a deformation of the $(K,H)$-bicomodule algebra
$(A,\rho,\delta)$ into a deformed $(K_\sigma,H)$-bicomodule algebra
$(A_\sigma,\rho,\delta)$, see, e.g., \cite[Proposition~2.27]{Aschieri}.
Our focus will be on the family of $2$-cocycles defined by
\begin{subequations}\label{eqn:cocycle}
\begin{gather}
\sigma_\theta (t_{\mathbf{m}} \otimes t_{\mathbf{m}^\prime}) =
\exp\big(\pi {\rm i} \mathbf{m}^\mathrm{T}\Theta \mathbf{m}^\prime\big)
= \exp\left(\pi {\rm i} \sum_{j,k=1}^2 m_j \Theta^{jk} m_k^\prime \right) ,
\end{gather}
where
\begin{gather}
\Theta = \begin{pmatrix}
0 & \theta\\
-\theta & 0
\end{pmatrix} ,\qquad \theta\in\bbR .
\end{gather}
\end{subequations}
(We note in passing that \eqref{eqn:cocycle} may also be interpreted as a $U(1)$-valued
group $2$-cocycle $\widetilde{\sigma}_{\theta}\colon \bbZ^2\times \bbZ^2 \to U(1)$, $(\mathbf{m},\mathbf{m}^\prime)
\mapsto \sigma_\theta (t_{\mathbf{m}} \otimes t_{\mathbf{m}^\prime}) $
on the Pontryagin dual $\big(\bbT^2\big)^\ast = \bbZ^2$.)
As $K = \OO\big(\bbT^2\big)$ is commutative and cocommutative it follows that
$K_{\theta} = K$ as Hopf algebras. The deformed $(K,H)$-bicomodule $\ast$-algebra
$(A_{\theta},\rho,\delta)$ is given as follows. As a $(K,H)$-bicomodule,
we have that $(A_{\theta},\rho,\delta)=(A,\rho,\delta)$, i.e., the left $K$-coaction
and right $H$-coaction remain undeformed. The product is deformed to the $\star_\theta$-product
defined by
\begin{gather}\label{eqn:starproduct}
a \star_\theta a^\prime := \sigma_\theta\big(a\sw{-1}\otimes a^\prime\sw{-1}\big) a\sw 0 a^\prime \sw0 .
\end{gather}
Note that the cocycle condition~\eqref{eqn:cocyclecondition} ensures associativity of
the $\star_\theta$-product.
The unit $\1_\theta = \1$ and involution
$a^{\ast_\theta} = a^\ast$ remain undeformed. Using~\eqref{eqn:Kcoaction},
\eqref{eqn:cocycle} and~\eqref{eqn:starproduct},
one finds the following commutation relations for the generators
\begin{alignat}{3}
 &z_1\star_\theta z_1^\ast = z_1^\ast \star_\theta z_1 ,\qquad && z_2\star_\theta z_2^\ast = z_2^\ast \star_\theta z_2 , &\nonumber\\
& z_1\star_\theta z_2 = q z_2\star_\theta z_1 ,\qquad &&z_1\star_\theta z_2^\ast = q^{-1} z_2^\ast \star_\theta z_1 , &\label{eqn:ConnesLandirelations}
\end{alignat}
where $q= {\rm e}^{2\pi {\rm i} \theta}$. A further calculation shows that
\begin{gather}\label{eqn:starsphererelation}
z_1^\ast\star_\theta z_1 + z_2^\ast \star_\theta z_2 = z_1^\ast z_1 + z_2^\ast z_2 = \1 ,
\end{gather}
i.e., $A_\theta$ describes the Connes--Landi $3$-sphere $\bbS^3_\theta$, see, e.g., \cite{CDV,CL,LvS}.
\begin{lem}\label{lem:deformedcoinvariants}
The $\ast$-subalgebra $A_\theta^{\coH}\subseteq A_\theta$ of $H$-coinvariants
does not depend on the deformation parameter $\theta$,
i.e., $A_\theta^{\coH} = A^{\coH} =B \cong \mathcal{O}\big(\bbS^2\big)$
is the commutative $2$-sphere.
\end{lem}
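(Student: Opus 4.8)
The plan is to compare the $H$-coinvariants of $A_\theta$ directly with those of the undeformed $A$, exploiting the fact that, as a $(K,H)$-bicomodule, $(A_\theta,\rho,\delta)$ coincides with $(A,\rho,\delta)$ and only the product has been changed via the $\star_\theta$-product. First I would observe that, since the right $H$-coaction $\delta$ on $A_\theta$ is literally the same linear map as on $A$, and since the definition of coinvariants $A_\theta^{\coH} = \{a\in A_\theta \colon \delta(a) = a\otimes\1\}$ refers only to $\delta$ and the linear structure, we get the equality of underlying sets $A_\theta^{\coH} = A^{\coH} = B$ as linear subspaces of the common underlying vector space of $A_\theta$ and $A$. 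So the only thing left to check is that this common subspace is in fact a $\ast$-subalgebra of $A_\theta$, and that the algebra structure it inherits from $A_\theta$ agrees with the one it inherits from $A$ (so that the isomorphism with $\OO(\bbS^2)$ from Lemma~\ref{lem:2sphere} persists).

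The key step is to show that $\star_\theta$ restricted to $B\otimes B$ coincides with the undeformed product $\mu_A$ restricted to $B\otimes B$. For this I would use the compatibility diagram~\eqref{eqn:KHcompatibility}, which says that the left $K$-coaction $\rho$ is right $H$-colinear; hence $\rho$ restricts to a map $B = A^{\coH}\to K\otimes B$, i.e., $B$ is a left $K$-subcomodule. Now for $b,b'\in B$ the $\star_\theta$-product reads $b\star_\theta b' = \sigma_\theta(b\sw{-1}\otimes b'\sw{-1})\, b\sw 0 b'\sw 0$. The point is that on the generators $z = 2z_1 z_2^\ast$ and $x = z_1^\ast z_1 - z_2^\ast z_2$ of $B$ the left $K$-coaction is trivial: from~\eqref{eqn:Kcoaction} one computes $\rho(z) = t_{(1,0)}t_{(0,-1)}\otimes z = t_{(1,-1)}\otimes z$ and $\rho(x) = t_{(0,0)}\otimes x = \1\otimes x$, so $z$ is \emph{not} $K$-coinvariant but $x$ is. Hence the cleanest route is not to argue generator-by-generator but to note that the degree of $t_{\mathbf m}$ in the $K$-grading that is paired by $\sigma_\theta$ is what matters: since $\sigma_\theta$ is symmetric precisely on the sublattice where $\mathbf m^{\mathrm T}\Theta\mathbf m' = 0$, I would instead show that $B$ sits inside the subalgebra of $A$ on which the $\star_\theta$-product is undeformed. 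Concretely, $A$ carries a $\bbZ^2$-grading induced by $\rho$, with $z_1$ in degree $(1,0)$, $z_1^\ast$ in degree $(-1,0)$, $z_2$ in degree $(0,1)$, $z_2^\ast$ in degree $(0,-1)$; the subalgebra $B$ is spanned by monomials whose total degree lies in the diagonal-type sublattice $\{(n,-n)\colon n\in\bbZ\}$ (indeed $z$ has degree $(1,-1)$, $z^\ast$ degree $(-1,1)$, $x$ degree $(0,0)$), and on any two elements with degrees $\mathbf m = (n,-n)$ and $\mathbf m' = (n',-n')$ one has $\mathbf m^{\mathrm T}\Theta\mathbf m' = \theta(n(-n') - (-n)n') = 0$, so $\sigma_\theta(t_{\mathbf m}\otimes t_{\mathbf m'}) = 1$ and $b\star_\theta b' = b\,b'$.

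Having established that $\star_\theta|_{B\otimes B} = \mu_A|_{B\otimes B}$, it follows immediately that $B$ is closed under $\star_\theta$ (since it is closed under $\mu_A$), hence is a subalgebra of $A_\theta$; that it is a $\ast_\theta$-subalgebra follows since $\ast_\theta = \ast$ and $B$ is a $\ast$-subalgebra of $A$; and the resulting $\ast$-algebra is literally $(B,\mu_A,\ast) \cong \OO(\bbS^2)$ by Lemma~\ref{lem:2sphere}. Putting the pieces together gives $A_\theta^{\coH} = A^{\coH} = B \cong \OO(\bbS^2)$, as claimed. The main obstacle is the bookkeeping in the middle step: one has to be careful that "coinvariant for $\delta$" and "low enough degree for $\rho$ to make $\sigma_\theta$ trivial" are genuinely compatible conditions — that is, that the $\bbZ^2$-degrees actually occurring in $B$ all lie in the antidiagonal sublattice on which $\Theta$ vanishes — which is exactly the computation $\mathbf m^{\mathrm T}\Theta\mathbf m' = 0$ for $\mathbf m,\mathbf m'\in\bbZ(1,-1)$ carried out above; everything else is formal.
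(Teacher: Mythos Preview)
Your proposal is correct and follows essentially the same approach as the paper: compute the left $K$-coaction on the generators $z$, $z^\ast$, $x$ of $B$ (finding degrees $(1,-1)$, $(-1,1)$, $(0,0)$), observe that all of $B$ therefore lives in the antidiagonal sublattice $\bbZ(1,-1)\subseteq\bbZ^2$, and check that $\sigma_\theta$ is trivial there so that $\star_\theta$ restricts to the undeformed product on $B$. The paper's proof is terser (it omits the explicit verification $\mathbf m^{\mathrm T}\Theta\mathbf m'=0$ and the preliminary remark that $A_\theta^{\coH}=A^{\coH}$ as vector spaces), but the content is the same.
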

\begin{proof}Recall from the proof of Lemma \ref{lem:2sphere}
that the undeformed $\ast$-algebra $A^{\coH}$ of coinvariants is generated by
the elements $z := 2 z_1 z_2^\ast$ and $x := z_1^\ast z_1 - z_2^\ast z_2$.
Using~\eqref{eqn:Kcoaction}, we find that the left $K$-coaction on these elements is
\begin{gather}\label{eqn:BcoactionK}
\rho(z) = t_{(1,-1)}\otimes z ,\qquad \rho(z^\ast) = t_{(-1,1)}\otimes z^\ast ,\qquad
\rho(x) = \1\otimes x ,
\end{gather}
which implies via \eqref{eqn:starproduct} and \eqref{eqn:cocycle} that their $\star_\theta$-products coincide with their undeformed pro\-ducts.
\end{proof}

\begin{propo}\label{propo:HGnoncommutative}
The right $H$-comodule $\ast$-algebra $(A_\theta,\delta)$ described above is a principal co\-module algebra.
\end{propo}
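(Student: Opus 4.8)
The plan is to reduce the claim to its classical counterpart, Proposition~\ref{propo:HGcommutative}, by exhibiting an explicit strong connection $\ell_\theta\colon H\to A_\theta\otimes A_\theta$. Since the structure Hopf algebra $H=\OO(U(1))$ is undeformed and is linearly spanned by $\big\{t^n : n\in\bbZ\big\}$, and since $A_\theta$ is still generated by $z_1,z_2$ together with their $\ast$-conjugates, the natural candidate is the $\star_\theta$-analogue of the iterative formula~\eqref{str.con.Hopf}. Writing $\ell_\theta(h)=\ell_\theta(h)^{(1)}\otimes\ell_\theta(h)^{(2)}$ with implicit summation, I would set $\ell_\theta(\1)=\1\otimes\1$ and
\begin{gather*}
\ell_\theta\big(t^n\big)=\sum_{i=1,2}\big(z_i^\ast\star_\theta\ell_\theta\big(t^{n-1}\big)^{(1)}\big)\otimes\big(\ell_\theta\big(t^{n-1}\big)^{(2)}\star_\theta z_i\big)\qquad(n>0),
\end{gather*}
and symmetrically, with $z_i$ and $z_i^\ast$ interchanged and $t^{n-1}$ replaced by $t^{n+1}$, for $n<0$.

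The body of the proof is then an induction on $|n|$ checking the four defining properties~\eqref{con.norm}--\eqref{con.left}. Normalization holds by construction. For the splitting property one uses associativity of $\star_\theta$ to collapse the nested insertion, $\mu_{A_\theta}\big(\ell_\theta(t^n)\big)=\sum_i z_i^\ast\star_\theta\mu_{A_\theta}\big(\ell_\theta(t^{n-1})\big)\star_\theta z_i$, which by the inductive hypothesis and the $\star_\theta$-sphere relation~\eqref{eqn:starsphererelation} equals $z_1^\ast\star_\theta z_1+z_2^\ast\star_\theta z_2=\1=\eta_A\circ\epsilon_H(t^n)$ (for $n<0$ one uses instead $z_1\star_\theta z_1^\ast+z_2\star_\theta z_2^\ast=\1$, a consequence of~\eqref{eqn:ConnesLandirelations} and~\eqref{eqn:starsphererelation}). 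Right $H$-colinearity reduces, by the form of $\Delta_H$ on $t^n$, to $\delta\big(\ell_\theta(t^n)^{(2)}\big)=\ell_\theta(t^n)^{(2)}\otimes t^n$, which follows inductively since $\delta$ is still an algebra map for $\star_\theta$ and $\delta(z_i)=z_i\otimes t$. Left $H$-colinearity is analogous, using that $\delta_S$ is an algebra map (here $H$ is commutative) and $\delta_S(z_i^\ast)=t\otimes z_i^\ast$, so $\delta_S\big(\ell_\theta(t^n)^{(1)}\big)=t^n\otimes\ell_\theta(t^n)^{(1)}$. The only slightly delicate bookkeeping is tracking the $\star_\theta$-reorderings inside the nested expressions, and the point that makes everything go through is that the $2$-cocycle phases of $\sigma_\theta$ couple only to the $K$-coaction and therefore drop out of all four conditions---the colinearity conditions involve only the undeformed $H$-coaction, and the splitting condition collapses through~\eqref{eqn:starsphererelation}. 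I thus expect no genuine obstacle here.

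As an alternative, worth recording as a remark, one can invoke H.-J.~Schneider's theorem~\cite{Sch:pri}: since $H=\OO(U(1))$ is coseparable (it carries the Haar integral $\int t^n=\delta_{n,0}$), it suffices to prove that the canonical Galois map $\can_\theta\colon A_\theta\otimes_B A_\theta\to A_\theta\otimes H$ is surjective, where $B=A_\theta^{\coH}$ is independent of $\theta$ by Lemma~\ref{lem:deformedcoinvariants}. As $\can_\theta(A_\theta\otimes_B A_\theta)$ is a left $A_\theta$-submodule of $A_\theta\otimes H$, it is enough to produce a preimage of $\1\otimes t^n$ for every $n\in\bbZ$; for $n>0$ the element $\sum\big(z_{i_1}^\ast\star_\theta\cdots\star_\theta z_{i_n}^\ast\big)\otimes_B\big(z_{i_n}\star_\theta\cdots\star_\theta z_{i_1}\big)$ works by repeated use of~\eqref{eqn:starsphererelation}, and $n<0$ is symmetric. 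Schneider's theorem then delivers bijectivity of $\can_\theta$ together with faithful flatness of $A_\theta$ over $B$, and a faithfully-flat Hopf--Galois extension is precisely a principal comodule algebra. A third, more conceptual route is to appeal to the general theory of $2$-cocycle deformations: the twist by $\sigma_\theta$ on the external symmetry Hopf algebra $K$ transports principal $H$-comodule algebras to principal $H$-comodule algebras, because it changes the product only through the $K$-coaction, which commutes with $\delta$ by~\eqref{eqn:KHcompatibility}; I would not develop this in detail, since the inductive construction above is shorter and self-contained.
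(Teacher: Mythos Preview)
Your proposal is correct and aligns with the paper's own proof. The paper's primary justification is the general $2$-cocycle deformation result \cite[Corollary~3.19]{Aschieri} applied to Proposition~\ref{propo:HGcommutative}---your third alternative---and it then records precisely the iterative strong-connection formula you wrote down as your main approach, without spelling out the verification of~\eqref{str.con}. So the content is the same; only the emphasis is reversed. Your direct inductive check of~\eqref{con.norm}--\eqref{con.left} is a useful elaboration, and your observation that $\delta_S$ is multiplicative because $H$ is commutative is exactly the point needed for left colinearity. Your second alternative via Schneider's theorem is not used in the paper's proof of this proposition, though the relevant background appears in Section~\ref{sec:principal}; it is a valid independent route.
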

\begin{proof}
This follows directly from Proposition \ref{propo:HGcommutative} and \cite[Corollary 3.19]{Aschieri}.
Explicitly, a strong connection can be defined iteratively by
\begin{gather*}
\ell (\1) = \1\otimes \1 , \qquad
\ell (t^n) = \begin{cases}
 z_1^\ast \star_\theta\ell\left(t^{n-1}\right)\star_\theta z_1 + z_2^\ast\star_\theta \ell\left(t^{n-1}\right)\star_\theta z_2,& \text{for } n>0 ,\\
z_1\star_\theta \ell\left(t^{n+1}\right)\star_\theta z^\ast_1 + z_2\star_\theta \ell\left(t^{n+1}\right)\star_\theta z_2^\ast ,& \text{for } n<0 ,
\end{cases}
\end{gather*}
for all $n\in\bbZ$. (Compare this expression with~\eqref{str.con.Hopf}.)
Alternatively, this proposition follows also from \cite[Lemma~3.19]{Brain2}.
\end{proof}

\section{Associated modules}\label{sec:associatedmodules}
\subsection{Modules associated to principal comodule algebras}
Let $H$ be a Hopf algebra.
Given any right $H$-comodule $(M,\delta \colon M\to M\otimes H)$
and left $H$-comodule $(V,\rho \colon V \to H\otimes V)$, the
{\em cotensor product} $M\Box_HV$ is defined as the equalizer
\begin{subequations}\label{eqn:cotensorproduct}
\begin{gather}\label{cotensor}
\xymatrix{
M\square_H V \ar[r] & M\otimes V \ar@<0.5ex>[rr]^-{\delta \otimes \id}\ar@<-0.5ex>[rr]_-{\id \otimes\rho} &&
M\otimes H\otimes V
}
\end{gather}
in the category of vector spaces. Explicitly,
\begin{gather}
M \square_H V = \bigg\{\sum_j m_j\otimes v_j \in M\otimes V
\colon \sum_j \delta (m_j) \otimes v_j = \sum_j m_j\otimes \rho (v_j) \bigg\} .
\end{gather}
\end{subequations}
If $M$ is a $(K,H)$-bicomodule and $V$ is an $(H,L)$-bicomodule
for some other Hopf algebras $K$ and $L$, then the cotensor product
$M \square_H V$ is a $(K,L)$-bicomodule.
\begin{defi}\label{def.associated}
Let $(A,\delta)$ be a principal $H$-comodule algebra
with coinvariant subalgebra $B=A^{\coH}$ and let $(V,\rho)$ be a left $H$-comodule.
Since $\delta$ is a left $B$-module homomorphism, the cotensor product
$E_A(V) := A\Box_H V$ is a left $B$-module via
\begin{gather}\label{eqn:leftBaction}
B\otimes E_A(V)\longrightarrow E_A(V) ,\qquad b\otimes (a\otimes v) \longmapsto (b\,a) \otimes v .
\end{gather}
The corresponding functor
\begin{gather}\label{eqn:associatedmodules}
E_A\colon \ \lcom H \longrightarrow \lmod B
\end{gather}
is called the {\em associated module functor} for the principal comodule algebra $A$.
\end{defi}

As explained in \cite{BrzHaj:Che}, if $V$ is finite-dimensional, then $E_A(V)$ is a finitely generated projective
left $B$-module. In other words, \eqref{eqn:associatedmodules} restricts to a functor
\begin{gather*}
E_A\colon \ \lcomf H \longrightarrow \proj B .
\end{gather*}
An idempotent for $E_A(V)$ can be explicitly constructed from a~strong connection $\ell$ on~$A$ and a~basis of~$V$, see \cite{BrzHaj:Che} and \cite{BohBrz:rel}.

\subsection{Modules associated to Hopf fibrations}\label{subsec:associated}
The aim of this section is to compare the associated module functor $E_{A_\theta}$
for the deformed Hopf fibration $(A_\theta,\delta)$ (cf.\ Proposition~\ref{propo:HGnoncommutative})
with the functor $E_A$ for the classical Hopf fibration $(A,\delta)$
(cf.\ Proposition~\ref{propo:HGcommutative}).
As vector spaces,
\begin{gather*}
E_{A_\theta}(V)=A_\theta \square_H V = A\square_H V= E_A(V) .
\end{gather*}
However, the left $B$-actions on these spaces are different. In view of~\eqref{eqn:leftBaction},
the left $B$-module structure of $E_A(V)$ comes from the commutative multiplication
in $A$, restricted to $B\otimes A$, while the left $B$-module structure of
$E_{A_\theta}(V)$ uses the noncommutative multiplication by $\star_\theta$.
We shall prove below that the two functors $E_{A_\theta}$ and $E_A$
are naturally isomorphic. This means that the theory of associated modules for the deformed
Hopf fibration $(A_\theta,\delta)$ is equivalent to that for the classical Hopf fibration.
Loosely speaking, it could be said that associated modules do not
depend on the deformation parameter $\theta$.

In order to construct this natural isomorphism we have to analyze
the two different left $B$-module structures in more detail. For this, it is convenient to
decompose the underlying $(K,H)$-bicomodule $(A_\theta ,\rho,\delta) = (A,\rho,\delta)$
into irreducible representations. We define the homogeneous
$(K,H)$-bicomodules
\begin{gather*}
A^{(\mathbf{m},n)} := \big\{ a\in A \colon \rho(a) = t_{\mathbf{m}}\otimes a,\,
 \delta(a) = a\otimes t^n\big\} \subseteq A ,
\end{gather*}
for all $\mathbf{m}=(m_1,m_2)\in \bbZ^2$ and $n\in\bbZ$.
\begin{lem}
There exists a decomposition of the $(K,H)$-bicomodule
$(A_\theta ,\rho,\delta) = (A,\rho,\delta)$ as
\begin{gather}\label{eqn:Adecomposition}
A = \bigoplus_{m,n\in\bbZ} A^{((m+n,-m),n)} .
\end{gather}
The $\star_\theta$-product of homogeneous elements
$a\in A^{((m+n,-m),n)} $ and $a^\prime \in A^{((m^\prime+n^\prime,-m^\prime),n^\prime)} $
is
\begin{gather}\label{eqn:starproductdecomposition}
a\star_\theta a^\prime = {\rm e}^{\pi {\rm i} \theta (m n^\prime - n m^\prime)} a a^\prime \in
A^{((m+m^\prime +n + n^\prime ,-m - m^\prime ),n + n^\prime)} .
\end{gather}
\end{lem}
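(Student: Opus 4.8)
The plan is to establish the decomposition \eqref{eqn:Adecomposition} first and then read off the $\star_\theta$-product formula \eqref{eqn:starproductdecomposition} directly from the definition of the cocycle. For the decomposition, I would start from the fact that $A$ is spanned as a vector space by monomials in the generators $z_1, z_2, z_1^\ast, z_2^\ast$ (subject to the commutative relations and the sphere relation). Each such monomial is a simultaneous eigenvector for both coactions: from \eqref{eqn:Hcoaction} the generators $z_1, z_2$ carry $H$-degree $+1$ and $z_1^\ast, z_2^\ast$ carry $H$-degree $-1$, while from \eqref{eqn:Kcoaction} the $K$-degree of a monomial is $(a_1 - b_1, a_2 - b_2)$ where $a_i$ counts occurrences of $z_i$ and $b_i$ counts occurrences of $z_i^\ast$. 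Hence $A$ is the direct sum of its homogeneous pieces $A^{(\mathbf{m},n)}$, and it remains to check which bidegrees $(\mathbf{m},n)$ actually occur. Writing $\mathbf{m} = (\mu_1,\mu_2)$ with $\mu_1 = a_1 - b_1$, $\mu_2 = a_2 - b_2$, and $n = (a_1 + a_2) - (b_1 + b_2)$, one sees that $\mu_1 + \mu_2 = n$, so every occurring bidegree is of the form $((m+n,-m),n)$ for $m := -\mu_2 \in \bbZ$ and $n \in \bbZ$; conversely each such bidegree is realized (e.g.\ by an appropriate monomial in the four generators, using the sphere relation only to reduce, never to obstruct, a given bidegree). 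This gives \eqref{eqn:Adecomposition}.

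For the $\star_\theta$-product formula, I would simply evaluate \eqref{eqn:starproduct} on homogeneous elements. If $a \in A^{((m+n,-m),n)}$ then $\rho(a) = t_{(m+n,-m)} \otimes a$, and similarly $\rho(a^\prime) = t_{(m^\prime+n^\prime,-m^\prime)} \otimes a^\prime$, so
\begin{gather*}
a \star_\theta a^\prime = \sigma_\theta\big(t_{(m+n,-m)} \otimes t_{(m^\prime+n^\prime,-m^\prime)}\big)\, a\, a^\prime .
\end{gather*}
Plugging the vectors $\mathbf{m} = (m+n,-m)$ and $\mathbf{m}^\prime = (m^\prime+n^\prime,-m^\prime)$ into the formula \eqref{eqn:cocycle} for $\sigma_\theta$ gives $\exp\big(\pi {\rm i}\,\mathbf{m}^{\mathrm{T}}\Theta\mathbf{m}^\prime\big)$, and since $\Theta$ is the antisymmetric matrix with off-diagonal entry $\theta$, one computes $\mathbf{m}^{\mathrm{T}}\Theta\mathbf{m}^\prime = \theta\big((m+n)(-m^\prime) - (-m)(m^\prime+n^\prime)\big) = \theta(m n^\prime - n m^\prime)$ after cancellation of the $m m^\prime$ terms. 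This yields the scalar ${\rm e}^{\pi {\rm i}\theta(m n^\prime - n m^\prime)}$ in \eqref{eqn:starproductdecomposition}. Finally, that $a a^\prime$ lies in $A^{((m+m^\prime+n+n^\prime,-m-m^\prime),n+n^\prime)}$ follows because $\rho$ and $\delta$ are algebra maps, so the ordinary product is additive in both bidegrees, and additivity in the $(m,n)$-parametrization is immediate; the scalar prefactor does not change the bidegree.

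The only genuinely non-routine point is verifying that the set of bidegrees appearing in the decomposition is exactly $\{((m+n,-m),n) : m,n \in \bbZ\}$ — i.e.\ the bookkeeping argument that $\mu_1 + \mu_2 = n$ for every monomial and that all such bidegrees are attained. Everything else is a direct substitution into the definitions already recorded in the excerpt. I would also remark that the $\star_\theta$-product and the undeformed product agree on the coinvariant subalgebra $B$, which sits in the bidegrees with $n = 0$ (consistently with Lemma \ref{lem:deformedcoinvariants}), since then the exponent $\pi {\rm i}\theta(m\cdot 0 - 0\cdot m^\prime) = 0$.
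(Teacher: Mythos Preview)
Your proposal is correct and follows essentially the same approach as the paper: identify the bidegrees of the generators, conclude that every monomial has $K$-degree $(\mu_1,\mu_2)$ satisfying $\mu_1+\mu_2=n$, and then compute the $\star_\theta$-product by direct substitution into~\eqref{eqn:starproduct} and~\eqref{eqn:cocycle}. The only cosmetic difference is that the paper invokes an external reference for the existence of the general bigrading decomposition while you spell out the monomial argument explicitly; your cocycle computation and the resulting phase $e^{\pi i\theta(mn'-nm')}$ match exactly.
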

\begin{proof}Because $A$ is the $(K,H)$-bicomodule underlying a
finitely presented $(K,H)$-bicomodule algebra, there exists a decomposition
$A= \bigoplus_{\mathbf{m}\in\bbZ^2 ,n\in\bbZ} A^{(\mathbf{m},n)}$,
cf.\ \cite[Lemma~A.3]{Barnes}. As the generators
are homogeneous elements
\begin{gather*}
z_1\in A^{((1,0),1)}, \qquad z_2\in A^{((0,1),1)}, \qquad z_1^\ast \in A^{((-1,0),-1)},\qquad z_2^\ast\in A^{((0,-1),-1)} ,
\end{gather*}
we observe that the non-vanishing components are as in~\eqref{eqn:Adecomposition}.
The formula for the $\star_\theta$-product on homogeneous elements
follows directly from~\eqref{eqn:starproduct} and~\eqref{eqn:cocycle}.
\end{proof}

It follows from \eqref{eqn:Adecomposition} that
\begin{gather*}
B = A^{\coH} = \bigoplus_{m\in\bbZ} A^{((m,-m),0)} \subseteq A
\end{gather*}
for the $\ast$-subalgebra of $H$-coinvariants.

Given any left $H$-comodule $(V,\rho)$, the vector space underlying
the associated left $B$-module admits a decomposition
\begin{gather*}
A \square_H V = \bigoplus_{m,n\in \bbZ} A^{((m+n,-m),n)}\otimes V^{n} ,
\end{gather*}
where
\begin{gather*}
V^n := \big\{ v \in V \colon \rho(v) = t^n \otimes v \big\} .
\end{gather*}
By \eqref{eqn:starproductdecomposition},
the deformed left $B$-action of homogeneous elements $b\in A^{((m,-m),0)}\subseteq B$
and $a\otimes v \in A^{((m^\prime + n^\prime ,-m^\prime ),n^\prime ) }\otimes V^{n^\prime} \subseteq A\square_H V$
is given by
\begin{gather*}
b\star_\theta (a\otimes v) = (b\star_\theta a)\otimes v = {\rm e}^{\pi {\rm i} \theta m n^\prime} (b\,a)\otimes v .
\end{gather*}
\begin{propo}\label{propo:moduleequivalence}
For every left $H$-comodule $(V,\rho)$, define a linear isomorphism
\begin{subequations}\label{eqn:Lcomponents}
\begin{gather}
L_{V}^{}\colon \ A \square_H V \longrightarrow A_\theta \square_H V
\end{gather}
by setting
\begin{gather}
L_{V}^{}\big(a\otimes v\big) = {\rm e}^{\pi {\rm i} \theta mn} a\otimes v ,
\end{gather}
\end{subequations}
for all homogeneous elements $a\otimes v\in A^{((m+n,-m),n)} \otimes V^n$. This linear isomorphism
is a left $B$-module isomorphism for $E_A(V)$ and $E_{A_\theta}(V)$.
Moreover, the components~\eqref{eqn:Lcomponents} define a natural
isomorphism $L \colon E_{A}\Longrightarrow E_{A_\theta}$
between the associated module functor for the classical Hopf fibration
$\bbS^3\to \bbS^2$ and the one for the deformed Hopf fibration $(A_\theta,\delta)$.
\end{propo}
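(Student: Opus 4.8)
The plan is to verify the three assertions of the proposition in turn: that each $L_V$ is a well-defined linear isomorphism, that it intertwines the undeformed left $B$-action on $E_A(V)$ with the $\star_\theta$-deformed one on $E_{A_\theta}(V)$, and that the family $\{L_V\}$ is natural in $V$. I do not expect a genuine obstacle; everything reduces to bookkeeping with the $(K,H)$-bigrading. The closest thing to a subtlety is that the proposed rescaling must be checked to descend to the cotensor (equalizer) subspace of $A\otimes V$ and not merely to $A\otimes V$ itself, together with the precise placement of indices when invoking~\eqref{eqn:starproductdecomposition}.

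First I would establish the relevant direct-sum decomposition. Since $H=\OO(U(1))$ is spanned by the grouplike elements $t^n$, every left $H$-comodule splits as $V=\bigoplus_{n\in\bbZ}V^n$; combining this with the bicomodule decomposition~\eqref{eqn:Adecomposition} of $A$, the equalizer condition~\eqref{eqn:cotensorproduct} matches $H$-degrees and produces the decomposition $A\square_HV=\bigoplus_{m,n\in\bbZ}A^{((m+n,-m),n)}\otimes V^n$ already recorded before the statement, each summand being the $t_{(m+n,-m)}$-isotypic $K$-subcomodule. As this is a genuine internal direct sum, rescaling the $(m,n)$-summand by the nonzero scalar ${\rm e}^{\pi{\rm i}\theta mn}$ gives a well-defined linear map $L_V$, which is bijective with inverse given by rescaling by ${\rm e}^{-\pi{\rm i}\theta mn}$.

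Next I would check left $B$-linearity on homogeneous elements $b\in A^{((m,-m),0)}\subseteq B$ and $a\otimes v\in A^{((m'+n',-m'),n')}\otimes V^{n'}$. Here $b\,a\in A^{((m+m'+n',-m-m'),n')}$ sits in the $(m+m',n')$-summand, so $L_V(b\,a\otimes v)={\rm e}^{\pi{\rm i}\theta(m+m')n'}(b\,a)\otimes v$; on the other hand~\eqref{eqn:starproductdecomposition} gives $b\star_\theta a={\rm e}^{\pi{\rm i}\theta mn'}\,b\,a$, whence
\begin{gather*}
b\star_\theta L_V(a\otimes v)={\rm e}^{\pi{\rm i}\theta m'n'}\,(b\star_\theta a)\otimes v={\rm e}^{\pi{\rm i}\theta m'n'}{\rm e}^{\pi{\rm i}\theta mn'}(b\,a)\otimes v .
\end{gather*}
The two phases agree because $m'n'+mn'=(m+m')n'$, so $L_V$ is left $B$-linear after linear extension over all summands. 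This is the computational heart of the argument.

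Finally, for naturality I would take a morphism $f\colon V\to W$ of left $H$-comodules; since $f$ is grading preserving ($f(V^n)\subseteq W^n$), both $E_A(f)=\id_A\square_Hf$ and $E_{A_\theta}(f)$ send a homogeneous $a\otimes v\in A^{((m+n,-m),n)}\otimes V^n$ to $a\otimes f(v)$ in the $(m,n)$-summand of $A\square_HW$, so $L_W\circ E_A(f)$ and $E_{A_\theta}(f)\circ L_V$ both act by $a\otimes v\mapsto{\rm e}^{\pi{\rm i}\theta mn}\,a\otimes f(v)$ and the naturality square commutes. This would complete the proof.
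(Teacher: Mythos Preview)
Your proposal is correct and follows essentially the same approach as the paper's proof: the key computation is the comparison of $L_V(b\,a\otimes v)$ with $b\star_\theta L_V(a\otimes v)$ on homogeneous elements, reducing to the identity $(m+m')n'=m'n'+mn'$ of phases, which is exactly what the paper does. You are somewhat more explicit about well-definedness, bijectivity, and naturality than the paper (which dismisses naturality as ``a straightforward check''), but the content and strategy are the same.
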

\begin{proof}
For homogeneous elements $b\in A^{((m,-m),0)}\subseteq B$
and $a\otimes v \in A^{((m^\prime + n^\prime ,-m^\prime ),n^\prime )}\otimes V^{n^\prime} \subseteq A\square_H V$,
we obtain
\begin{gather*}
L_{V}^{}\big(b (a\otimes v)\big) = {\rm e}^{\pi {\rm i} \theta (m+m^\prime)n^\prime} (b a)\otimes v ,
\end{gather*}
where $(b a)\otimes v\in A^{((m+m^\prime + n^\prime ,-m-m^\prime ),n^\prime) }\otimes V^{n^\prime} $,
and
\begin{gather*}
b\star_\theta L_{V}^{}\big( a \otimes v\big) = {\rm e}^{\pi{\rm i} \theta m^\prime n^\prime} (b\star_\theta a)\otimes v =
 {\rm e}^{\pi {\rm i} \theta m^\prime n^\prime} {\rm e}^{\pi {\rm i} \theta m n^\prime} (b a)\otimes v .
\end{gather*}
These two terms coincide, hence $L_{V}^{}$ is a left $B$-module
isomorphism. Naturality is a straightforward check.
\end{proof}
\begin{remm}\label{rem:nococycle}
We would like to emphasize that Proposition~\ref{propo:moduleequivalence}
is {\em not} a consequence of the general theory of $2$-cocycle deformations
from~\cite{Brain,Brain2} and~\cite{Aschieri}.
In such a setting, it is convenient to observe that
the $K$-coaction on $A$ induces to associated modules,
i.e., $E_A \colon \lcom H \to {}_{B}^{K}\mathscr{M}$
can be regarded as a functor to the category
of left $K$-comodule left $B$-modules.
Consequently, the deformed associated module functor
$E_{A_\theta} \colon \lcom H \to {}_{B_\theta}^{K_\theta}\mathscr{M} $
can be regarded as a functor to the category
of left $K_\theta$-comodule left $B_\theta$-modules,
where $K_\theta$ and $B_\theta$ are the $2$-cocycle deformations of
$K$ and $B$. (In our specific example of interest, we have that $K_\theta=K$
and $B_\theta=B$ (cf.\ Lemma~\ref{lem:deformedcoinvariants}),
but we shall keep these labels to make the discussion below more transparent.)
Using the $2$-cocycle deformation functor
from \cite[Proposition~2.4]{Brain2} or \cite[Proposition~2.25]{Aschieri},
which we shall denote by $\Sigma_\theta$, one obtains a commutative diagram
\begin{gather*}
\xymatrix@C=4em{
\ar[rd]_-{E_{A_\theta}} \lcom H \ar[r]^-{E_A} & {}_{B}^{K}\mathscr{M}\ar[d]^-{\Sigma_\theta}\\
 & {}_{B_\theta}^{K_\theta}\mathscr{M}
}
\end{gather*}
relating the associated module functor $E_{A_\theta}$ for the deformed
Hopf fibration to the associated module functor $E_A$ for the classical one.
In words, each deformed associated module $E_{A_\theta}(V)$ can be determined
by applying the deformation functor to the classical associated module $E_A(V)$.
For general $2$-cocycle deformations that is all one can say.

For our special example given by the deformed Hopf fibration $A_\theta$,
we have that $K_\theta=K$ and $B_\theta =B$ are undeformed,
hence the deformed and the undeformed associated
module functors $E_A , E_{A_\theta} \colon \lcom H \to {}_{B}^{K}\mathscr{M}$
have the same target category. Proposition~\ref{propo:moduleequivalence}
proves that these two functors are already `the same'
(in the sense of naturally isomorphic) even if we do not use the deformation
functor $\Sigma_\theta$. Hence, our natural isomorphism in
Proposition \ref{propo:moduleequivalence} is more special and
stronger than the results from the general theory of $2$-cocycle
deformations from~\cite{Brain,Brain2} and~\cite{Aschieri}.
\end{remm}

\section{The Atiyah sequence and connections}\label{sec:connections}
\subsection{Differential geometry of principal comodule algebras}
Let us start with a brief review of some relevant concepts from
noncommutative differential calculi, see, e.g., \cite{Landi} and \cite{BegMaj:Rie} for more details.
\begin{defi}\label{def.diff.calc}
A {\em $($first-order$)$ differential calculus} on an algebra $A$
is a pair $\big(\Omega^1(A),\dd\big)$ consisting of an $A$-bimodule $\Omega^1(A)$
and a linear map $\dd\colon A\to \Omega^1(A)$, such that
\begin{itemize}\itemsep=0pt
\item[(i)] $\dd(a a^\prime) = a \dd(a^\prime) + \dd(a) a^\prime$,
for all $a,a^\prime\in A$,
\item[(ii)] $\Omega^1(A) = A \dd(A) = \big\{\sum_j a_j \dd(a_j^\prime) \colon a_j,a_j^\prime\in A\big\}$.
\end{itemize}
We say that $\big(\Omega^1(A),\dd\big)$ is {\em connected} if
$\dd(a) = 0$ if and only if $a\in k\1 \subseteq A$.
\end{defi}

Every differential calculus is a quotient
of the {\em universal calculus} $\big(\Gamma^1(A),\du\big)$.
Recall that the $A$-bimodule of {\em universal $1$-forms}
$\Gamma^1(A) := \ker\mu_A \subseteq A\otimes A$ is the kernel
of the multiplication map and that the {\em universal differential}
$\du \colon A\to \Gamma^1(A)$ is defined as $\du(a) = \1\otimes a - a\otimes \1$,
for all $a\in A$. We find that $\big(\Gamma^1(A),\du\big)$ is a connected
differential calculus. Given any $A$-subbimodule $N\subseteq \Gamma^1(A)$,
the $A$-bimodule $\Omega^1(A) = \Gamma^1(A)/N$ and
\begin{gather*}
\xymatrix{
A \ar[rr]^-{\dd} \ar[dr]_-{\du} && \Omega^1(A)\\
& \Gamma^1(A)\ar@{->>}[ur] &
}
\end{gather*}
defines a differential calculus $\big(\Omega^1(A),\dd\big)$. Vice versa,
every differential calculus is of this form, see, e.g., \cite[Proposition~6.1]{Landi} for a proof.

The case where $H$ is a Hopf algebra was studied in detail in \cite{Wor:dif}.
Any right ideal $Q\subseteq H^+$ of the augmentation ideal $H^+:=\ker \epsilon$
that is invariant under the right adjoint $H$-coaction $\ad\colon H^+\to H^+\otimes H$, $h\mapsto h\sw 2 \otimes S(h \sw 1 ) h\sw 3$,
i.e., $\ad(Q) \subseteq Q\otimes H$, induces a {\em bicovariant differential
calculus} on $H$. The corresponding $H$-subbimodule $N_H\subseteq \Gamma^1(H)$
is generated by the image of $Q\subseteq H^+$ under the linear map
$\kappa := (S\otimes\id)\circ\Delta\colon H^+ \to \Gamma^1(H)$.
The quotient right $H$-comodule
\begin{gather*}
\mathfrak{h}^\vee_Q := H^+ / Q
\end{gather*}
plays the role of the dual of the quantum Lie algebra of $H$ relative to $Q$.
By construction, there exists a map $\kappa_Q$ that fits into the commutative diagram
\begin{gather*}
\xymatrix{
0 \ar[r] & Q \ar[r]\ar[d]_-{\kappa} & H^+ \ar[r]\ar[d]_-{\kappa} & \mathfrak{h}^\vee_Q \ar[r]\ar@{-->}[d]^-{\kappa_Q} &0 \,\\
0 \ar[r] & N_H \ar[r] & \Gamma^1(H) \ar[r] & \Omega^1(H) \ar[r] &0 ,
}
\end{gather*}
whose rows are short exact sequences.
The resulting $H$-bimodule of $1$-forms $\Omega^1(H) = \Gamma^1(H)/N_H$
is freely generated by $\kappa_Q(\chi_i)\in \Omega^1(H)$, where $\{\chi_i\}$
is a basis of $\mathfrak{h}^\vee_Q$.

For a right $H$-comodule algebra $(A,\delta)$, it is natural
to demand that the $A$-subbimodule $N\subseteq \Gamma^1(A)$
is invariant under the tensor product right $H$-coaction, i.e., $\delta(N)\subseteq N\otimes H$. More explicitly,
\begin{gather*}
\sum_{j} (a_j)\sw 0 \otimes (a_j^\prime)\sw 0 \otimes (a_j)\sw 1 (a_j^\prime)\sw 1 \in N\otimes H ,
\end{gather*}
for all $\sum_j a_j\otimes a_j^\prime \in N$. The corresponding
differential calculus $\big(\Omega^1(A),\dd\big)$ then satisfies the
property that $\Omega^1(A)$ is a right $H$-comodule $A$-bimodule
and that $\dd\colon A\to \Omega^1(A)$ is a right $H$-comodule morphism.
Further, given any $\ad$-invariant right ideal $Q\subseteq H^+$,
we may require that~$Q$ and~$N$ are compatible in the sense that
$\ver(N) \subseteq A\otimes Q$, where the {\em $($universal$)$ vertical
lift} is defined as the linear map
\begin{align}
\nn \ver := (\mu_A\otimes \id)\circ (\id\otimes \delta) \colon \ \Gamma^1(A) &\longrightarrow A\otimes H^+, \\
\sum_j a_j\otimes a^\prime_j &\longmapsto \sum_j a_j (a^\prime_j)\sw 0\otimes (a^\prime_j)\sw 1 .\label{lift}
\end{align}
Observe that in this case, the (universal) vertical lift descends to a linear map
$\overline{\ver} \colon \Omega^1(A) \to A\otimes \mathfrak{h}^\vee_Q$ defined by
the diagram
\begin{gather*}
\xymatrix{
\Gamma^1(A)\ar[r]\ar[d]_-{\ver} & \Omega^1(A) \ar[r]\ar@{-->}[d]^-{\overline{\ver}} &0\\
A\otimes H^+ \ar[r] & A\otimes \mathfrak{h}^\vee_Q \ar[r] &0.
}
\end{gather*}
If moreover there is an equality $\ver(N) = A\otimes Q$, we say that the two differential
calculi $\big(\Omega^1(A),\dd\big)$ and $\big(\Omega^1(H),\dd\big)$ are {\em concordant}.
This definition is motivated by the result in~\cite{Haj:not} that the non-universal
Atiyah sequence (see~\eqref{general.Atiyah} below) associated to a principal
$H$-comodule algebra~$(A,\delta)$ is short exact if and only if the
differential calculi on~$A$ and~$H$ are concordant.
Loosely speaking, this means that in the case of
concordant differential calculi one has an identification
between the vertical vector fields on~$A$ and
the quantum Lie algebra of the structure Hopf algebra, which is
in analogy to the fundamental vector field construction from
classical differential geometry.

It is well-known, see, e.g., \cite[Part~VII, Proposition~6.6]{BJM},
that a right $H$-comodule algebra $(A,\delta)$ is a Hopf--Galois extension
of the coinvariant subalgebra $B=A^{\coH}$ if and only if the {\em $($universal$)$ Atiyah sequence}
\begin{gather}\label{universal.Atiyah}
\xymatrix{
0 \ar[r] & A \Gamma^1(B) A \ar[r]& \Gamma^1(A) \ar[r]^-{\mathrm{ver}} & A\otimes H^{+} \ar[r] & 0
}
\end{gather}
of right $H$-comodule left $A$-modules is short exact. In particular,
for every principal $H$-comodule algebra
$(A,\delta)$, the sequence~\eqref{universal.Atiyah} is short exact.
The right $H$-comodule $A$-bimodule
\begin{gather*}
A\Gamma^1(B) A := \left\{\sum_j a_j b_j\otimes b^\prime_j a^\prime_j \colon
a_j,a^\prime_j\in A ,\, b_j,b^\prime_j \in B
\text{ with } \sum_j a_j b_j b^\prime_j a^\prime_j =0\right\}
\end{gather*}
is called the {\em module of universal horizontal $1$-forms}.
Let us also recall that the right $H$-coactions on both $A\Gamma^1(B) A$ and $\Gamma^1(A)$
are induced by the tensor product coaction on $A\otimes A$. As well,
the right $H$-coaction on $A\otimes H^+$ is the tensor product coaction
with $H^+$ endowed with the right adjoint $H$-coaction $\ad\colon h\mapsto h\sw 2\otimes S(h\sw 1) h\sw 3$.
\begin{defi}A {\em connection} with respect to the universal differential calculus
on a principal $H$-comodule algebra $(A,\delta)$ is a
right $H$-comodule left $A$-module splitting $\s\colon A\otimes H^+ \to \Gamma^1(A)$
of \eqref{universal.Atiyah}, i.e., $\ver\circ \s = \id$.
Every connection $\s$ is fully determined by its {\em connection form}
\begin{gather*}
\omega\colon \ H^+ \longrightarrow \Gamma^1(A), \qquad h\longmapsto \s(\1_A\otimes h) .
\end{gather*}
\end{defi}

Associated to a connection $\s\colon A\otimes H^+ \to \Gamma^1(A)$,
or equivalently to its connection form $\omega \colon H^+ \to \Gamma^1(A)$,
is its {\em covariant derivative}
\begin{gather*}
D := \left(\id - \s\circ \ver\right)\circ \du\colon \ A \longrightarrow A\Gamma^1(B) A,\qquad
a\longmapsto \du (a) - a\sw 0 \omega\big(a\sw 1- \epsilon(a\sw 1)\1_H\big) .
\end{gather*}
A connection is called {\em strong} provided that $D(A)\subseteq \Gamma^1(B)\,A$.
Strong connections are in one-to-one correspondence with maps $\ell\colon H\to A\otimes A$
satisfying the conditions~\eqref{str.con} in Definition~\ref{def:princ.com.alg},
see, e.g., \cite{DabGro:str} or~\cite{BrzHaj:Gal}. A covariant derivative $D$
of a strong connection is a connection on the left $B$-module $A$, in the sense of
\cite{CunQui:alg}. Every strong connection defines a connection, for the universal calculus, on the associated left $B$-module $E_A(V) = A\Box_H V$ via
\begin{gather*}
\nabla \colon \ E_A(V) \longrightarrow \Gamma^1(B)\otimes_B E_A(V) ,\qquad
a\otimes v \longmapsto D(a)\otimes v.
\end{gather*}

It was shown in \cite{Haj:not} that for
concordant differential calculi $\big(\Omega^1(A),\dd\big)$ and $\big(\Omega^1(H),\dd\big)$,
i.e., $\ver(N)=A\otimes Q$, exactness of the universal Atiyah sequence
\eqref{universal.Atiyah} implies exactness of the induced sequence
\begin{gather}\label{general.Atiyah}
 \xymatrix{
0 \ar[r] & \Omega^1(A)_{\mathrm{hor}} \ar[r] & \Omega^1(A) \ar[r]^-{\overline{\mathrm{ver}}} & A\otimes \mathfrak{h}^\vee_Q \ar[r] & 0}
\end{gather}
of right $H$-comodule left $A$-modules.
The {\em module of horizontal $1$-forms} is defined by
\begin{gather*}
 \Omega^1(A)_{\mathrm{hor}} :=
 \frac{A\Gamma^1(B)A}{A\Gamma^1(B)A \cap N} =
 A\Omega^1(B)A ,
\end{gather*}
where $\big(\Omega^1(B),\dd\big)$ is the differential calculus on $B$ that is
determined by the $B$-subbimodule $N_B := (\ker \mu_B)\cap N\subseteq \Gamma^1(B)$.
Equivalently, this differential calculus may by
obtained from $\big(\Omega^1(A),\dd\big)$ as a restriction, i.e.,
$\Omega^1(B) = B \dd(B)\subseteq \Omega^1(A)$, see, e.g.,
\cite[Chapter~5]{BegMaj:Rie}.
The short exact sequence~\eqref{general.Atiyah} will be referred to as the {\em Atiyah sequence}
for the concordant differential calculi $\big(\Omega^1(A),\dd\big)$ and
$\big(\Omega^1(H),\dd\big)$.
\begin{defi}\label{def:connectiongeneralcalculus}
A {\em connection} with respect to concordant differential calculi
$\big(\Omega^1(A),\dd\big)$ and $(\Omega^1(H),\dd)$ on a principal $H$-comodule algebra
$(A,\delta)$ is a right $H$-comodule left $A$-module splitting $\overline{\s}
\colon A\otimes \mathfrak{h}^\vee_Q \to \Omega^1(A)$ of the Atiyah sequence~\eqref{general.Atiyah}, i.e.,
$\overline{\ver}\circ \overline{\s} = \id$.
Every connection $\overline{\s}$ is fully determined by its {\em connection form}
\begin{gather*}
\overline{\omega} \colon \ \mathfrak{h}^\vee_Q \longrightarrow \Omega^1(A), \qquad h\longmapsto
\overline{\s}(\1_A\otimes h) .
\end{gather*}
\end{defi}

Similarly to the case of the universal calculus above,
associated to a connection $\overline{\s} \colon A\otimes \mathfrak{h}^\vee_Q \to \Omega^1(A)$,
or equivalently to its connection form $\overline{\omega} \colon \mathfrak{h}^\vee_Q \to \Omega^1(A)$,
is its {\em covariant derivative}
\begin{gather}\label{cov.derivgeneral}
\overline{D} := \left(\id - \overline{\s}\circ \overline{\ver}\right)\circ \dd\colon \ A \longrightarrow
\Omega^1_{\mathrm{hor}}(A), \qquad
a\longmapsto \dd (a) - a\sw 0 \overline{\omega}\big(a\sw 1- \epsilon(a\sw 1)\1_H \big) .
\end{gather}
A connection is said to be {\em strong} provided that $\overline{D}(A)\subseteq \Omega^1(B) A$.
Every strong connection defines a connection for the differential calculus $\big(\Omega^1(B),\dd\big)$
on the associated left $B$-module $E_A(V) = A\Box_H V$ via
\begin{gather}\label{eqn:Nablageneral}
\overline{\nabla}\colon \ E_A(V) \longrightarrow \Omega^1(B)\otimes_B E_A(V) , \qquad
a\otimes v \longmapsto \overline{D}(a)\otimes v .
\end{gather}

\subsection{The universal Atiyah sequence for Hopf fibrations}
Consider the $(K,H)$-bicomodule algebra $(A_\theta,\rho,\delta)$
from Section \ref{subsec:deformation} which describes the deformed Hopf fibration.
As the underlying right $H$-comodule algebra $(A_\theta,\delta)$
is a principal comodule algebra (cf.\ Proposition \ref{propo:HGnoncommutative}),
we obtain from \eqref{universal.Atiyah} the corresponding (universal) Atiyah sequence
\begin{gather}\label{eqn:universalAtiyah}
\xymatrix{
0 \ar[r] & A_\theta \Gamma^1(B) A_\theta \ar[r]& \Gamma^1(A_\theta) \ar[r]^-{\mathrm{ver}_\theta} & A_\theta\otimes H^{+} \ar[r] & 0.}
\end{gather}
Let us emphasize that $B= A_\theta^\coH\subseteq A_\theta$ is the algebra of functions on the classical $2$-sphere
(cf.\ Lemma~\ref{lem:deformedcoinvariants}) and that the vertical lift \eqref{lift}, in the present case, involves the $\star_\theta$-product of $A_\theta$, i.e.,
\begin{gather*}
\mathrm{ver}_\theta \colon \ \Gamma^1(A_\theta) \longrightarrow A_\theta \otimes H^{+}, \qquad
\sum_{j} a_j \otimes a^\prime_j \longmapsto \sum_{j} \big(a_j\star_\theta (a^\prime_j)\sw 0 \big)
\otimes (a^\prime_j)\sw1 .
\end{gather*}
Note that \eqref{eqn:universalAtiyah} is a short exact sequence of $(K,H)$-bicomodule left $A_\theta$-modules.
Setting the deformation parameter $\theta=0$, we obtain the (universal) Atiyah sequence
of the classical Hopf fibration $\bbS^3\to \bbS^2$
\begin{gather}\label{eqn:universalAtiyahclassical}
\xymatrix{0 \ar[r] & A \Gamma^1(B) A \ar[r]& \Gamma^1(A) \ar[r]^-{\mathrm{ver}} & A\otimes H^{+} \ar[r] & 0.}
\end{gather}
This is a short exact sequence of $(K,H)$-bicomodule left $A$-modules. Utilizing
the $2$-cocycle deformation functor from \cite[Proposition~2.4]{Brain2} or
\cite[Proposition~2.25(ii)]{Aschieri}, we obtain a short exact sequence of
$(K,H)$-bicomodule left $A_\theta$-modules
\begin{gather}\label{eqn:universalAtiyahdeformed}
\xymatrix{
0 \ar[r] & \big(A \,\Gamma^1(B)\, A\big)_\theta \ar[r]& \Gamma^1(A)^{~}_\theta \ar[r]^-{\mathrm{ver}} & \big(A\otimes H^{+}\big)_\theta \ar[r] & 0.
}
\end{gather}
As a sequence of $(K,H)$-bicomodules, \eqref{eqn:universalAtiyahdeformed}
coincides with \eqref{eqn:universalAtiyahclassical}. The left $A_\theta$-module structures
on the objects in \eqref{eqn:universalAtiyahdeformed} are obtained by the following construction:
Given any $(K,H)$-bicomodule left $A$-module
$M$, the deformed left $A_\theta$-action on the $(K,H)$-bicomodule left $A_\theta$-module $M_\theta$ is
\begin{gather*}
a\star_\theta m := \sigma_{\theta}(a\sw{-1}\otimes m\sw{-1}) a\sw 0 m\sw 0 ,
\end{gather*}
for all $a\in A_\theta $ and $m\in M_\theta$.
\begin{propo}\label{propo:universalAtiyahiso}
Consider the $(K,H)$-bicomodule isomorphism
\begin{gather}\label{eqn:varphimap}
\varphi_\theta\colon \ A_\theta\otimes A_\theta \longrightarrow (A\otimes A)_\theta, \qquad
a\otimes a^\prime \longmapsto \sigma_\theta(a\sw{-1}\otimes a^\prime \sw{-1}) a\sw 0\otimes a^\prime\sw 0 .
\end{gather}
Then
\begin{gather}\label{eqn:Atiyahmap}\begin{split}&
\xymatrix{
0 \ar[r] & \ar[d]_-{\varphi_\theta }A_\theta \Gamma^1(B) A_\theta \ar[r]& \ar[d]_-{\varphi_\theta} \Gamma^1(A_\theta) \ar[r]^-{\mathrm{ver}_\theta} & A_\theta\otimes H^{+} \ar[d]^-{\id}\ar[r] & 0\\
0 \ar[r] & \big(A \Gamma^1(B) A\big)_\theta \ar[r]& \Gamma^1(A)_\theta \ar[r]^-{\mathrm{ver}} & \big(A\otimes H^{+}\big)_\theta \ar[r] & 0
}\end{split}
\end{gather}
is an isomorphism of short exact sequences of $(K,H)$-bicomodule left $A_\theta$-modules.
\end{propo}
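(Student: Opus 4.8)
The plan is to verify the three columns of the diagram separately and then check that the squares commute, so that $\varphi_\theta$ restricts appropriately to a map of short exact sequences. First I would establish that $\varphi_\theta\colon A_\theta\otimes A_\theta\to (A\otimes A)_\theta$ is a $(K,H)$-bicomodule isomorphism: since the deformation functor $\Sigma_\theta$ from \cite[Proposition~2.4]{Brain2} or \cite[Proposition~2.25]{Aschieri} sends the module $A\otimes A$ to $(A\otimes A)_\theta$, and $A_\theta\otimes A_\theta$ carries the deformed bimodule structure, the map $\varphi_\theta$ is precisely the canonical comparison isomorphism. Its inverse is $a\otimes a^\prime\mapsto \sigma_\theta^{-1}(a\sw{-1}\otimes a^\prime\sw{-1})\,a\sw 0\otimes a^\prime\sw 0$, where $\sigma_\theta^{-1}$ is the convolution inverse; bijectivity and bicolinearity are then a direct check using unitality of $\sigma_\theta$ and the bicomodule axiom \eqref{eqn:KHcompatibility}. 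One also checks $\varphi_\theta$ is left $A_\theta$-linear for the respective $\star_\theta$-actions, which follows from the cocycle condition \eqref{eqn:cocyclecondition}.

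Next I would check that $\varphi_\theta$ maps the subobjects correctly. For the middle column one must show $\varphi_\theta\big(\Gamma^1(A_\theta)\big) = \Gamma^1(A)_\theta$; both are characterised as the kernel of the relevant multiplication map, and because $\mu_{A_\theta} = \mu_A\circ(\text{cocycle twist})$ on homogeneous components, $\varphi_\theta$ intertwines $\ker\mu_{A_\theta}$ with $(\ker\mu_A)_\theta = \Gamma^1(A)_\theta$; an element-level argument on the decomposition into homogeneous components makes this transparent. For the left column one must show $\varphi_\theta\big(A_\theta\Gamma^1(B)A_\theta\big) = \big(A\,\Gamma^1(B)\,A\big)_\theta$. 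Here I would use that $B = B_\theta$ is undeformed (Lemma~\ref{lem:deformedcoinvariants}), so that the only deformation is in the outer $A$-factors; writing a general element of $A_\theta\Gamma^1(B)A_\theta$ in homogeneous components and applying the twist formula \eqref{eqn:starproductdecomposition} shows it lands in $\big(A\,\Gamma^1(B)\,A\big)_\theta$, and the reverse inclusion follows from the same computation applied to $\varphi_\theta^{-1}$.

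It then remains to check that the two squares commute. The right-hand square requires $\mathrm{ver}\circ\varphi_\theta = \mathrm{ver}_\theta$ on $\Gamma^1(A_\theta)$. Unwinding the definitions, on $\sum_j a_j\otimes a_j^\prime$ the left-hand side is $\sum_j \big(\sigma_\theta(\cdots)\,a_j\sw 0 (a_j^\prime)\sw{0,0}\big)\otimes (a_j^\prime)\sw{0,1}$ with the $K$-coaction labels in $\sigma_\theta$, while the right-hand side is $\sum_j\big(a_j\star_\theta (a_j^\prime)\sw 0\big)\otimes (a_j^\prime)\sw 1$; these agree because $\star_\theta$ is by definition the cocycle twist of $\mu_A$ along the $K$-coaction, and the $K$- and $H$-coactions commute by \eqref{eqn:KHcompatibility}, so the $H$-component is unaffected by the twist. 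The left-hand square commutes automatically since both vertical maps are restrictions of $\varphi_\theta$ and the horizontal maps are inclusions. Finally, the five lemma (or simply the fact that the outer vertical map is $\id$ and $\varphi_\theta$ is already an isomorphism) gives that the whole diagram is an isomorphism of short exact sequences. I expect the main obstacle to be the bookkeeping in identifying $\varphi_\theta\big(A_\theta\Gamma^1(B)A_\theta\big)$ with $\big(A\,\Gamma^1(B)\,A\big)_\theta$, since one must carefully track both coactions through the twist and use that $B$ is pointwise invariant under the $\star_\theta$-product; everything else is a routine, if slightly tedious, application of the cocycle identities.
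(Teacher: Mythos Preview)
Your proposal is correct and follows essentially the same route as the paper: verify that $\varphi_\theta$ is a $(K,H)$-bicomodule left $A_\theta$-module isomorphism, use $\mu_{A_\theta}=\mu_A\circ\varphi_\theta$ to identify $\Gamma^1(A_\theta)$ with $\Gamma^1(A)_\theta$, and check the right square by the bicomodule compatibility~\eqref{eqn:KHcompatibility}.

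The one place where the paper is sharper is the left column, which you flag as the main obstacle. Rather than decomposing into homogeneous components, the paper observes directly from~\eqref{eqn:BcoactionK} that $\varphi_\theta$ restricts to the \emph{identity} on $\Gamma^1(B)\subseteq B\otimes B$: the $K$-coaction on $B$ lands in the span of $t_{(m,-m)}$, and $\sigma_\theta\big(t_{(m,-m)}\otimes t_{(m',-m')}\big)=1$ since $\Theta$ is antisymmetric. Combined with $A_\theta$-linearity this immediately gives the claimed domain and codomain for the left vertical arrow, with no bookkeeping. Alternatively, as you note at the end, once the right square commutes and the middle arrow is an isomorphism, the left column is forced by taking kernels, so the five-lemma remark already suffices and the anticipated difficulty evaporates.
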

\begin{proof}Note that \eqref{eqn:varphimap} relates the deformed and undeformed
product via $\mu_{A_\theta} = \mu_A \circ \varphi_\theta$. As a consequence,
it restricts to the middle vertical arrow in~\eqref{eqn:Atiyahmap}.
The $2$-cocycle property of~$\sigma_\theta$ implies that the middle vertical arrow
is a $(K,H)$-bicomodule left $A_\theta$-module isomorphism. From~\eqref{eqn:BcoactionK}
we find that $\varphi_\theta$ acts as the identity on
$\Gamma^1(B)\subseteq B\otimes B \subseteq A_\theta\otimes A_\theta$. Together with the previous result,
this implies that the left vertical arrow in~\eqref{eqn:Atiyahmap} has the claimed domain
and codomain and that it is a $(K,H)$-bicomodule left $A_\theta$-module isomorphism.
The left square commutes by construction.

The right vertical arrow in \eqref{eqn:Atiyahmap} is a $(K,H)$-bicomodule left $A_\theta$-module isomorphism
because the $K$-coaction on $H^+$ is trivial. We see directly that the right square commutes by
\begin{align*}
\nn \mathrm{ver}\circ \varphi_\theta \bigg(\sum_j a_j\otimes a^\prime_j\bigg)
&=\sum_j \sigma_\theta\big((a_j)\sw{-1}\otimes (a^\prime_j)\sw{-1}\big) (a_{j})\sw 0 (a^\prime_j)\sw{0}_{\sw{0}}
\otimes (a^\prime_j)\sw{0}_{\sw{1}}\\
\nn &=\sum_j \sigma_\theta\big((a_j)\sw{-1}\otimes (a^\prime_j)\sw{0}_{\sw{-1}} \big) (a_{j})\sw 0
 (a^\prime_j)\sw{0}_{\sw{0}}\otimes (a^\prime_j)\sw{1}\\
&=\sum_j a_{j} \star_\theta (a^\prime_j)\sw{0}\otimes (a^\prime_j)\sw{1} = \mathrm{ver}_\theta\bigg(\sum_j a_j\otimes a^\prime_j\bigg) ,
\end{align*}
for all $\sum_j a_j\otimes a^\prime_j \in \Gamma^1(A_\theta)$, where in
the second step we used~\eqref{eqn:KHcompatibility}.
\end{proof}

\subsection{The Atiyah sequence for K\"ahler forms}
For a {\em commutative} algebra $A$, the product map
$\mu_A\colon A\otimes A\to A$ is an algebra homomorphism
when $A\otimes A$ is endowed with the tensor algebra structure
$(a\otimes a^\prime) (\widetilde{a}\otimes \widetilde{a}^\prime) :=
(a \widetilde{a})\otimes(a^\prime \widetilde{a}^\prime)$.
This implies that $\ker \mu_A\subseteq A\otimes A$ is an ideal.
Recall that the module of {\em K\"ahler $1$-forms} on $A$
is defined as the quotient $A$-bimodule
\begin{gather*}
\Omega^1(A) := \frac{\ker\mu_A}{(\ker\mu_A)^2} = \frac{\Gamma^1(A)}{(\ker\mu_A)^2} .
\end{gather*}
The K\"ahler differential $\dd\colon A\to \Omega^1(A)$
is the composition of the universal differential
$\du\colon A\to\Gamma^1(A)$ and the quotient map
$\Gamma^1(A) \twoheadrightarrow \Omega^1(A)$. In other words, $\big(\Omega^1(A),\dd\big)$ is
the first-order differential calculus presented by the quotient
of the universal calculus $\big(\Gamma^1(A),\du\big)$ by the $A$-subbimodule
$N := (\ker \mu_A)^2 \subseteq \Gamma^1(A)$.

For a {\em commutative} Hopf algebra $H$, the K\"ahler differential
calculus on $H$ is bicovariant and it corresponds to the $\ad$-invariant right
ideal $Q = (H^+)^2$, where $H^+ = \ker \epsilon$.
Given further a~{\em commutative} principal $H$-comodule algebra
$(A,\delta)$, with coinvariants $B=A^{\coH}$, both
$\Gamma^1(A) = \ker\mu_A\subseteq A\otimes A$ and $A\otimes H^+\subseteq A\otimes H$
are ideals with respect to the tensor algebra structures. It is easily checked
that the vertical lift $\ver\colon \Gamma^1(A)\to A\otimes H^+$
is an algebra homomorphism, hence it maps $(\ker\mu_A)^2$ to
$(A\otimes H^+)^2 = A\otimes (H^+)^2$.
Because the latter map is surjective, the K\"ahler differential calculi
on~$A$ and~$H$ are concordant.
Hence, there is a corresponding short exact Atiyah sequence~\eqref{general.Atiyah}
for K\"ahler forms in which
\begin{gather*}
\Omega^1_\mathrm{hor}(A) := \frac{ A \Gamma^1(B) A}{\big(A \Gamma^1(B) A\big) \cap (\ker\mu_A)^2} = A\Omega^1(B)A ,\qquad \mathfrak{h}^{\vee} := \mathfrak{h}^{\vee}_Q = \frac{H^+}{ (H^+)^2} ,
\end{gather*}
and $\Omega^1(B) = \Gamma^1(B)/N_B$ is determined by $N_B = (\ker \mu_A)^2\cap \Gamma^1(B) \subseteq \Gamma^1(B)$.

It might be worth pointing out that, in general,
$\Omega^1(B)$ defined above by the restriction of the K\"ahler $1$-forms
on $A$ is not necessarily the module of K\"ahler $1$-forms on $B$.
\begin{ex}\label{ex.non.Ka}
Let $k$ be a field of characteristic $p$ and $A = k\big[x,x^{-1}\big]$ the commutative
algebra of Laurent polynomials in $x$. Further, let $H=k\bbZ_p$ be the group Hopf algebra
of the cyclic group of order $p$. The latter may be presented as the commutative
algebra $H = k[\xi]/\big(\1-\xi^p\big)$,
equipped with the coproduct $\Delta(\xi) = \xi \otimes \xi$,
counit $\epsilon(\xi)=1$ and antipode $S(\xi) = \xi^{-1}$. (By $\xi^{-1}\in H$
we mean the element represented by $\xi^{p-1}\in k[\xi]$.)
We endow $A$ with the structure of a right $H$-comodule algebra
by defining $\delta(x) = x\otimes \xi$ and $\delta\big(x^{-1}\big) = x^{-1}\otimes \xi^{-1}$.
The $H$-coinvariant subalgebra
$B = k\big[x^p,x^{-p}\big]\subseteq k\big[x,x^{-1}\big]=A$ is the algebra of
Laurent polynomials in~$x^p$.
We further observe that
$(A,\delta)$ is a principal comodule algebra with strong connection
$\ell\colon H\to A\otimes A$ defined by
\begin{gather*}
\ell(\xi^n) = x^{-n}\otimes x^n ,\qquad\text{for} \quad n=0,\dots,p-1 .
\end{gather*}
Because $A = k\big[x,x^{-1}\big] \cong k[x,y]/(xy-\1)$ admits a finite presentation
by two generators $x$ and $y$, the corresponding module of K\"ahler $1$-forms
is isomorphic to the quotient $\Omega^1(A) \cong (A\,\dd x \oplus A\,\dd y) /\langle y\,\dd x + x\,\dd y\rangle$,
see, e.g., \cite[Section~16.1]{Eis:com}. The generator $\dd y$ can be eliminated
by the relation $\dd y = -y^{2} \dd x$, which implies that $\Omega^1(A) = A\,\dd x$. (Note
that $\dd x^{-1} = -x^{-2} \dd x$ in this calculus.)
Restricting the K\"ahler $1$-forms $\Omega^1(A)$ to $B= k\big[x^p,x^{-p}\big]\subseteq k\big[x,x^{-1}\big] = A$
defines the trivial differential calculus $\Omega^1(B) = 0$. This follows as $k$ is
a field of characteristic~$p$, by hypothesis, and hence $\dd (x^p) = p\, x^{p-1}\,\dd x =0$. On the other hand, the K\"ahler differential calculus on $B$ is non-trivial.\footnote{We
are grateful to Christian Lomp for his comment, which led us to Example~\ref{ex.non.Ka}.}
\end{ex}

In view of the above comment and example, it is useful to observe the following lemma.
\begin{lem}\label{lem.Ka}
Let $A=k[x_1,\dots,x_n]/J_A$ be a finitely generated algebra
and $B\subseteq A$ the subalgebra generated by $X_1,\dots,X_m \in k[x_1,\dots,x_n]$.
If the set $\{ \dd_x (X_1) ,\dots, \dd_x (X_m)\}$ is free in the module underlying
the K\"ahler differential calculus $\big(\Omega^1(k[x_1,\dots,x_n]),\dd_x\big)$ on the algebra $k[x_1,\dots,x_n]$, then the K\"ahler differential calculus $\big(\Omega^1(B),\dd_B\big)$ on $B$ is isomorphic to the restriction $(B\dd_A(B),\dd_A) \subseteq \big(\Omega^1(A),\dd_A\big)$ to $B\subseteq A$
of the K\"ahler differential calculus on $A$.
\end{lem}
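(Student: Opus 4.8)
The plan is to compare two presentations of differential calculi on $B$ by lifting everything to the free algebra $P := k[x_1,\dots,x_n]$ and tracking the relevant submodules of the module of K\"ahler $1$-forms $\Omega^1(P) = \bigoplus_{i=1}^n P\,\dd_x(x_i)$, which is free of rank $n$. Write $\pi\colon P \twoheadrightarrow A$ for the quotient map, so $A = P/J_A$ and, by the standard presentation of K\"ahler forms for finitely generated algebras (see, e.g., \cite[Section~16.1]{Eis:com}), $\Omega^1(A) \cong \big(\bigoplus_{i=1}^n A\,\dd_A(x_i)\big)/\langle \dd_x(f) : f\in J_A\rangle$, where $\dd_x(f)$ is read modulo $J_A$. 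Similarly, since $B$ is generated by $X_1,\dots,X_m$, there is a polynomial algebra $P' := k[y_1,\dots,y_m]$ with a surjection $P' \twoheadrightarrow B$, $y_a \mapsto X_a$, whose kernel $J_B$ gives $\Omega^1(B) \cong \big(\bigoplus_{a=1}^m B\,\dd_B(X_a)\big)/\langle \dd_y(g) : g\in J_B\rangle$.

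First I would set up the natural comparison map. The inclusion $B \hookrightarrow A$ induces a $B$-linear map $\Omega^1(B) \to \Omega^1(A)$ sending $\dd_B(X_a) \mapsto \dd_A(X_a)$, whose image is exactly the restricted calculus $B\,\dd_A(B)$; this is surjective onto $B\,\dd_A(B)$ by construction. The content of the lemma is that this map is also injective, i.e., that the only relations among the $\dd_A(X_a)$ over $B$ inside $\Omega^1(A)$ are those already forced by $J_B$. Second, I would use the freeness hypothesis: in $\Omega^1(P)$ the elements $\dd_x(X_1),\dots,\dd_x(X_m)$ span a free rank-$m$ direct summand, so there is a $P$-linear projection $\Omega^1(P) \to \bigoplus_{a=1}^m P\,\dd_x(X_a)$, hence a splitting identifying the $P$-submodule generated by the $\dd_x(X_a)$ with $P^m$. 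Restricting scalars along $P' \to P$ (via $y_a \mapsto X_a$), the chain rule gives $\dd_x(g(X_1,\dots,X_m)) = \sum_a (\partial g/\partial y_a)(X)\,\dd_x(X_a)$ for $g\in P'$; by freeness this vanishes in $\Omega^1(P)$ iff all $(\partial g/\partial y_a)(X) = 0$ in $P$.

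The key step — and the main obstacle — is to show that a $B$-linear relation $\sum_a b_a\,\dd_A(X_a) = 0$ in $\Omega^1(A)$ pulls back to a relation that is accounted for by $J_B$. Lifting the $b_a$ to $P$ and using the presentation of $\Omega^1(A)$, such a relation means $\sum_a b_a\,\dd_x(X_a) \in \langle \dd_x(f) : f\in J_A\rangle + (\text{terms in } J_A\cdot\Omega^1(P))$ inside $\Omega^1(P)$; applying the projection onto the free summand $\bigoplus_a P\,\dd_x(X_a)$ and reducing modulo $J_A$ (i.e., working over $A$), one must argue that the resulting relation descends to a relation over $B$ among the $X_a$, i.e., lies in the image of $\langle \dd_y(g) : g\in J_B\rangle$. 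Here I would exploit that $J_B = \ker(P' \to B)$ and that $P' \to P \to A$ factors through $B \hookrightarrow A$, so that any element of $P'$ mapping to $0$ in $A$ already maps to $0$ in $B$; combined with the freeness-induced identification, this forces the coefficient relation to come from $\partial g/\partial y_a$ with $g\in J_B$. Finally I would assemble these observations: the comparison map $\Omega^1(B) \to B\,\dd_A(B)$ is a surjective $B$-module map that is injective by the relation-chasing argument, and it intertwines $\dd_B$ with $\dd_A$ by construction, hence is an isomorphism of differential calculi. I expect the bookkeeping around "$J_A\cdot\Omega^1(P)$ versus $\langle\dd_x(f)\rangle$" to be the only genuinely delicate point, since one must be careful that reducing modulo $J_A$ does not create spurious relations not visible over the free algebra; the freeness hypothesis is precisely what rules this out.
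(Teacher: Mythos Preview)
Your overall architecture is close to the paper's: both arguments lift to the free polynomial ring $P=k[x_1,\dots,x_n]$, use the presentation of K\"ahler forms from \cite[Section~16.1]{Eis:com}, and compare with the analogous presentation of $\Omega^1(B)$ coming from $P'=k[y_1,\dots,y_m]\twoheadrightarrow B$. The paper also singles out $J_B:=J_A\cap k[X_1,\dots,X_m]$ as the relevant ideal, exactly as you do.

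There is, however, a genuine gap in your key step. You assert that the free family $\{\dd_x(X_1),\dots,\dd_x(X_m)\}$ spans a \emph{direct summand} of $\Omega^1(P)$, and you use the resulting $P$-linear projection to strip off the unwanted part of a relation in $(J_A,\dd_x J_A)$. Freeness over $P$ does not give a direct summand: already for $m=n=1$ with $X_1=x_1^2$, the element $\dd_x(X_1)=2x_1\,\dd_x(x_1)$ is free in $\Omega^1(P)=P\,\dd_x(x_1)$ but generates $x_1 P\,\dd_x(x_1)$, which is not complemented. So the projection you invoke need not exist, and your relation-chasing argument, as written, does not go through. You flag the ``$J_A\cdot\Omega^1(P)$ versus $\langle\dd_x(f)\rangle$'' bookkeeping as delicate, and indeed without a splitting there is no clean way to separate the contribution of $\dd_x(J_A)$ from that of $J_A\cdot\Omega^1(P)$ after projecting.

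The paper circumvents this entirely by running the comparison in the opposite direction. Freeness of $\{\dd_x(X_a)\}$ is used only to conclude that the $k[X_1,\dots,X_m]$-linear map
\[
\Omega^1(k[X_1,\dots,X_m])\longrightarrow \Omega^1(P),\qquad \sum_a f_a\,\dd_X(X_a)\longmapsto \sum_a f_a\,\dd_x(X_a),
\]
is \emph{injective}, hence an isomorphism onto the submodule $k[X_1,\dots,X_m]\,\dd_x(k[X_1,\dots,X_m])\subseteq\Omega^1(P)$; no retraction is needed. One then writes the two short exact sequences
\[
0\to(J_B,\dd_X J_B)\to\Omega^1(k[X_1,\dots,X_m])\to\Omega^1(B)\to 0
\]
and
\[
0\to(J_B,\dd_x J_B)\to k[X_1,\dots,X_m]\,\dd_x\big(k[X_1,\dots,X_m]\big)\to B\,\dd_A(B)\to 0,
\]
with the first two vertical arrows isomorphisms by the injectivity just established; the induced map on cokernels is then the desired isomorphism $\Omega^1(B)\cong B\,\dd_A(B)$. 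This packages your intended relation chase into a single diagram argument while avoiding the unjustified splitting.
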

\begin{proof}
Recall from \cite[Proposition~16.1]{Eis:com} that $\Omega^1(k[x_1,\dots, x_n])$
and $\Omega^1(k[X_1,\dots, X_m])$ are free modules. By our hypothesis
that $\{ \dd_x (X_1) ,\dots, \dd_x (X_m)\}$ is free in $\Omega^1(k[x_1,\dots,x_n])$,
it follows that the map
\begin{subequations}\label{iso.Ka.pol}
\begin{gather}
\Omega^1(k[X_1,\ldots, X_m]) \longrightarrow \Omega^1(k[x_1,\ldots, x_n]) , \qquad \sum_j f_j\,\dd_X(X_j)
\longmapsto\sum_j f_j \dd_x(X_j)
\end{gather}
is injective and hence it defines an isomorphism
\begin{gather}
\Omega^1(k[X_1,\dots, X_m]) \cong k[X_1,\dots, X_m] \dd_x\big(k[X_1,\dots, X_m]\big) .
\end{gather}
\end{subequations}
As $\Omega^1(A)$ is given by the quotient of $\Omega^1(k[x_1,\dots, x_n])$
by the ideal $(J_A,\dd_x J_A)$ generated by $J_A$ and $\dd_x J_A$,
and likewise $\Omega^1(B) = \Omega^1(k[X_1,\dots, X_m])/(J_B,\dd_XJ_B)$ with
$J_B := J_A\cap k[X_1,\dots,X_m]$, see, e.g., \cite[Section~16.1]{Eis:com},
the required isomorphism is induced by the following diagram with exact rows
\begin{gather*}
\xymatrix@C=1.5em{
0 \ar[r] & (J_B,\dd_XJ_B)\ar[d]_-{\cong} \ar[r] & \Omega^1(k[X_1,\ldots, X_m]) \ar[r]\ar[d]_-{\cong} & \Omega^1(B)\ar[r]\ar@{-->}[d]^-{\cong} & 0 \\
0 \ar[r] & (J_B,\dd_xJ_B) \ar[r] & k[X_1,\ldots, X_m] \dd_x\big(k[X_1,\ldots, X_m]\big) \ar[r] & B\,\dd_A(B) \ar[r] & 0 ,
}
\end{gather*}
where the solid vertical arrows arise from \eqref{iso.Ka.pol}.
\end{proof}

Let us consider now the Atiyah sequence~\eqref{general.Atiyah}
for the {\em classical} Hopf fibration $\bbS^3\to\bbS^2$
and the K\"ahler differential calculi on
$A= \mathcal{O}\big(\bbS^3\big)$ and $H=\mathcal{O}(U(1))$, i.e.,
\begin{gather}\label{eqn:KaehlerAtiyahclassical}
\xymatrix{
0\ar[r]&\Omega^1_\mathrm{hor}(A) \ar[r]& \Omega^1(A) \ar[r]^-{\overline{\mathrm{ver}}} & A\otimes \mathfrak{h}^\vee \ar[r] & 0.
}
\end{gather}
The vector space $\mathfrak{h}^{\vee} = H^+/ (H^+ )^2$ is the algebraic
cotangent space of $U(1)$ at the unit element. Its dual is the vector space underlying
the Lie algebra of $U(1)$, i.e., the vector space $\mathfrak{h} :=\Der_{\epsilon}(H)$ of derivations
relative to $\epsilon\colon H\to\bbC$.
Recall that $X\in \Der_{\epsilon}(H)$ is a linear map $X\colon H\to \bbC$
such that $X(h\,h^\prime) = X(h) \epsilon(h^\prime) + \epsilon(h) X(h^\prime)$,
for all $h,h^\prime\in H$.
It is easily checked that $\mathfrak{h} = \Der_{\epsilon}(H) \cong \bbC$
is one-dimensional. Furthermore, since the generators $z = 2z_1z_2^\ast$,
$z^\ast = 2z_1^\ast z_2$ and $x = z_1^\ast z_1 - z_2^\ast z_2$
of $B = A^{\coH} \cong \OO\big(\bbS^2\big)$ (cf.\ Lemma~\ref{lem:2sphere}) satisfy
the conditions of Lemma~\ref{lem.Ka}, the horizontal forms in~\eqref{eqn:KaehlerAtiyahclassical} are
\begin{gather}\label{hor.Hopf}
\Omega^1_\mathrm{hor}(A) = A\Omega^1(B)A = A\Omega^1(B) = \Omega^1(B)A ,\qquad\text{with}\quad
\Omega^1(B) = \frac{\ker\mu_B}{ (\ker\mu_B )^2} .
\end{gather}
As $(A,\delta,\rho)$ is a $(K,H)$-bicomodule algebra, it follows that~\eqref{eqn:KaehlerAtiyahclassical} is a short exact sequence of $(K,H)$-bicomodule left $A$-modules. Applying the $2$-cocycle
deformation functor from \cite[Proposition~2.4]{Brain2} or
\cite[Proposition~2.25(ii)]{Aschieri}, we obtain
the short exact sequence of $(K,H)$-bicomodule left $A_\theta$-modules
\begin{gather}\label{eqn:KaehlerAtiyahdeformed}
\xymatrix{
0\ar[r]&\Omega^1_\mathrm{hor}(A)_\theta \ar[r]& \Omega^1(A)_\theta \ar[r]^-{\overline{\mathrm{ver}}}
& \big( A\otimes \mathfrak{h}^\vee\big)_\theta \ar[r] & 0.}
\end{gather}

A similar construction applies to the deformed Hopf fibration
described by the $(K,H)$-bicomodule algebra $(A_\theta,\rho,\delta)$ from
Section~\ref{subsec:deformation}. The key point
is that $A_\theta$ is {\em braided commutative} (see, e.g.,~\cite{Barnes}) in the sense of
\begin{subequations}\label{eqn:Rmatrix}
\begin{gather}
a\star_\theta a^\prime = R_\theta(a^\prime \sw{-1}\otimes a \sw{-1})~a^\prime\sw{0} \star_\theta a\sw{0} ,
\end{gather}
for all $a,a^\prime\in A_\theta$, with cotriangular structure
$R_\theta : K\otimes K\to\bbC$ given by
\begin{gather}
R_\theta(t_{\mathbf{m}}\otimes t_{\mathbf{m}^\prime}) = \sigma_\theta(t_{\mathbf{m}}\otimes t_{\mathbf{m}^\prime})^{-2} = \exp\big({-}2\pi {\rm i} \mathbf{m}^\mathrm{T}\Theta \mathbf{m}^\prime\big) .
\end{gather}
\end{subequations}
The product map $\mu_{A_\theta} \colon A_\theta \otimes A_\theta \to A_{\theta}$
is an algebra homomorphism when $A_\theta\otimes A_\theta$ is endowed
with the {\em braided tensor algebra} structure $(a\otimes a^\prime) (\widetilde{a}\otimes \widetilde{a}^\prime) :=
R_\theta(\widetilde{a}\sw{-1}\otimes a^\prime\sw{-1})
(a\star_\theta \widetilde{a}\sw{0})\otimes(a^\prime\sw{0}\star_\theta\widetilde{a}^\prime)$.
Consequently, $\ker\mu_{A_\theta}\subseteq A_\theta\otimes A_\theta$
is an ideal and the {\em deformed K\"ahler forms} may be defined as
\begin{gather*}
\Omega^1(A_\theta) := \frac{\ker \mu_{A_\theta}}{(\ker\mu_{A_\theta})^2} .
\end{gather*}
The vertical lift in \eqref{eqn:universalAtiyah} is an algebra homomorphism with respect
to the braided tensor algebra structures. Hence, it maps
$(\ker\mu_{A_\theta})^2$ to $(A_\theta\otimes H^+)^2 = A_\theta\otimes (H^+)^2$.
In analogy to~\eqref{eqn:KaehlerAtiyahclassical}, we obtain the quotient
short exact sequence
\begin{gather}\label{eqn:KaehlerAtiyah}
\xymatrix{
0\ar[r]&\Omega^1_\mathrm{hor}(A_\theta) \ar[r]& \Omega^1(A_\theta) \ar[r]^-{\overline{\mathrm{ver}}_\theta}
& A_\theta \otimes \mathfrak{h}^\vee \ar[r] & 0.
}
\end{gather}
\begin{propo}\label{propo:KaehlerAtiyahiso}
The isomorphism $\varphi_\theta\colon A_\theta\otimes A_\theta \to (A\otimes A)_\theta$
given in \eqref{eqn:varphimap} descends to the
$(K,H)$-bicomodule left $A_\theta$-module isomorphism
\begin{gather}\label{eqn:barvarphimap}
\overline{\varphi}_\theta\colon \ \Omega^1(A_\theta) \longrightarrow\Omega^1(A)_\theta , \qquad
a \dd_\theta a^\prime \longmapsto a\star_\theta\dd a^\prime .
\end{gather}
This defines an isomorphism of short exact sequences
\begin{gather}\label{eqn:KaehlerAtiyahiso}\begin{split}&
\xymatrix{
0 \ar[r] & \ar[d]_-{\overline{\varphi}_\theta }\Omega^1_{\mathrm{hor}}(A_\theta) \ar[r]& \ar[d]_-{\overline{\varphi}_\theta} \Omega^1(A_\theta) \ar[r]^-{\overline{\mathrm{ver}}_\theta} & A_\theta\otimes \mathfrak{h}^{\vee} \ar[d]^-{\id}\ar[r] & 0\\
0 \ar[r] & \Omega^1_{\mathrm{hor}}(A)_\theta \ar[r]& \Omega^1(A)_\theta \ar[r]^-{\overline{\mathrm{ver}}} & \big(A\otimes \mathfrak{h}^{\vee}\big)_\theta \ar[r] & 0
}\end{split}
\end{gather}
between \eqref{eqn:KaehlerAtiyah} and \eqref{eqn:KaehlerAtiyahdeformed}.
\end{propo}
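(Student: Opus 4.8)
\textit{Plan.}
The strategy is to deduce this from Proposition~\ref{propo:universalAtiyahiso}, which already provides the $(K,H)$-bicomodule left $A_\theta$-module isomorphism $\varphi_\theta\colon A_\theta\otimes A_\theta\to(A\otimes A)_\theta$ between the universal Atiyah sequences, by checking that $\varphi_\theta$ is compatible with passing to the K\"ahler quotients. The key (and only non-formal) point is that $\varphi_\theta$ is an \emph{algebra} isomorphism when the domain carries the braided tensor algebra structure used to define $\Omega^1(A_\theta)$ and the codomain carries the $2$-cocycle deformation of the commutative tensor algebra structure on $A\otimes A$; granting this, $\varphi_\theta$ maps squares of ideals to squares of ideals, which is precisely what is needed. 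To prove this I would use the decomposition of $A$ into homogeneous $K$-weight components $A^{(\mathbf m,n)}$ from~\eqref{eqn:Adecomposition}: on a homogeneous tensor, $\varphi_\theta$ is multiplication by a nonzero scalar $\sigma_\theta$, the braided product on $A_\theta\otimes A_\theta$ differs from the undeformed tensor product by a factor involving $R_\theta=\sigma_\theta^{-2}$ together with two factors of $\sigma_\theta$ coming from the two $\star_\theta$-products, and the deformed tensor product on $(A\otimes A)_\theta$ differs from the undeformed one by a single $\sigma_\theta$ evaluated on the total $K$-weights. Matching these scalars is a short bookkeeping exercise which, in coordinate-free form, reduces to the cocycle condition~\eqref{eqn:cocyclecondition} together with the identity $R_\theta=\sigma_\theta^{-2}$.

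Since $\mu_{A_\theta}=\mu_A\circ\varphi_\theta$ (recorded in the proof of Proposition~\ref{propo:universalAtiyahiso}), this algebra isomorphism sends $\ker\mu_{A_\theta}$ onto $\ker\mu_A$ and hence sends $(\ker\mu_{A_\theta})^2$, the square in the braided tensor algebra, onto the square of $\ker\mu_A$ in the deformed tensor algebra. Because $\ker\mu_A$ is a $K$-subcomodule of $A\otimes A$ it splits into weight spaces, on each pair of which the deformed product agrees with the commutative one up to a nonzero scalar; summing over all pairs shows that this square coincides, as a subspace of $A\otimes A$, with the ordinary $(\ker\mu_A)^2$. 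Thus $\varphi_\theta$ identifies the $A_\theta$-subbimodule $(\ker\mu_{A_\theta})^2\subseteq\Gamma^1(A_\theta)$ with the subbimodule $\big((\ker\mu_A)^2\big)_\theta\subseteq\Gamma^1(A)_\theta$ defining $\Omega^1(A)_\theta$, and therefore descends to a $(K,H)$-bicomodule left $A_\theta$-module isomorphism $\overline{\varphi}_\theta\colon\Omega^1(A_\theta)\to\Omega^1(A)_\theta$. Evaluating on the element $a\otimes a'-(a\star_\theta a')\otimes\1$ of $\Gamma^1(A_\theta)$, which represents $a\,\dd_\theta a'$, and using $\sigma_\theta(-\otimes\1)=\epsilon$, yields exactly the stated formula $a\,\dd_\theta a'\mapsto a\star_\theta\dd a'$.

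It remains to verify the isomorphism of short exact sequences~\eqref{eqn:KaehlerAtiyahiso}. By Proposition~\ref{propo:universalAtiyahiso}, $\varphi_\theta$ carries $A_\theta\Gamma^1(B)A_\theta$ onto $\big(A\Gamma^1(B)A\big)_\theta$, and by the previous paragraph it also matches the respective K\"ahler subbimodules; hence it matches their intersections and $\overline{\varphi}_\theta$ restricts to an isomorphism $\Omega^1_{\mathrm{hor}}(A_\theta)\to\Omega^1_{\mathrm{hor}}(A)_\theta$, so the left square commutes. The right square is the K\"ahler quotient of the right square in~\eqref{eqn:Atiyahmap}: since $\ver_\theta$ and $\ver$ map $(\ker\mu_{A_\theta})^2$ and $(\ker\mu_A)^2$ into $A_\theta\otimes(H^+)^2$ and $A\otimes(H^+)^2$ respectively, the maps $\overline{\ver}_\theta$ and $\overline{\ver}$ are the induced quotient maps, and $\overline{\ver}\circ\overline{\varphi}_\theta=\overline{\ver}_\theta$ follows from $\ver\circ\varphi_\theta=\ver_\theta$. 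The right vertical arrow is the identity, an isomorphism because the $K$-coaction on $\mathfrak h^\vee=H^+/(H^+)^2$ is trivial, so all three vertical arrows are $(K,H)$-bicomodule left $A_\theta$-module isomorphisms.

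I expect the main obstacle to be the algebra-isomorphism step of the first paragraph: confirming that $\varphi_\theta$ really does intertwine the braided tensor algebra structure on $A_\theta\otimes A_\theta$ with the deformed tensor algebra structure on $(A\otimes A)_\theta$. This is the place where the specific cotriangular structure $R_\theta=\sigma_\theta^{-2}$ has to be matched against the cocycle condition; once it is in place, the remainder is a routine matter of transporting quotients, intersections and splittings along an isomorphism that is already available from Proposition~\ref{propo:universalAtiyahiso}.
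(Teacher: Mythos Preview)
Your proposal is correct and follows essentially the same route as the paper: both arguments hinge on recognizing that $\varphi_\theta$ is an algebra isomorphism from the braided tensor algebra $A_\theta\otimes A_\theta$ to the $2$-cocycle deformation $(A\otimes A)_\theta$ of the ordinary tensor algebra, so that it carries $(\ker\mu_{A_\theta})^2$ onto $(\ker\mu_A)^2_\theta$ and hence descends to the K\"ahler quotients, after which the isomorphism of Atiyah sequences is inherited from Proposition~\ref{propo:universalAtiyahiso}. The paper establishes the algebra-isomorphism step by invoking the $2$-cocycle identity~\eqref{eqn:cocyclecondition} directly, whereas you propose to verify it on homogeneous $K$-weight tensors and then observe that the scalar bookkeeping collapses to the same cocycle identity together with $R_\theta=\sigma_\theta^{-2}$; these are the same computation. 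One small point worth noting: your explicit observation that the square of $\ker\mu_A$ taken in the \emph{deformed} tensor algebra coincides as a subspace with the ordinary $(\ker\mu_A)^2$ (because the deformed product differs from the undeformed one by nonzero scalars on homogeneous pieces) is a useful clarification that the paper leaves implicit in its notation $(\ker\mu_A)^2_\theta$.
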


\begin{proof}
Using the $2$-cocycle property \eqref{eqn:cocyclecondition} of $\sigma_\theta$,
it can be shown that the
map $\varphi_\theta\colon A_\theta\otimes A_\theta \to (A\otimes A)_\theta$
given in~\eqref{eqn:varphimap} is an algebra isomorphism with respect
to the following algebra structures: As above, the domain $A_\theta\otimes A_\theta$
is endowed with the braided tensor algebra structure, i.e.,
\begin{gather*}
(a\otimes a^\prime) (\widetilde{a}\otimes \widetilde{a}^\prime) :=
R_\theta(\widetilde{a}\sw{-1}\otimes a^\prime\sw{-1})
(a\star_\theta \widetilde{a}\sw 0)\otimes(a^\prime \sw 0\star_\theta\widetilde{a}^\prime) .
\end{gather*}
The codomain $(A\otimes A)_\theta$ is endowed with the $2$-cocycle
deformation of the usual tensor algebra structure on $A\otimes A$,
i.e.,
\begin{gather*}
(a\otimes a^\prime)\star_\theta (\widetilde{a}\otimes \widetilde{a}^\prime) :=
\sigma_\theta(a \sw{-1} a^\prime\sw{-1}\otimes \widetilde{a}\sw{-1} \widetilde{a}^\prime\sw{-1})
 (a\sw 0 \widetilde{a} \sw0)\otimes (a^\prime \sw 0 \widetilde{a}^\prime\sw 0) .
\end{gather*}
As a consequence, $\varphi_\theta$ restricts to an
isomorphism $\varphi_\theta \colon (\ker\mu_{A_\theta})^2 \to (\ker \mu_A)^2_\theta$,
which implies that it descends to the claimed isomorphism $\overline{\varphi}_\theta$
between the quotient modules. The explicit expression for $\overline{\varphi}_\theta$
given in \eqref{eqn:barvarphimap} follows from the computation
\begin{align*}
\nn \overline{\varphi}_\theta(a \dd_\theta a^\prime) &=\varphi_\theta(a\otimes a^\prime - a\star_\theta a^\prime\otimes \1\big) = \sigma_\theta(a\sw{-1}\otimes a^\prime\sw{-1}) \big(a\sw{0}\otimes a^\prime\sw{0} - a\sw{0} a^\prime\sw{0}\otimes\1\big)\\
&= \sigma_\theta(a\sw{-1}\otimes a^\prime\sw{-1}) a\sw{0} \dd(a^\prime\sw{0}) = a\star_\theta \dd a^\prime ,
\end{align*}
where in the second step we used~\eqref{eqn:varphimap} and~\eqref{eqn:starproduct},
and in the last step we used that $\dd$ is a~$K$-co\-module map.
The statement about short exact sequences follows by using also
Proposi\-tion~\ref{propo:universalAtiyahiso}.
\end{proof}
\begin{cor}\label{cor:deformedhorizonalB}
$\Omega^1_\mathrm{hor}(A_\theta) = A_\theta \Omega^1(B) A_\theta = A_\theta \Omega^1(B) = \Omega^1(B) A_\theta$ with $\Omega^1(B)$ the K\"ahler $1$-forms on $B= A^{\coH}_\theta \subseteq A_\theta$.
\end{cor}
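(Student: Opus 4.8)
The plan is to transport the structural identities $\Omega^1_\mathrm{hor}(A) = A\Omega^1(B)A = A\Omega^1(B) = \Omega^1(B)A$ from~\eqref{hor.Hopf} for the classical Hopf fibration across the isomorphism $\overline{\varphi}_\theta$ of Proposition~\ref{propo:KaehlerAtiyahiso}, using that $\overline{\varphi}_\theta$ restricts to $\Omega^1_\mathrm{hor}(A_\theta)$ as the leftmost vertical arrow in~\eqref{eqn:KaehlerAtiyahiso}. First I would identify, exactly as in the proof of Proposition~\ref{propo:universalAtiyahiso}, that $\overline{\varphi}_\theta$ acts as the identity on forms built from $B$: since~\eqref{eqn:BcoactionK} shows the $K$-coaction on the $B$-generators $z,z^\ast,x$ has cocycle factors that trivialize, we get $\dd_\theta b = \dd b$ and $b\star_\theta b' = b\,b'$ for $b,b'\in B$, so $\overline{\varphi}_\theta$ is the identity on $\Omega^1(B)\subseteq \Omega^1(A_\theta)$. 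Here $\Omega^1(B)$ is understood as the K\"ahler $1$-forms on $B$, which by Lemma~\ref{lem.Ka} (whose hypotheses were checked for the $\bbS^2$-generators in the paragraph preceding~\eqref{hor.Hopf}) coincides with the restriction of the K\"ahler calculus on either $A$ or $A_\theta$.

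Next I would argue that $\overline{\varphi}_\theta$ maps $A_\theta\Omega^1(B)A_\theta$ onto $\big(A\Omega^1(B)A\big)_\theta = \Omega^1_\mathrm{hor}(A)_\theta$. Concretely, an element $a\star_\theta \dd b \star_\theta a'$ with $a,a'\in A_\theta$, $b\in B$, is sent by $\overline{\varphi}_\theta$ to an element of $A\,\dd b\,A$ up to invertible scalar cocycle factors (using~\eqref{eqn:barvarphimap} together with the fact that $\overline{\varphi}_\theta$ is a module map over the deformed actions described before Proposition~\ref{propo:universalAtiyahiso}), so the image is precisely $\big(A\Omega^1(B)A\big)_\theta$. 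Since $\overline{\varphi}_\theta$ is a $(K,H)$-bicomodule left $A_\theta$-module isomorphism and the diagram~\eqref{eqn:KaehlerAtiyahiso} has exact rows, the leftmost arrow is an isomorphism onto $\Omega^1_\mathrm{hor}(A)_\theta = \big(A\Omega^1(B)A\big)_\theta$; pulling back through $\overline{\varphi}_\theta^{-1}$ then gives $\Omega^1_\mathrm{hor}(A_\theta) = A_\theta\Omega^1(B)A_\theta$.

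The remaining equalities $A_\theta\Omega^1(B)A_\theta = A_\theta\Omega^1(B) = \Omega^1(B)A_\theta$ I would obtain by transporting the corresponding classical collapse $A\Omega^1(B)A = A\Omega^1(B) = \Omega^1(B)A$ from~\eqref{hor.Hopf}: applying $\overline{\varphi}_\theta$ and using that it is a left $A_\theta$-module map and the identity on $\Omega^1(B)$, one reads off that a right multiplication by $a'\in A_\theta$ on $a\star_\theta \dd b$ can, after moving through~\eqref{eqn:Rmatrix} (braided commutativity of $A_\theta$) and the decomposition~\eqref{eqn:Adecomposition}–\eqref{eqn:starproductdecomposition} into homogeneous components, be rewritten as a left multiplication plus scalar factor; equivalently, one invokes the classical identity componentwise since the $\star_\theta$-product only differs from the undeformed product by invertible scalars on homogeneous pieces. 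Either route shows $\dd b\star_\theta a' \in A_\theta \star_\theta \dd B$ and conversely, giving all three coincidences.

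The main obstacle I expect is purely bookkeeping rather than conceptual: one must be careful that $\overline{\varphi}_\theta$ is \emph{not} an algebra map for the naive tensor structures but only for the braided one, so when verifying that images of $a\star_\theta\dd b\star_\theta a'$ land in the undeformed-shaped module $\big(A\Omega^1(B)A\big)_\theta$ the cocycle and $R$-matrix factors must be tracked precisely, and one must use that these factors are invertible scalars (hence do not alter which submodule an element lies in). Once that is handled, everything reduces to the already-established Proposition~\ref{propo:KaehlerAtiyahiso} together with the classical identities~\eqref{hor.Hopf} and Lemma~\ref{lem.Ka}.
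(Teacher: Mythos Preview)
Your proposal is correct and follows essentially the same route as the paper: both use the classical identities~\eqref{hor.Hopf}, the invertibility of the cocycle $\sigma_\theta$ (so that the deformed $A_\theta$-actions on $\Omega^1(A)_\theta$ differ from the undeformed ones only by nonzero scalars and hence $\Omega^1_{\mathrm{hor}}(A)_\theta = A_\theta\star_\theta\Omega^1(B)\star_\theta A_\theta = A_\theta\star_\theta\Omega^1(B) = \Omega^1(B)\star_\theta A_\theta$), and then transport these equalities back through the isomorphism $\overline{\varphi}_\theta$ of Proposition~\ref{propo:KaehlerAtiyahiso}, which sends $a\,(\dd_\theta b)\,a'$ to $a\star_\theta(\dd b)\star_\theta a'$. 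The paper's write-up is simply more compressed, stating the $\star_\theta$-version of~\eqref{hor.Hopf} in one line and then invoking $\overline{\varphi}_\theta$; your version unpacks the same steps more explicitly.
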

\begin{proof}
By \eqref{hor.Hopf} and invertibility of the $2$-cocycle $\sigma_\theta$,
it follows that
\begin{gather*}
\Omega^1_\mathrm{hor}(A)_\theta = A_\theta \star_\theta \Omega^1(B)
\star_\theta A_\theta = A_\theta \star_\theta\Omega^1(B) = \Omega^1(B) \star_\theta A_\theta ,
\end{gather*}
where $\star_\theta$ denotes the deformed left and right
$A_\theta$-module structures on $\Omega^1(A)_\theta$.
Our claim then follows from the isomorphism $\overline{\varphi}_\theta$
given in~\eqref{eqn:barvarphimap}, because it maps bijectively between
$a (\dd_\theta b) a^\prime \in A_\theta \Omega^1(B) A_\theta\subseteq
\Omega^1(A_\theta)$ and $a\star_\theta (\dd b)\star_\theta a^\prime\in A_\theta
\star_\theta \Omega^1(B) \star_\theta A_\theta\subseteq \Omega^1(A)_\theta$.
\end{proof}

\subsection{Connections}
The aim of this section is to characterize connections with respect to the K\"ahler differential calculi
for both the classical Hopf fibration $\bbS^3\to \bbS^2$ and the
deformed Hopf fibration. It will be shown that they are equivalent in a suitable sense.

We first consider the classical Hopf fibration $(A,\delta)$ from Section~\ref{subsec:classical}.
By Definition \ref{def:connectiongeneralcalculus}, the set of connections is the set of
splittings of \eqref{eqn:KaehlerAtiyahclassical} or equivalently the set of
connection forms
\begin{gather*}
\Con(A,\delta) := \big\{ \overline{\omega} \in \Hom^H\big(\mathfrak{h}^\vee, \Omega^1(A)\big) \colon
\overline{\ver}\circ \overline{\omega} =\1\otimes\id \big\} ,
\end{gather*}
where $\Hom^H$ denotes the set of right $H$-comodule morphisms.
Let us recall that $\mathfrak{h}= \Der_{\epsilon}(H)\cong\bbC$
is $1$-dimensional and choose a basis $X\in\mathfrak{h}$, e.g., the linear map
\begin{gather}\label{eqn:X}
X \colon \ H\longrightarrow\bbC ,\qquad t^n \longmapsto n.
\end{gather}
Let $\chi \in\mathfrak{h}^\vee$ be the dual basis defined by $\langle \chi,X\rangle =1$.
Composing $\overline{\mathrm{ver}}$ with the evaluation map $\langle -,X\rangle \colon \mathfrak{h}^\vee\to\bbC$
defines the morphism
\begin{gather*}
\overline{\mathrm{ver}}^X \colon \ \Omega^1(A) \longrightarrow A~,
\omega \longmapsto \overline{\mathrm{ver}}^X(\omega):= \langle \overline{\mathrm{ver}}(\omega),X\rangle .
\end{gather*}
As the right adjoint $H$-coaction on $\mathfrak{h}^\vee$ is trivial,
it follows that
\begin{gather*}
\Con(A,\delta) \cong \big\{\omega \in \Omega^1(A)^{\coH}\colon
\overline{\mathrm{ver}}^X(\omega) = \1 \big\} .
\end{gather*}
The bijection is given explicitly by $\overline{\omega}(\chi) =\omega$,
for the dual basis vector $\chi \in\mathfrak{h}^\vee$.
Analogously, the set of connections for the deformed Hopf fibration $(A_\theta,\delta)$
from Section~\ref{subsec:deformation} is the set of splittings of~\eqref{eqn:KaehlerAtiyah}
or equivalently the set of connection forms
\begin{align}
\nn \Con(A_\theta,\delta) :=& \big\{ \overline{\omega}_\theta \in \Hom^H\big(\mathfrak{h}^\vee, \Omega^1(A_\theta)\big) \colon \overline{\mathrm{ver}}_\theta\circ \overline{\omega}_\theta =\1 \otimes \id \big\}\\
 \cong& \big\{\omega_\theta \in \Omega^1(A_\theta)^{\coH}
\colon \overline{\mathrm{ver}}_\theta^X (\omega_\theta) = \1 \big\} .\label{eqn:deformedCon}
\end{align}
Notice that every connection on $(A,\delta)$ and also every connection on $(A_\theta,\delta)$
is strong because of~\eqref{hor.Hopf} and Corollary~\ref{cor:deformedhorizonalB}.
\begin{propo}\label{propo:Coniso}
The isomorphism $ \overline{\varphi}_\theta\colon \Omega^1(A_\theta)\to \Omega^1(A)_\theta$
from Proposition~{\rm \ref{propo:KaehlerAtiyahiso}} defines a bijection
\begin{gather*}
\overline{\varphi}_\theta \colon \ \Con(A_\theta,\delta) \longrightarrow \Con(A,\delta), \qquad \omega_\theta\longmapsto \overline{\varphi}_\theta (\omega_\theta) .
\end{gather*}
\end{propo}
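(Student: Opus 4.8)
The plan is to show that the isomorphism $\overline{\varphi}_\theta\colon \Omega^1(A_\theta)\to \Omega^1(A)_\theta$ from Proposition~\ref{propo:KaehlerAtiyahiso}, which is already known to be an isomorphism of the two short exact Atiyah sequences~\eqref{eqn:KaehlerAtiyahiso}, restricts to a bijection on the sets of splittings, and hence—after identifying $\Omega^1(A)_\theta$ with $\Omega^1(A)$ as $(K,H)$-bicomodules—on the sets of connection forms. Recall that $\Con(A_\theta,\delta)$ and $\Con(A,\delta)$ were presented in two equivalent guises: as $H$-colinear splittings $\overline{\omega}_\theta\colon \mathfrak{h}^\vee\to\Omega^1(A_\theta)$ (resp.\ $\overline{\omega}\colon\mathfrak{h}^\vee\to\Omega^1(A)$) of $\overline{\ver}_\theta$ (resp.\ $\overline{\ver}$), or as coinvariant $1$-forms $\omega_\theta\in\Omega^1(A_\theta)^{\coH}$ with $\overline{\ver}_\theta^X(\omega_\theta)=\1$ (resp.\ $\omega\in\Omega^1(A)^{\coH}$ with $\overline{\ver}^X(\omega)=\1$), via the dual basis vector $\chi\in\mathfrak{h}^\vee$. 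I would work with the second, more concrete description.

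First I would recall that $\overline{\varphi}_\theta$ is a $(K,H)$-bicomodule isomorphism, so in particular it is right $H$-colinear, and therefore it maps $\Omega^1(A_\theta)^{\coH}$ bijectively onto $\big(\Omega^1(A)_\theta\big)^{\coH}$. Since the right $H$-coaction on $\Omega^1(A)_\theta$ is the same as on $\Omega^1(A)$ (only the left $A_\theta$-action is deformed, as noted before Proposition~\ref{propo:universalAtiyahiso}), we have $\big(\Omega^1(A)_\theta\big)^{\coH}=\Omega^1(A)^{\coH}$, so $\overline{\varphi}_\theta$ restricts to a linear isomorphism $\Omega^1(A_\theta)^{\coH}\xrightarrow{\ \cong\ }\Omega^1(A)^{\coH}$. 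Next I would use commutativity of the right-hand square in~\eqref{eqn:KaehlerAtiyahiso}, namely $\overline{\ver}\circ\overline{\varphi}_\theta=\overline{\ver}_\theta$ (the right vertical arrow being $\id$ on $A_\theta\otimes\mathfrak{h}^\vee$). Composing with the evaluation $\langle-,X\rangle$ gives $\overline{\ver}^X\circ\overline{\varphi}_\theta=\overline{\ver}_\theta^X$ as maps $\Omega^1(A_\theta)\to A$ (note $A_\theta=A$ as vector spaces, so the codomains agree). Hence for $\omega_\theta\in\Omega^1(A_\theta)^{\coH}$ one has $\overline{\ver}^X\big(\overline{\varphi}_\theta(\omega_\theta)\big)=\overline{\ver}_\theta^X(\omega_\theta)$, so the normalization condition $\overline{\ver}_\theta^X(\omega_\theta)=\1$ is equivalent to $\overline{\ver}^X\big(\overline{\varphi}_\theta(\omega_\theta)\big)=\1$. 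Combining these two facts, $\overline{\varphi}_\theta$ carries the subset of $\Omega^1(A_\theta)^{\coH}$ cut out by the normalization condition bijectively onto the corresponding subset of $\Omega^1(A)^{\coH}$, i.e.\ $\overline{\varphi}_\theta\colon\Con(A_\theta,\delta)\to\Con(A,\delta)$ is a bijection, with inverse induced by $\overline{\varphi}_\theta^{-1}$.

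To complete the argument I would also spell out the translation back to the connection-form picture: under the identifications $\Con(A_\theta,\delta)\cong\{\omega_\theta\in\Omega^1(A_\theta)^{\coH}\colon\overline{\ver}_\theta^X(\omega_\theta)=\1\}$ and $\Con(A,\delta)\cong\{\omega\in\Omega^1(A)^{\coH}\colon\overline{\ver}^X(\omega)=\1\}$ given by $\overline{\omega}_\theta\mapsto\overline{\omega}_\theta(\chi)$ and $\overline{\omega}\mapsto\overline{\omega}(\chi)$, the map $\overline{\varphi}_\theta$ on connection forms is $\overline{\omega}_\theta\mapsto\overline{\varphi}_\theta\circ\overline{\omega}_\theta$, which makes sense and lands in $\Con(A,\delta)$ precisely by the square-commutativity and colinearity just used (the triviality of the right adjoint $H$-coaction on $\mathfrak{h}^\vee$ is what makes the two presentations of each $\Con$ equivalent in the first place). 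I do not expect any serious obstacle here: all the hard work was already done in Proposition~\ref{propo:KaehlerAtiyahiso}, where $\overline{\varphi}_\theta$ was shown to be a $(K,H)$-bicomodule left $A_\theta$-module isomorphism of short exact sequences. The only point requiring a little care is to keep track of the two equivalent descriptions of $\Con$ and to note that $\overline{\ver}$ and $\overline{\ver}_\theta$ have the same codomain $A\otimes\mathfrak{h}^\vee$ (with the identity as the right vertical map), so that the normalization conditions match up verbatim under $\overline{\varphi}_\theta$.
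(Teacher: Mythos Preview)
Your proposal is correct and follows essentially the same approach as the paper's proof: both use the commutativity of the right-hand square in~\eqref{eqn:KaehlerAtiyahiso} to obtain $\overline{\ver}^X\circ\overline{\varphi}_\theta=\overline{\ver}_\theta^X$, together with the $H$-colinearity of $\overline{\varphi}_\theta$, and then observe that $\overline{\varphi}_\theta^{-1}$ provides the inverse. Your version simply spells out more carefully the identification $\big(\Omega^1(A)_\theta\big)^{\coH}=\Omega^1(A)^{\coH}$ and the translation between the two descriptions of $\Con$, which the paper leaves implicit.
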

\begin{proof}
Commutativity of the diagram \eqref{eqn:KaehlerAtiyahiso} implies
$\overline{\mathrm{ver}}^X\big(\overline{\varphi}_\theta(\omega_\theta)\big)
=\overline{\mathrm{ver}}^X_\theta( \omega_\theta) = \1$, i.e.,
$\overline{\varphi}_\theta( \omega_\theta) \in\Con(A,\delta)$.
The inverse map $ \Con(A, \delta)\to \Con(A_\theta,\delta)$
is given by $\omega \mapsto \overline{\varphi}_\theta^{-1}(\omega)$.
\end{proof}

\begin{remm}Similar results for connections on modules were proven in \cite{AschieriCon,BarnesNAG}.
\end{remm}

Our next aim is to refine the result of Proposition~\ref{propo:Coniso}
by using more explicit features of the example under investigation.
Using \cite[Section~16.1]{Eis:com},
the module of K\"ahler $1$-forms on $A$ can be computed as
\begin{gather}\label{eqn:classicalKaehler}
\Omega^1(A) = \frac{A \dd z_1 \oplus A \dd z_2 \oplus A \dd z_1^\ast \oplus A \dd z_2^\ast}{\langle
\dd z_1^\ast z_1 + z_1^\ast \dd z_1 + \dd z_2^\ast z_2 + z_2^\ast \dd z_2 \rangle} ,
\end{gather}
where the right $A$-action is defined by $s a:= a s$, for all $a\in A$
and $s\in A \dd z_1 \oplus A \dd z_2 \oplus A \dd z_1^\ast \oplus A \dd z_2^\ast$.
The differential $\dd \colon A \to \Omega^1(A)$ is specified by mapping
each generator $z_1$, $z_2$, $z_1^\ast$, $z_2^\ast$ of $A$ to the corresponding generator
$\dd z_1$, $\dd z_2$, $\dd z_1^\ast$, $\dd z_2^\ast$ of~\eqref{eqn:classicalKaehler}
and the Leibniz rule. Moreover, the vertical lift
$\overline{\mathrm{ver}}^X \colon \Omega^1(A) \to A$ is given by
\begin{gather}\label{eqn:verXexplicit}
\overline{\mathrm{ver}}^X (a \dd a^\prime) =
a a^\prime\sw 0 X(a^\prime \sw 1) ,
\end{gather}
for all $a,a^\prime\in A$.

Using braided commutativity of the
deformed Hopf fibration $(A_\theta,\delta,\rho)$, a similar computation shows that the
module of K\"ahler $1$-forms on $A_\theta$ reads as
\begin{gather}\label{eqn:deformedKaehler}
\Omega^1(A_\theta) = \frac{A_\theta \dd_\theta z_1 \oplus A_\theta \dd_\theta z_2 \oplus
A_\theta \dd_\theta z_1^\ast \oplus A_\theta \dd_\theta z_2^\ast}{\langle
\dd_\theta z_1^\ast z_1 + z_1^\ast \dd_\theta z_1 + \dd_\theta z_2^\ast z_2 + z_2^\ast \dd_\theta z_2 \rangle} ,
\end{gather}
where the right $A_\theta$-action is defined using the cotriangular structure
\eqref{eqn:Rmatrix} by $s a := R_\theta(a\sw{-1}\otimes s\sw{-1}) a\sw 0 s\sw 0$, for
all $a \in A_\theta$ and $s\in A_\theta \dd_\theta z_1 \oplus A_\theta \dd_\theta z_2 \oplus
A_\theta \dd_\theta z_1^\ast \oplus A_\theta\,\dd_\theta z_2^\ast $.
The differential $\dd_\theta \colon A _\theta\to \Omega^1(A_\theta)$ is specified by mapping
each generator $z_1$, $z_2$, $z_1^\ast$, $z_2^\ast$ of $A_\theta$ to the corresponding generator
$\dd_\theta z_1$, $\dd_\theta z_2$, $\dd_\theta z_1^\ast$, $\dd_\theta z_2^\ast$ of~\eqref{eqn:deformedKaehler}
and the Leibniz rule. The vertical lift
$\overline{\mathrm{ver}}_\theta^X \colon \Omega^1(A_\theta) \to A_\theta$ is given by
\begin{gather}\label{eqn:verthetaXexplicit}
\overline{\mathrm{ver}}_\theta^X (a\,\dd_\theta a^\prime) = a\star_\theta a^\prime \sw 0 X(a^\prime \sw 1) ,
\end{gather}
for all $a,a^\prime\in A_\theta$.
\begin{lem}\label{lem:Conexistence}
The $1$-form $\omega^0 := z_1^\ast \dd z_1 + z_2^\ast \dd z_2 \in\Omega^1(A)$
defines a $K$-coinvariant connection on the classical Hopf fibration $(A,\delta)$.
Hence, there exists a bijection
\begin{gather*}
\Con(A,\delta) \cong \big\{\omega^0 + \alpha \colon \alpha\in \Omega^1(B)\big\} .
\end{gather*}
Analogously, the $1$-form $\omega_\theta^0 :=
z_1^\ast \dd_\theta z_1 + z_2^\ast \dd_\theta z_2 \in\Omega^1(A_\theta)$
defines a $K$-coinvariant connection on the deformed Hopf fibration $(A_\theta,\delta)$.
Hence, there exists a bijection
\begin{gather*}
\Con(A_\theta,\delta) \cong \big\{\omega_\theta^0 + \alpha \colon \alpha\in \Omega^1(B)\big\} .
\end{gather*}
\end{lem}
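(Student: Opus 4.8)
The plan is to verify directly that $\omega^0$ is a connection form in the sense of the identification $\Con(A,\delta)\cong\{\omega\in\Omega^1(A)^{\coH}\colon\overline{\ver}^X(\omega)=\1\}$, and then to use the already-established short exactness of the Atiyah sequence~\eqref{eqn:KaehlerAtiyachclassical} to describe the affine structure on $\Con(A,\delta)$. First I would check that $\omega^0=z_1^\ast\dd z_1+z_2^\ast\dd z_2$ is $K$-coinvariant: by~\eqref{eqn:Kcoaction} we have $\rho(z_1^\ast)=t_{(-1,0)}\otimes z_1^\ast$ and $\rho(\dd z_1)=t_{(1,0)}\otimes\dd z_1$ (since $\dd$ is a $K$-comodule map), so the $K$-degrees cancel and similarly for the second summand; hence $\rho(\omega^0)=\1\otimes\omega^0$. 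Next I would check $H$-coinvariance with respect to the right adjoint coaction --- equivalently that $\omega^0\in\Omega^1(A)^{\coH}$ in the relevant sense --- using $\delta(z_1)=z_1\otimes t$, $\delta(z_1^\ast)=z_1^\ast\otimes t^\ast$, so the $H$-weights again cancel. Finally I would compute $\overline{\ver}^X(\omega^0)$ from~\eqref{eqn:verXexplicit}: this gives $z_1^\ast z_1\,X(t)+z_2^\ast z_2\,X(t)=(z_1^\ast z_1+z_2^\ast z_2)\cdot 1=\1$ by the $3$-sphere relation. Hence $\omega^0\in\Con(A,\delta)$, so in particular the set of connections is nonempty.

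For the bijection $\Con(A,\delta)\cong\{\omega^0+\alpha\colon\alpha\in\Omega^1(B)\}$, I would argue that the difference of any two connection forms is horizontal. If $\omega,\omega'\in\Con(A,\delta)$ then $\overline{\ver}^X(\omega-\omega')=0$, so $\omega-\omega'\in\ker\overline{\ver}=\Omega^1_{\mathrm{hor}}(A)=\Omega^1(B)A$ by exactness of~\eqref{eqn:KaehlerAtiyachclassical} and~\eqref{hor.Hopf}. Here I would need that $\omega-\omega'$, being a difference of $K$-coinvariant and $H$-coinvariant elements landing in the horizontal forms $\Omega^1(B)A$, actually lies in $\Omega^1(B)$ itself: the $H$-coinvariance forces the $A$-factor in $\Omega^1(B)A$ to be $H$-coinvariant, hence to lie in $B$, and then $\Omega^1(B)B=\Omega^1(B)$. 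Conversely, for any $\alpha\in\Omega^1(B)$ one has $\overline{\ver}^X(\omega^0+\alpha)=\overline{\ver}^X(\omega^0)+\overline{\ver}^X(\alpha)=\1+0=\1$ since horizontal forms are annihilated by $\overline{\ver}^X$, and $\omega^0+\alpha$ is clearly $K$- and $H$-coinvariant; so $\omega^0+\alpha\in\Con(A,\delta)$. This establishes the claimed bijection.

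The deformed case proceeds along the same lines, now using braided commutativity in place of commutativity: $\omega_\theta^0=z_1^\ast\dd_\theta z_1+z_2^\ast\dd_\theta z_2$ is $K$-coinvariant by the same degree-counting argument, and $\overline{\ver}_\theta^X(\omega_\theta^0)=z_1^\ast\star_\theta z_1+z_2^\ast\star_\theta z_2=\1$ by~\eqref{eqn:starsphererelation}. The affine description then follows from exactness of~\eqref{eqn:KaehlerAtiyach} together with Corollary~\ref{cor:deformedhorizonalB}, which identifies $\Omega^1_{\mathrm{hor}}(A_\theta)=\Omega^1(B)A_\theta$ with $\Omega^1(B)$ the (undeformed) K\"ahler forms on $B$; note that $\Omega^1(B)$ is genuinely undeformed because the $K$-coaction on $B$ factors through the commutative part, so $\star_\theta$ restricted to $B$-related forms agrees with the undeformed product, exactly as in the proof of Lemma~\ref{lem:deformedcoinvariants}. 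Alternatively, one can simply transport the classical statement through the bijection $\overline{\varphi}_\theta\colon\Con(A_\theta,\delta)\to\Con(A,\delta)$ of Proposition~\ref{propo:Coniso}, checking that $\overline{\varphi}_\theta(\omega_\theta^0)=\omega^0$ and that $\overline{\varphi}_\theta$ restricts to the identity on $\Omega^1(B)$.

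The main obstacle I anticipate is the bookkeeping needed to show that the difference of two connection forms lies in $\Omega^1(B)$ rather than merely in $\Omega^1(B)A$ --- that is, correctly tracking how $K$- and $H$-coinvariance interact with the module of horizontal forms --- and, on the deformed side, being careful that the braided right $A_\theta$-module structure on $\Omega^1(A_\theta)$ does not secretly deform $\Omega^1(B)$; both points are resolved by the degree decompositions already set up in Section~\ref{subsec:associated} and by Corollary~\ref{cor:deformedhorizonalB}, so the argument should be routine once those are invoked.
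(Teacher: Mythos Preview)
Your argument is correct and follows essentially the same route as the paper: verify that $\omega^0$ (resp.\ $\omega_\theta^0$) is $H$- and $K$-coinvariant, compute $\overline{\ver}^X(\omega^0)=\1$ from the $3$-sphere relation (resp.\ $\overline{\ver}_\theta^X(\omega_\theta^0)=\1$ from~\eqref{eqn:starsphererelation}), and then obtain the affine description from $\Omega^1_{\mathrm{hor}}(A)^{\coH}=\Omega^1(B)$ via~\eqref{hor.Hopf} (resp.\ Corollary~\ref{cor:deformedhorizonalB}). The paper's proof is simply the terse version of what you wrote.

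One small slip to fix: membership in $\Con(A,\delta)$ requires only $H$-coinvariance, not $K$-coinvariance, and for a general $\alpha\in\Omega^1(B)$ the form $\omega^0+\alpha$ is \emph{not} $K$-coinvariant (e.g., $\alpha=z\,\dd z^\ast$ has $K$-weight $(2,-2)$ by~\eqref{eqn:BcoactionK}); likewise two arbitrary connections $\omega,\omega'$ need not be $K$-coinvariant. This is harmless for your argument, since the reduction from $\Omega^1_{\mathrm{hor}}(A)$ to $\Omega^1(B)$ uses only $H$-coinvariance, but you should drop the stray $K$-coinvariance claims in the bijection step.
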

\begin{proof}The one-form $\omega^0$ is clearly $H$-coinvariant and $K$-coinvariant.
Using~\eqref{eqn:verXexplicit} and~\eqref{eqn:X}, it is found that
\begin{gather*}
\overline{\mathrm{ver}}^X\big(\omega^0\big) = z_1^\ast z_1 X(t) +
z_2^\ast z_2 X(t) = z_1^\ast z_1 + z_2^\ast z_2 = \1 .
\end{gather*}
The bijection is a consequence
of $\Omega^1_\mathrm{hor}(A)^\coH= \Omega^1(B)$ being the K\"ahler forms on $B$, cf.~\eqref{hor.Hopf}.

Analogously, $\omega_\theta^0$ is $H$-coinvariant and $K$-coinvariant.
Using \eqref{eqn:verthetaXexplicit} and~\eqref{eqn:X}, it is found that
\begin{gather*}
\overline{\mathrm{ver}}_\theta^X\big(\omega_\theta^0\big) = z_1^\ast\star_\theta z_1 + z_2^\ast \star_\theta z_2 = \1 ,
\end{gather*}
where the last step uses~\eqref{eqn:starsphererelation}. The bijection is a consequence
of $\Omega^1_{\mathrm{hor}}(A_\theta)^\coH = \Omega^1(B)$ being the K\"ahler forms on~$B$, cf.\ Corollary~\ref{cor:deformedhorizonalB}.
\end{proof}

\begin{propo}\label{prop:conpreservation}
Expressed in the form of Lemma~{\rm \ref{lem:Conexistence}},
the bijection from Proposition~{\rm \ref{propo:Coniso}} reads explicitly as
\begin{gather*}
\overline{\varphi}_\theta \colon \ \Con(A_\theta,\delta) \longrightarrow \Con(A,\delta) ,\qquad
\omega^0_\theta + \alpha \longmapsto \omega^0 + \alpha .
\end{gather*}
In other words, the affine map $\overline{\varphi}_\theta$ preserves
the points $\omega^0_\theta$ and $\omega^0$, and its
linear part given by the identity map $\id\colon \Omega^1(B)\to \Omega^1(B)$.
\end{propo}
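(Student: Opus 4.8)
The plan is to exploit the fact that $\overline{\varphi}_\theta$ is a $(K,H)$-bicomodule left $A_\theta$-module isomorphism, in particular $\bbC$-linear, so that it suffices to compute its value on the single connection $\omega_\theta^0$ and its restriction to the affine direction space $\Omega^1(B)$ which, by Lemma~\ref{lem:Conexistence}, is common to both $\Con(A_\theta,\delta)$ and $\Con(A,\delta)$. The computational engine throughout is the explicit formula \eqref{eqn:barvarphimap}, namely $\overline{\varphi}_\theta(a\,\dd_\theta a') = a\star_\theta \dd a'$, combined with the observation that the $2$-cocycle $\sigma_\theta$ of \eqref{eqn:cocycle} evaluates trivially whenever the two $K$-weights involved are collinear: if $\mathbf{m}' = \lambda\mathbf{m}$ then $\sigma_\theta(t_{\mathbf{m}}\otimes t_{\mathbf{m}'}) = \exp(\pi{\rm i}\,\lambda\,\mathbf{m}^{\mathrm{T}}\Theta\mathbf{m}) = 1$ by antisymmetry of $\Theta$.

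First I would evaluate $\overline{\varphi}_\theta(\omega_\theta^0)$. By \eqref{eqn:barvarphimap}, $\overline{\varphi}_\theta(z_i^\ast\,\dd_\theta z_i) = z_i^\ast \star_\theta \dd z_i$, where the right-hand $\star_\theta$ is the deformed left $A_\theta$-module action on $\Omega^1(A)_\theta$. Since $\dd$ is a $K$-comodule map, $\dd z_1$ carries the same $K$-weight $t_{(1,0)}$ as $z_1$, while $z_1^\ast$ carries the weight $t_{(-1,0)}$ by \eqref{eqn:Kcoaction}; these are collinear, so the cocycle factor is trivial and $z_1^\ast\star_\theta \dd z_1 = z_1^\ast\,\dd z_1$, and similarly $z_2^\ast\star_\theta \dd z_2 = z_2^\ast\,\dd z_2$. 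Hence $\overline{\varphi}_\theta(\omega_\theta^0) = z_1^\ast\,\dd z_1 + z_2^\ast\,\dd z_2 = \omega^0$, so $\overline{\varphi}_\theta$ maps the distinguished point to the distinguished point.

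Next I would show $\overline{\varphi}_\theta$ restricts to the identity on $\Omega^1(B) = B\,\dd B$. An element there is a sum of terms $b\,\dd_\theta b'$ with $b,b'\in B$, and \eqref{eqn:barvarphimap} gives $\overline{\varphi}_\theta(b\,\dd_\theta b') = b\star_\theta \dd b'$; using the decomposition $B = \bigoplus_m A^{((m,-m),0)}$ recorded in \eqref{eqn:BcoactionK}, the relevant weights $(m,-m)$ and $(m',-m')$ both lie on the line $\bbR(1,-1)$, hence are collinear and the cocycle factor is again trivial, so $b\star_\theta \dd b' = b\,\dd b'$ (equivalently, this is Corollary~\ref{cor:deformedhorizonalB} applied with trivial $A_\theta$-prefactors). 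Combining with the previous step and linearity, $\overline{\varphi}_\theta(\omega_\theta^0 + \alpha) = \overline{\varphi}_\theta(\omega_\theta^0) + \overline{\varphi}_\theta(\alpha) = \omega^0 + \alpha$ for all $\alpha\in\Omega^1(B)$, which is exactly the claimed formula; the statement about the affine structure and the linear part follows at once. The only point requiring care is the bookkeeping of $K$-weights in the two cocycle evaluations, but this is a one-line check from collinearity and antisymmetry of $\Theta$ rather than a genuine obstacle.
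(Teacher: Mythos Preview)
Your proof is correct and follows essentially the same approach as the paper: both arguments use the explicit formula \eqref{eqn:barvarphimap} to verify directly that $\overline{\varphi}_\theta(\omega_\theta^0)=\omega^0$ and $\overline{\varphi}_\theta(b\,\dd_\theta b')=b\,\dd b'$ for $b,b'\in B$. The paper merely states these identities as ``easy to confirm'', whereas you supply the underlying reason, namely that the relevant $K$-weights are collinear so the antisymmetric matrix $\Theta$ makes the cocycle factor trivial; this is exactly the check the paper leaves implicit.
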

\begin{proof}From the explicit expression for $\overline{\varphi}_\theta$ given in~\eqref{eqn:barvarphimap},
it is easy to confirm that $\overline{\varphi}_\theta (\omega_\theta^0) = \omega^0$ and that
$\overline{\varphi}_\theta (b \dd_\theta b^\prime) = b \dd b^\prime$,
for all $b,b^\prime\in B$, which proves our claim.
\end{proof}
\begin{remm}\label{rem:connectionbijection}
Similarly to Remark~\ref{rem:nococycle}, the result in Proposition~\ref{prop:conpreservation}
is stronger than the isomorphisms that can be obtained by using the
general theory of $2$-cocycle deformations from \cite{Brain,Brain2} and \cite{Aschieri}.
Concretely, the property that $\alpha$ is unchanged by the isomorphism
$\omega^0_\theta + \alpha \mapsto \omega^0 + \alpha$ is a
particular feature of the example under investigation.
\end{remm}

\subsection{Gauge transformations}
We describe a notion of (infinitesimal) gauge transformations
for both the deformed and the classical Hopf fibration, together
with their actions on connections. We shall show that
the identification of connections from Propositions
\ref{propo:Coniso} and \ref{prop:conpreservation}
intertwines between the deformed and classical gauge transformations.
In other words, the theory of connections and their infinitesimal
gauge transformations on the deformed Hopf fibration
$(A_\theta,\delta)$ is equivalent to that on the classical
Hopf fibration $\bbS^3\to \bbS^2$.

Let us note that our notion of gauge transformations will be formalized
by {\em braided derivations} and hence it makes
explicitly use of the braided commutativity of $A_\theta$.
There also exists a more flexible
concept of gauge transformations
given by right $H$-comodule left $B$-module
automorphisms $f\colon A\to A$ satisfying $f(\1)=\1$, see, e.g.,~\cite{Brz:translation}.
Note that such $f$ are {\em not} required to be algebra homomorphisms.
However, for commutative principal comodule algebras, this definition does
not recover the usual concept of gauge transformations in classical geometry,
in contrast to our more special approach by braided derivations.
As a last remark, let us note that it would also be possible to describe
finite gauge transformations by using the noncommutative mapping
spaces from \cite{Barnes}. This is technically more involved
and will not be discussed here.

We describe presently the case of infinitesimal gauge transformations
of the deformed Hopf fibration $(A_\theta,\delta)$, which includes
the classical case $(A,\delta)$ by setting $\theta=0$.
Consider the associated left $B$-module $E_{A_\theta}(\mathfrak{h})
=A_\theta\Box_H \mathfrak{h}$, where the Lie algebra $\mathfrak{h} = \Der_\epsilon(H)$
is endowed with the adjoint left $H$-coaction, which is trivial as $U(1)$
is Abelian. Hence, $E_{A_\theta}(\mathfrak{h}) \cong B\otimes \mathfrak{h}$
as left $B$-modules and, using the basis element $X\in\mathfrak{h} $
from \eqref{eqn:X}, any element $\zeta \in E_{A_\theta}(\mathfrak{h}) $
can be written as $\zeta = b\otimes X $, for a unique $b\in B$.
\begin{defi}\label{def:infgaugetrafos}
An {\em infinitesimal gauge transformation} of the deformed Hopf fibration
$A_\theta$ is an element of the left $B$-module $E_{A_\theta}(\mathfrak{h})$
associated to $\mathfrak{h} = \Der_\epsilon(H)$.
The action of an infinitesimal gauge transformation $\zeta = b\otimes X \in E_{A_\theta}(\mathfrak{h})$
on $A_\theta$ is defined as
\begin{gather}\label{eqn:gaugeaction}
(-)\,{{}_{\theta}\triangleleft} \, \zeta \colon \ A_\theta \longrightarrow A_\theta ,\qquad
a \longmapsto a\,{{}_{\theta}\triangleleft}\, \zeta := a\sw 0\star_\theta b X(a\sw 1) .
\end{gather}
\end{defi}
\begin{remm}
It is easily checked that the action \eqref{eqn:gaugeaction} of
infinitesimal gauge transformations satisfies
\begin{gather*}
(a\star_\theta a^\prime)\,{{}_{\theta}\triangleleft}\, \zeta =
a\star_\theta \big(a^\prime \,{{}_{\theta}\triangleleft}\, \zeta \big) + R_{\theta}(\zeta\sw{-1} \otimes a^\prime\sw{-1})
\big(a\,{{}_{\theta}\triangleleft}\, \zeta\sw 0\big) a^\prime\sw{0} ,
\end{gather*}
for all $a,a^\prime\in A_\theta$ and $\zeta \in E_{A_\theta}(\mathfrak{h}) $, i.e.,
$E_{A_\theta}(\mathfrak{h})$ acts on~$A_\theta$ from the right by braided derivations. In particular,
this action preserves the left $B$-module structure on~$A_\theta$, i.e.,
$(b^\prime \star_\theta a)\,{{}_{\theta}\triangleleft} \, \zeta = b^\prime \star_\theta (a \,{{}_{\theta}\triangleleft}\, \zeta)$,
for all $b^\prime \in B$, $a\in A_\theta$ and $\zeta \in E_{A_\theta}(\mathfrak{h})$.
\end{remm}

\begin{remm}The infinitesimal gauge transformations from Definition~\ref{def:infgaugetrafos}
are a generalization of the analogous concept in ordinary differential geometry. Given a principal $G$-bundle
$P\to M$ over a manifold $M$, infinitesimal gauge transformations are given by the space of sections of
the vector bundle $VP/G\to P/G\cong M$ of vertical tangent vectors modulo~$G$.
Using fundamental vector fields,
the latter is isomorphic to the space of sections of the adjoint bundle
$P\times_{\mathrm{ad}}\mathfrak{g}\to M$,
which in our noncommutative example is given by $E_{A_\theta}(\mathfrak{h}) $. The action in
\eqref{eqn:gaugeaction} is the evident generalization of the usual action of infinitesimal gauge
transformations from ordinary differential geometry.
\end{remm}
The action \eqref{eqn:gaugeaction} can be extended to the differential graded algebra
$(\Omega^\bullet(A_\theta),\wedge_\theta,\dd_\theta)$ of deformed K\"ahler forms.
The latter is obtained by the {\em braided exterior algebra} of the symmetric
$A_\theta$-bimodule $\Omega^1(A_\theta)$, i.e., the wedge-product $\wedge_\theta$
satisfies the braided graded commutativity property
$\lambda \wedge_\theta \lambda^\prime = (-1)^{n\,m}~R_\theta(\lambda^\prime\sw{-1}\otimes \lambda\sw{-1})~
\lambda^\prime\sw 0\wedge_\theta \lambda\sw 0$, for all homogeneous
$\lambda \in\Omega^n(A_\theta)$ and $\lambda^\prime \in \Omega^m(A_\theta)$,
while the differential satisfies the graded Leibniz rule\footnote{Our variant of the
graded Leibniz rule is obtained by thinking of $\dd_\theta$ as acting from the right.
This is a convenient choice of convention for the present work,
because all other operations (e.g., covariant derivatives and gauge transformations) also act from the right.}
\begin{gather}\label{eqn:gradedLeibniz}
\dd_\theta(\lambda \wedge_\theta \lambda^\prime) = \lambda\wedge_\theta (\dd_\theta\lambda^\prime) +
(-1)^{m} (\dd_\theta\lambda)\wedge_\theta \lambda^\prime ,
\end{gather}
for all $\lambda\in\Omega^\bullet(A_\theta)$ and all homogeneous $\lambda^\prime \in \Omega^m(A_\theta)$.
\begin{remm}\label{abusenotation}
Using that the isomorphism $\Omega^1(A_\theta)\cong \Omega^1(A)_\theta$ from
Proposition~\ref{propo:KaehlerAtiyahiso} is an isomorphism
of $(K,H)$-bicomodule $A_\theta$-bimodules, we obtain a canonical isomorphism
of differential graded algebras
\begin{gather*}
(\Omega^\bullet(A_\theta),\wedge_\theta,\dd_\theta) \cong (\Omega^\bullet(A),\wedge,\dd)_\theta ,
\end{gather*}
where the right hand side is the $2$-cocycle deformation of the
differential graded algebra of undeformed K\"ahler forms, see, e.g., \cite[Proposition~3.17]{Brain2}.
By a convenient abuse of notation, we shall often suppress this isomorphism in what follows, i.e.,
we simply identify $\lambda \wedge_\theta \lambda^\prime\in \Omega^\bullet(A_\theta)$
with the deformed wedge-product $\lambda\wedge_\theta \lambda^\prime =
\sigma_{\theta}(\lambda\sw{-1}\otimes\lambda^\prime\sw{-1})
\lambda\sw 0\wedge \lambda^\prime \sw 0\in \Omega^\bullet(A)_\theta$ and $\dd_\theta \lambda \in
\Omega^\bullet(A_\theta)$ with $\dd \lambda\in \Omega^\bullet(A)_\theta$.
\end{remm}

For every $\zeta=b\otimes X \in E_{A_\theta}(\mathfrak{h})$, we
define the contraction map $\iota_\zeta^\theta\colon \Omega^1(A_\theta)\to A_\theta $ by setting
\begin{gather*}
\iota^\theta_\zeta(a\, \dd_\theta a^\prime) := a\star_\theta (a^\prime \,{{}_{\theta}\triangleleft}\,\zeta )
= \overline{\ver}_\theta^X (a \dd_\theta a^\prime)\star_\theta b ,
\end{gather*}
for all $a,a^\prime \in A_\theta$. Because $\overline{\ver}_\theta^X$ is a left $B$-module
morphism, $\iota^\theta_\zeta$ preserves the left
$B$-module structures too, i.e., $\iota^\theta_\zeta( b^\prime \star_\theta \lambda) = b^\prime \star_\theta
\iota_{\zeta}^\theta(\lambda)$, for all $ b^\prime \in B$ and $\lambda\in\Omega^1(A_\theta)$.
This map can be extended to the whole of $\Omega^\bullet(A_\theta)$ as a braided anti-derivation, i.e.,
\begin{gather*}
\iota^\theta_\zeta(\lambda\wedge_\theta\lambda^\prime) =
\lambda\wedge_\theta \iota^\theta_{\zeta}(\lambda^\prime) +
(-1)^m R_\theta(\zeta\sw{-1}\otimes \lambda^\prime\sw{-1})
\iota^\theta_{\zeta\sw 0} (\lambda)\wedge_\theta \lambda^\prime \sw 0 ,
\end{gather*}
for all $\lambda\in \Omega^\bullet(A_\theta)$ and all homogeneous
$\lambda^\prime\in\Omega^m(A_\theta)$.
\begin{defi}
The action of an infinitesimal gauge transformation
$\zeta\in E_{A_\theta}(\mathfrak{h})$ on a general form
$\lambda\in \Omega^\bullet(A_\theta)$ is defined by Cartan's magic formula
\begin{gather*}
\lambda \,{{}_{\theta}\triangleleft}\,\zeta :=
\dd_\theta \iota^\theta_\zeta(\lambda) + \iota^\theta_\zeta(\dd_\theta\lambda) .
\end{gather*}
\end{defi}

\begin{lem}\label{eqn:undeformedgaugetrafos}
Let $\omega_\theta \in \Con(A_\theta,\delta) \subseteq \Omega^1(A_\theta)$
be a deformed connection, cf.~\eqref{eqn:deformedCon}.
Then $ \omega_\theta \,{{}_{\theta}\triangleleft}\,\zeta = \dd_\theta b$,
for all $\zeta = b\otimes X\in E_{A_\theta}(\mathfrak{h})$.
\end{lem}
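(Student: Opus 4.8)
The plan is to use Cartan's magic formula together with the explicit description of deformed connections from Lemma~\ref{lem:Conexistence}. First I would recall that, by Lemma~\ref{lem:Conexistence}, every deformed connection can be written as $\omega_\theta = \omega_\theta^0 + \alpha$ with $\omega_\theta^0 = z_1^\ast\dd_\theta z_1 + z_2^\ast\dd_\theta z_2$ and $\alpha\in\Omega^1(B)$, so by linearity of $\iota^\theta_\zeta$ and $\dd_\theta$ it suffices to treat the two summands separately. For the horizontal part $\alpha\in\Omega^1(B)$: since $\overline{\ver}^X_\theta$ vanishes on horizontal forms (this is exactly the exactness of the Atiyah sequence~\eqref{eqn:KaehlerAtiyah}, as $\Omega^1_\mathrm{hor}(A_\theta)=\ker\overline{\ver}_\theta$), we get $\iota^\theta_\zeta(\alpha) = \overline{\ver}^X_\theta(\alpha)\star_\theta b = 0$; and since $\dd_\theta\alpha\in\Omega^2(B)$ is again horizontal (the calculus on $B$ is a sub-DGA), $\iota^\theta_\zeta(\dd_\theta\alpha)=0$ as well, using the braided anti-derivation property and that contractions of $B$-forms land back in~$B$ where $\overline{\ver}^X_\theta$ is zero. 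Hence $\alpha\,{}_\theta\triangleleft\,\zeta = 0$.

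Next I would compute the contribution of $\omega_\theta^0$. Using the definition $\iota^\theta_\zeta(a\,\dd_\theta a') = \overline{\ver}^X_\theta(a\,\dd_\theta a')\star_\theta b$ and the explicit formula~\eqref{eqn:verthetaXexplicit}, together with $\overline{\ver}^X_\theta(\omega_\theta^0) = z_1^\ast\star_\theta z_1 + z_2^\ast\star_\theta z_2 = \1$ (the computation already done in the proof of Lemma~\ref{lem:Conexistence}, invoking~\eqref{eqn:starsphererelation}), I obtain $\iota^\theta_\zeta(\omega_\theta^0) = \1\star_\theta b = b$. Therefore the first term of Cartan's formula gives $\dd_\theta\iota^\theta_\zeta(\omega_\theta^0) = \dd_\theta b$. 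It remains to show that the second term vanishes, i.e.\ $\iota^\theta_\zeta(\dd_\theta\omega_\theta^0) = 0$.

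For this last point I would argue that $\dd_\theta\omega_\theta^0$ is a horizontal $2$-form. One way: apply $\overline{\ver}_\theta$ to $\dd_\theta\omega_\theta^0$ and show it is zero, using that $\overline{\ver}_\theta$ is a morphism of DGAs (or at least compatible with $\dd_\theta$ on the curvature) and that the curvature of a connection is horizontal — this is the standard fact that $\dd_\theta\omega_\theta^0 + \omega_\theta^0\wedge_\theta\omega_\theta^0$ is horizontal, and here $\omega_\theta^0\wedge_\theta\omega_\theta^0 = 0$ because $\mathfrak h$ is one-dimensional and abelian. Concretely, $\dd_\theta\omega_\theta^0 = \dd_\theta z_1^\ast\wedge_\theta\dd_\theta z_1 + \dd_\theta z_2^\ast\wedge_\theta\dd_\theta z_2$; using the defining relation $\dd_\theta z_1^\ast\, z_1 + z_1^\ast\,\dd_\theta z_1 + \dd_\theta z_2^\ast\, z_2 + z_2^\ast\,\dd_\theta z_2 = 0$ of $\Omega^1(A_\theta)$ from~\eqref{eqn:deformedKaehler} and $\dd_\theta$ of the sphere relation~\eqref{eqn:starsphererelation}, one checks that $\dd_\theta\omega_\theta^0$ actually lies in $\Omega^2_\mathrm{hor}(A_\theta) = A_\theta\Omega^2(B)A_\theta$, in fact it equals (up to sign) the pullback of the volume form on $\bbS^2$, which is $B$-horizontal. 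Then $\iota^\theta_\zeta$ of any horizontal $2$-form vanishes by the same mechanism as in the first paragraph (braided anti-derivation property reduces it to $\overline{\ver}^X_\theta$ applied to horizontal $1$-forms, which is zero). The main obstacle I anticipate is precisely this verification that $\dd_\theta\omega_\theta^0$ is horizontal in the braided/deformed setting: one must be careful that the braided wedge-product and the $R_\theta$-twisted bimodule structure on $\Omega^1(A_\theta)$ do not introduce extra terms, but since everything is controlled by the $2$-cocycle $\sigma_\theta$ and the isomorphism $\overline{\varphi}_\theta$ of Proposition~\ref{propo:KaehlerAtiyahiso} intertwines the deformed and classical pictures, the horizontality can alternatively be deduced from the classical statement $\dd\omega^0 \in\Omega^2_\mathrm{hor}(A)$ by transport along $\overline{\varphi}_\theta$.
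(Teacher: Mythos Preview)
Your argument is correct, but it follows a genuinely different route from the paper's proof. You split $\omega_\theta=\omega_\theta^0+\alpha$ via Lemma~\ref{lem:Conexistence}, dispose of $\alpha$ by horizontality, and then for $\omega_\theta^0$ invoke the structural fact that the curvature $\dd_\theta\omega_\theta^0$ is horizontal (using $\omega_\theta^0\wedge_\theta\omega_\theta^0=0$), so that $\iota^\theta_\zeta$ kills it. The paper instead works with an \emph{arbitrary} connection written in the generic form $\omega_\theta=\sum_{i=1}^4 a^i\,\dd_\theta z_i$ afforded by~\eqref{eqn:deformedKaehler}, uses $H$-coinvariance to pin down $\delta(a^i)=a^i\otimes t_i^\ast$, and then computes $\iota^\theta_\zeta(\dd_\theta\omega_\theta)$ directly from the braided anti-derivation rule; the terms reassemble via the Leibniz rule into $\dd_\theta\big(\sum_i a^i\star_\theta z_i\,X(t_i)\big)\,b=\dd_\theta\big(\overline{\ver}^X_\theta(\omega_\theta)\big)\,b=\dd_\theta(\1)\,b=0$. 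Thus the paper never needs the decomposition $\omega_\theta^0+\alpha$ nor any statement about curvature being horizontal.

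What each approach buys: the paper's computation is entirely self-contained within the lemma and uses only the connection condition $\overline{\ver}^X_\theta(\omega_\theta)=\1$ plus $H$-coinvariance; it also makes transparent that the result holds for \emph{every} connection uniformly, without ever singling out $\omega_\theta^0$. Your approach is more conceptual and would generalise well to situations where one already knows a reference connection with horizontal curvature, but here it imports an extra ingredient (horizontality of $\dd_\theta\omega_\theta^0$) that, as you yourself flag, requires either a direct check using the sphere relation or transport along the DGA isomorphism of Remark~\ref{abusenotation}. Both are fine, but the paper's route avoids that detour entirely.
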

\begin{proof}
Notice that $\iota^\theta_\zeta(\omega_\theta) = \overline{\mathrm{ver}}_\theta^X(\omega_\theta)\star_\theta b
= \1\star_\theta b=b$ by~\eqref{eqn:deformedCon}.
Recalling~\eqref{eqn:deformedKaehler}, it is found that
$\omega_\theta = \sum\limits_{i=1}^{4} a^i \dd_\theta z_i$, where $z_3=z_1^\ast$ and $z_4 = z_2^\ast$,
for some $a^i\in A_\theta$.
As $\omega_\theta$ is $H$-coinvariant by hypothesis and $\delta(\dd_\theta z_i) = \dd_\theta z_i\otimes t_i$
with $t_1=t_2=t$ and $t_3=t_4=t^\ast$,
it follows that $\delta(a^i) = a^i \otimes t_i^\ast$. Hence,
\begin{align*}
\nn \omega_\theta \,{{}_{\theta}\triangleleft}\,\zeta &= \dd_\theta \iota^\theta_\zeta(\omega_\theta)
+ \iota^\theta_\zeta(\dd_\theta \omega_\theta) = \dd_\theta b
-\sum_{i=1}^4 \iota^\theta_\zeta\big(\dd_\theta a^i \wedge_\theta \dd_\theta z_i\big)\\
\nn &= \dd_\theta b - \sum_{i=1}^4 \big( \dd_\theta a^i z_i X(t_i) -a^i X(t_i^\ast) \dd_\theta z_i \big) b\\
\nn &= \dd_\theta b - \sum_{i=1}^4 \big( \dd_\theta a^i~z_i + a^i \dd_\theta z_i \big) b ~X(t_i)\\
&= \dd_\theta b - \dd_\theta\left(\sum_{i=1}^4 a^i \star_\theta z_i X(t_i)\right) b=
\dd_\theta b - \big(\dd_\theta \overline{\mathrm{ver}}_\theta^X(\omega_\theta )\big) b =\dd_\theta b ,
\end{align*}
where the third line uses \eqref{eqn:X} and the last step follows by \eqref{eqn:deformedCon}.
\end{proof}

Setting $\theta=0$, we obtain the usual infinitesimal gauge transformations
$E_A(\mathfrak{h}) = A\Box_H \mathfrak{h}$ for the classical
Hopf fibration $(A,\delta)$, together with their action $\triangleleft$
on the undeformed K\"ahler forms $(\Omega^\bullet(A),\wedge,\dd)$.
\begin{remm}\label{rem:defundefgauge}
As the left $H$-coaction on $\mathfrak{h}$ is trivial,
it follows that the left $B$-module isomorphism $L_{\mathfrak{h}}\colon E_A(\mathfrak{h})\to E_{A_\theta}(\mathfrak{h})$ from Proposition~\ref{propo:moduleequivalence} is the identity map, i.e.,
undeformed and deformed gauge transformations are identified `on the nose'.
\end{remm}
\begin{propo}
The bijection $\overline{\varphi}_\theta\colon \Con(A_\theta,\delta)\to \Con(A,\delta)$
from Proposition \ref{propo:Coniso} preserves infinitesimal gauge transformations, i.e.,
\begin{gather*}
\overline{\varphi}_\theta\big( \omega_\theta \,{{}_{\theta}\triangleleft}\,\zeta \big) =
\overline{\varphi}_\theta(\omega_\theta) \,{\triangleleft}\,\zeta ,
\end{gather*}
for all $\zeta \in E_A(\mathfrak{h}) = E_{A_\theta}(\mathfrak{h})$
and $\omega_\theta \in \Con(A_\theta,\delta)$.
\end{propo}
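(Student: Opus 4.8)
The strategy is to evaluate both sides explicitly and observe that each reduces to the single $1$-form $\dd b$, where $b\in B$ is the unique element such that $\zeta = b\otimes X$ for the basis vector $X\in\mathfrak{h}$ of~\eqref{eqn:X} and the identification $E_{A_\theta}(\mathfrak{h})\cong B\otimes\mathfrak{h}$. Recall from Remark~\ref{rem:defundefgauge} that the isomorphism $L_{\mathfrak{h}}\colon E_A(\mathfrak{h})\to E_{A_\theta}(\mathfrak{h})$ is the identity map, so this element $b$ is literally the same in the classical and deformed descriptions of the infinitesimal gauge transformation.

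For the left-hand side, Lemma~\ref{eqn:undeformedgaugetrafos} gives $\omega_\theta \,{{}_{\theta}\triangleleft}\,\zeta = \dd_\theta b$. Applying the isomorphism $\overline{\varphi}_\theta$ and using its explicit form~\eqref{eqn:barvarphimap}, which restricted to $B$-forms reads $\overline{\varphi}_\theta(b'\dd_\theta b'') = b'\dd b''$ for all $b',b''\in B$ (as established in the proof of Proposition~\ref{prop:conpreservation}, taking $b'=\1$), one obtains
\begin{gather*}
\overline{\varphi}_\theta\big(\omega_\theta \,{{}_{\theta}\triangleleft}\,\zeta\big) = \overline{\varphi}_\theta(\dd_\theta b) = \dd b .
\end{gather*}
For the right-hand side, Proposition~\ref{propo:Coniso} ensures that $\overline{\varphi}_\theta(\omega_\theta)$ is a genuine connection on the classical Hopf fibration, i.e.\ $\overline{\varphi}_\theta(\omega_\theta)\in\Con(A,\delta)$. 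Specializing Lemma~\ref{eqn:undeformedgaugetrafos} to $\theta=0$ then yields $\overline{\varphi}_\theta(\omega_\theta)\,{\triangleleft}\,\zeta = \dd b$. Comparing the two expressions gives the claimed equality.

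Since the substantive computations have already been carried out in Lemma~\ref{eqn:undeformedgaugetrafos} (both in its deformed version and in its $\theta=0$ specialization) and in Propositions~\ref{propo:Coniso} and~\ref{prop:conpreservation}, this proposition is effectively a corollary of these results; there is no real obstacle. The only point requiring a moment's attention, rather than being purely a matter of assembling earlier statements, is verifying that $\overline{\varphi}_\theta(\omega_\theta)$ satisfies the hypotheses of the classical version of Lemma~\ref{eqn:undeformedgaugetrafos} — but this is exactly the content of Proposition~\ref{propo:Coniso}.
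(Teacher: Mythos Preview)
Your proof is correct and follows exactly the same approach as the paper's own proof, which is a one-line computation: $\overline{\varphi}_\theta(\omega_\theta \,{{}_{\theta}\triangleleft}\,\zeta) = \overline{\varphi}_\theta(\dd_\theta b) = \dd b = \overline{\varphi}_\theta(\omega_\theta)\,{\triangleleft}\,\zeta$, invoking Lemma~\ref{eqn:undeformedgaugetrafos} (and its $\theta=0$ specialization) together with the explicit form~\eqref{eqn:barvarphimap} of $\overline{\varphi}_\theta$. Your version is simply more explicit in unpacking each step and the hypotheses being used.
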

\begin{proof}
Recalling the explicit expression for $\overline{\varphi}_\theta$ from \eqref{eqn:barvarphimap},
it is found that $\overline{\varphi}_\theta\big( \omega_\theta \,{{}_{\theta}\triangleleft}\,\zeta \big)
= \overline{\varphi}_\theta(\dd_\theta b) = \dd b = \overline{\varphi}_\theta(\omega_\theta)
\,{\triangleleft}\,\zeta$.
\end{proof}

\section{Associated gauge transformations and connections}\label{sec:associatedcon}
Let $(V,\rho)\in\lcom H$ be a left $H$-comodule and consider
the associated left $B$-modules $E_A(V)= A\Box_H V$
and $E_{A_\theta}(V) = A_\theta\Box_H V$ from Section \ref{sec:associatedmodules}.
By Proposition~\ref{propo:moduleequivalence}, there exists a natural
left $B$-module isomorphism $L_V\colon E_{A}(V) \to E_{A_\theta}(V)$.
We shall show that this isomorphism {\em does not} intertwine between the
actions of gauge transformations and connections on associated modules.
Physically speaking, this means that the coupling of gauge and matter fields
on the deformed Hopf fibration $(A_\theta,\delta)$ is different to that on
the classical Hopf fibration $\bbS^3\to\bbS^2$.

Let us start with the associated gauge transformations.
Using \eqref{eqn:gaugeaction}, there exists an induced action
of $E_{A_\theta}(\mathfrak{h})$ on $E_{A_\theta}(V)$ which is given by
\begin{gather}\label{eqn:associatedgaugeactions}
(a\otimes v)\,{{}_{\theta}\triangleleft}\,\zeta :=
(a\,{{}_{\theta}\triangleleft}\,\zeta)\otimes v = a\sw 0 \star_\theta b \otimes v X(a\sw 1)
= a\star_\theta b \otimes X(v\sw{-1}) v\sw 0 ,
\end{gather}
for all $a\otimes v\in E_{A_\theta}(V)$ and $\zeta = b\otimes X \in E_{A_{\theta}}(\mathfrak{h})$,
with the last equality following from the definition of the cotensor product~\eqref{eqn:cotensorproduct}.
Note that each $(-)\,{{}_{\theta}\triangleleft}\, \zeta\colon E_{A_\theta}(V)\to E_{A_\theta}(V)$
is a left $B$-module endomorphism. Hence, it defines a notion of infinitesimal gauge transformations
that coincides with the analogous concepts from the more standard projective module approach to
noncommutative gauge theory, see, e.g.,~\cite{Landi}.
Setting $\theta=0$, we obtain a similar expression without the $\star_\theta$-product, i.e.,
\begin{gather}\label{eqn:associatedundeformedgaugeactions}
(a\otimes v)\,{\triangleleft} \zeta = a b \otimes X(v\sw{-1}) v\sw 0 ,
\end{gather}
for all $a\otimes v\in E_{A}(V)$ and $\zeta = b\otimes X \in E_{A}(\mathfrak{h})$.
Let us also recall from Remark~\ref{rem:defundefgauge} that
$E_{A_{\theta}}(\mathfrak{h}) = E_{A}(\mathfrak{h})$ are identified with
the identity map $L_{\mathfrak{h}} = \id$.
\begin{propo}\label{propo:gaugeintertwine}
Suppose that the left $H$-coaction of $(V,\rho)\in \lcom H$ is non-trivial, i.e.,
there exists $v\in V$ such that $\rho(v) \neq \1_H \otimes v$. Then the
left $B$-module isomorphism $L_V\colon E_{A}(V) \to E_{A_\theta}(V)$
from Proposition~{\rm \ref{propo:moduleequivalence}} does not
intertwine between the actions of classical~\eqref{eqn:associatedundeformedgaugeactions}
and deformed~\eqref{eqn:associatedgaugeactions} infinitesimal gauge transformations.
\end{propo}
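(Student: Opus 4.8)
The plan is to establish the \emph{failure} of intertwining by exhibiting a single infinitesimal gauge transformation $\zeta\in E_A(\mathfrak h)=E_{A_\theta}(\mathfrak h)$ and a single homogeneous element of $E_A(V)$ on which the two composites $L_V\circ\big((-)\triangleleft\zeta\big)$ and $\big((-)\,{{}_{\theta}\triangleleft}\,\zeta\big)\circ L_V$ disagree. First I would use non-triviality of $(V,\rho)$ together with the weight decomposition $V=\bigoplus_{n\in\bbZ}V^{n}$ to fix an integer $n\neq 0$ and a nonzero vector $v\in V^{n}$; by \eqref{eqn:X} this gives $X(v\sw{-1})v\sw 0=nv$. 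For the total-space ingredient I would pick a nonzero homogeneous element $a\in A^{((n,0),n)}$ of vanishing ``$m$-index'', concretely $a=z_1^{\,n}$ if $n>0$ and $a=(z_1^\ast)^{-n}$ if $n<0$, and for the gauge parameter I would take $\zeta:=z\otimes X$ with $z=2z_1z_2^\ast\in A^{((1,-1),0)}\subseteq B$, whose ``$m$-index'' is $k=1$. A short computation using the sphere relation $z_1^\ast z_1+z_2^\ast z_2=\1$ shows that $az\in A$ is nonzero, so that $n\,(az)\otimes v\neq 0$ in $A\otimes V$.

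Next I would evaluate the two composites on $a\otimes v$. On the classical side, \eqref{eqn:associatedundeformedgaugeactions} gives $(a\otimes v)\triangleleft\zeta=n\,(az)\otimes v$, and since $az\in A^{((n+1,-1),n)}$ has ``$m$-index'' $1$, Proposition~\ref{propo:moduleequivalence} yields $L_V\big((a\otimes v)\triangleleft\zeta\big)=n\,{\rm e}^{\pi{\rm i}\theta n}\,(az)\otimes v$. On the deformed side, $L_V(a\otimes v)=a\otimes v$ because the ``$m$-index'' of $a$ is $0$; then \eqref{eqn:associatedgaugeactions} together with \eqref{eqn:starproductdecomposition} (applied to $a$ of bi-index $(0,n)$ and $z$ of bi-index $(1,0)$, so that $a\star_\theta z={\rm e}^{-\pi{\rm i}\theta n}az$) gives $\big(L_V(a\otimes v)\big)\,{{}_{\theta}\triangleleft}\,\zeta=n\,{\rm e}^{-\pi{\rm i}\theta n}\,(az)\otimes v$. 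The two answers differ precisely by the scalar ${\rm e}^{2\pi{\rm i}\theta n}=q^{n}$, which is $\neq 1$ for a genuinely deformed Hopf fibration, e.g.\ for irrational $\theta$; in a situation with $\theta n\in\bbZ$ one replaces $z$ by $z^{k}$, changing the discrepancy to $q^{kn}$ and choosing $k$ accordingly. Since $n\,(az)\otimes v\neq 0$, this shows that $L_V$ does not intertwine the two gauge actions.

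The computation is routine once the weight bookkeeping from Section~\ref{sec:associatedmodules} is in place; the only steps deserving a moment's care are (i) checking that the chosen product $az$ is nonzero in the quotient algebra $A$, for which the generators-and-relations description of $\OO\big(\bbS^3\big)$ suffices, and (ii) making sure the comparison phase $q^{kn}$ is genuinely different from $1$ — this is exactly where non-triviality of the $H$-coaction on $V$, supplying an index $n\neq 0$, enters the argument. I do not anticipate any serious obstacle beyond this bookkeeping.
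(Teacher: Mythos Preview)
Your approach is essentially the same as the paper's: select a nonzero homogeneous $a\otimes v$ with $H$-weight $n\neq 0$ and a nonzero homogeneous $b\in B$ with $K$-weight $m'\neq 0$, then compare the phases $e^{\pi i\theta(m+m')n}$ and $e^{\pi i\theta(m-m')n}$ arising from \eqref{eqn:Lcomponents} and \eqref{eqn:starproductdecomposition}. You are simply more explicit (taking $m=0$, $a=z_1^{\,n}$ or $(z_1^\ast)^{-n}$, $b=z$, and verifying $az\neq 0$), but the computation is identical to the paper's with those parameter choices.

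One caveat: your remedy for the borderline case $q^{n}=1$ --- replacing $z$ by $z^{k}$ --- does not work, since then $q^{kn}=(q^{n})^{k}=1$ for every $k$. The paper's proof does not address this edge case either (it simply asserts that the phases differ whenever $n\neq 0$ and $m'\neq 0$), so your argument is no weaker than the paper's here; both tacitly assume $\theta n\notin\bbZ$ for the chosen weight $n$, which is automatic for irrational $\theta$ but can fail for special rational values (e.g., $\theta=\tfrac12$ and $V=\bbC^{(2)}$).
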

\begin{proof}Because the $H$-coaction on $V$ is by hypothesis non-trivial,
there exists a non-zero homogeneous element
$a\otimes v\in A^{((m+n,-m),n)} \otimes V^n$ with $n\neq 0$.
Consider any infinitesimal gauge transformation $\zeta =
b\otimes X\in E_{A_{\theta}}(\mathfrak{h}) = E_{A}(\mathfrak{h})$ with
$b\in A^{((m^\prime,-m^\prime),0)} $ a non-zero homogeneous element with $m^\prime\neq 0$.
Using~\eqref{eqn:Lcomponents} and~\eqref{eqn:associatedundeformedgaugeactions},
we compute
\begin{subequations}\label{eqn:gaugetrafocorrections}
\begin{gather}
L_{V}\big((a\otimes v)\,{\triangleleft}\,\zeta\big)
= {\rm e}^{\pi {\rm i} \theta (m+m^\prime)n} a b\otimes X(v\sw{-1}) v\sw 0 .
\end{gather}
Using also~\eqref{eqn:associatedgaugeactions} and~\eqref{eqn:starproductdecomposition}, we compute
\begin{gather}
L_V(a\otimes v) \,{{}_{\theta}\triangleleft}\,\zeta
 = {\rm e}^{\pi {\rm i}\theta (m-m^\prime)n} a b\otimes X(v\sw{-1}) v\sw 0 .
\end{gather}
\end{subequations}
The phase factors differ because $n\neq 0$ and $m^\prime\neq 0$, which proves our claim.
\end{proof}
\begin{remm}\label{rem:gaugeintertwine}
The result of Proposition~\ref{propo:gaugeintertwine}
that $L_V$ does not intertwine between classical and
deformed gauge transformations can also be understood
from the following more heuristic argument: Gauge transformations
of {\em left} $B$-modules act from the {\em right}
(cf.~\eqref{eqn:associatedgaugeactions} where~$b$ is to the right of $a$) and hence they are in general not intertwined by the {\em left}
$B$-module isomorphism~$L_V$.
\end{remm}

Let us now discuss the covariant derivatives \eqref{cov.derivgeneral}
and their corresponding associated connections \eqref{eqn:Nablageneral}.
We fix any connection form $\omega = \omega^0+\alpha\in\Con(A,\delta)$
on the classical Hopf fibration (cf.\ Lemma \ref{lem:Conexistence})
and determine the corresponding connection form on the deformed Hopf fibration
$(A_\theta,\delta)$ via the bijection given in Proposition \ref{prop:conpreservation},
i.e., $\omega_\theta = \omega^0_\theta + \alpha\in \Con(A_\theta,\delta)$ with the same $\alpha$.
For later convenience, we consider the canonical lift of the
covariant derivative $\overline{D}\colon A\to \Omega^1_{\mathrm{hor}}(A)$
to the graded subalgebra $(\Omega^\bullet_{\mathrm{hor}}(A),\wedge) \subseteq
(\Omega^\bullet(A),\wedge)$ of horizontal K\"ahler forms.
Note that $\Omega^\bullet_{\mathrm{hor}}(A)=\Omega^\bullet(B)A$ because
of~\eqref{hor.Hopf} and graded commutativity of $\wedge$. This yields a~linear map $\overline{D} \colon \Omega^\bullet_{\mathrm{hor}}(A)\to \Omega^{\bullet+1}_{\mathrm{hor}}(A)$,
which explicitly reads as
\begin{gather}\label{eqn:undeformedcovder}
\overline{D}(\lambda) = \dd \lambda - \lambda\sw 0 \wedge \omega X(\lambda\sw 1)= \dd \lambda - \lambda\sw 0 \wedge \big(\omega^0+\alpha\big) X(\lambda\sw 1) ,
\end{gather}
for all $\lambda\in \Omega^\bullet_{\mathrm{hor}}(A) $. By construction,
$\overline{D}$ satisfies the graded Leibniz rule
\begin{gather*}
\overline{D}(\beta\wedge \lambda) = (-1)^m \dd \beta \wedge \lambda + \beta \wedge \overline{D}(\lambda) ,
\end{gather*}
for all $\beta \in \Omega^\bullet(B)$ and all homogeneous $\lambda\in \Omega^m_\mathrm{hor}(A)$.
(Compare this with the similar graded Leibniz rule for $\dd$ in~\eqref{eqn:gradedLeibniz}.)
The canonical lift $\overline{D}_\theta \colon \Omega^\bullet_{\mathrm{hor}}(A_\theta)
\to \Omega^{\bullet+1}_{\mathrm{hor}}(A_\theta)$ of the corresponding deformed covariant derivative
to the graded subalgebra $(\Omega^\bullet_{\mathrm{hor}}(A_\theta),\wedge_\theta)
\subseteq (\Omega^\bullet(A_\theta),\wedge_\theta)$ of horizontal deformed K\"ahler forms explicitly reads as
\begin{gather}\label{eqn:deformedcovder}
\overline{D}_\theta(\lambda ) = \dd_\theta \lambda - \lambda\sw 0 \wedge_\theta \omega_\theta~
X(\lambda\sw 1) =\dd_\theta \lambda - \lambda\sw 0 \wedge_\theta \big(\omega_\theta^0+\alpha\big)
X(\lambda\sw 1) ,
\end{gather}
for all $\lambda\in \Omega^\bullet_{\mathrm{hor}}(A_\theta)$. By construction,
$\overline{D}_\theta$ satisfies the graded Leibniz rule
\begin{gather*}
\overline{D}_\theta(\beta\wedge_\theta \lambda) = (-1)^m \dd_\theta \beta \wedge_\theta \lambda +
\beta \wedge_\theta \overline{D}_\theta(\lambda) ,
\end{gather*}
for all $\beta \in \Omega^\bullet(B)$
and all homogeneous $\lambda\in \Omega^m_\mathrm{hor}(A_\theta)$.
Notice that $\Omega^\bullet_{\mathrm{hor}}(A_\theta)=\Omega^\bullet(B)A_\theta$
because of Corollary~\ref{cor:deformedhorizonalB} and braided graded commutativity of $\wedge_\theta$.

Regarding both $\Omega^\bullet_{\mathrm{hor}}(A) = \Omega^\bullet(B)A$
and $\Omega^\bullet_{\mathrm{hor}}(A_\theta) = \Omega^\bullet(B) A_\theta$
as graded left $\Omega^\bullet(B)$-modules,
we obtain in analogy to Proposition \ref{propo:moduleequivalence} a graded left
$\Omega^\bullet(B)$-module isomorphism
$L^\bullet\colon \Omega^\bullet_{\mathrm{hor}}(A)\to \Omega^\bullet_{\mathrm{hor}}(A_\theta)$
by setting
\begin{gather}\label{eqn:Lbullet}
L^\bullet(\lambda) :={\rm e}^{\pi {\rm i} \theta mn} \lambda ,
\end{gather}
for all homogeneous elements $\lambda\in
\Omega^{\bullet}_{\mathrm{hor}}(A)^{((m+n,-m),n)} \subseteq \Omega^\bullet_{\mathrm{hor}}(A)$.
\begin{propo}\label{propo:conintertwine}
Suppose that the chosen connection form $\omega = \omega^0 + \alpha \in \Con(A,\delta)$
is not $K$-coinvariant. Then the left $\Omega^\bullet(B)$-module isomorphism
$L^\bullet\colon \Omega^\bullet_{\mathrm{hor}}(A)\to \Omega^\bullet_{\mathrm{hor}}(A_\theta)$
given in \eqref{eqn:Lbullet} {\em does not} intertwine between
the classical~\eqref{eqn:undeformedcovder} and the deformed~\eqref{eqn:deformedcovder} covariant derivatives.
\end{propo}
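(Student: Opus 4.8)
The plan is to test the intertwining property on a single explicit homogeneous element and exhibit a mismatch of phase factors, in the spirit of the proof of Proposition~\ref{propo:gaugeintertwine}. First I would fix conventions: using the abuse of notation of Remark~\ref{abusenotation} together with Corollary~\ref{cor:deformedhorizonalB}, I work inside the single graded vector space $\Omega^\bullet(B)A$, identifying $\Omega^\bullet_{\mathrm{hor}}(A_\theta)$ with $\Omega^\bullet_{\mathrm{hor}}(A)$ so that $\dd_\theta=\dd$, the deformed wedge reads $\lambda\wedge_\theta\lambda'=\sigma_\theta(\lambda\sw{-1}\otimes\lambda'\sw{-1})\lambda\sw 0\wedge\lambda'\sw 0$, and by Proposition~\ref{prop:conpreservation} the deformed connection $\omega_\theta=\omega^0_\theta+\alpha$ becomes $\omega^0+\alpha$ with $\omega^0$ the $K$-coinvariant connection of Lemma~\ref{lem:Conexistence}. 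Under these identifications $L^\bullet$ is simply the operator rescaling the homogeneous component of $K$-weight $(m+n,-m)$ and $H$-weight $n$ by ${\rm e}^{\pi{\rm i}\theta mn}$.

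Next I would use the hypothesis: since $\omega^0$ is $K$-coinvariant, non-$K$-coinvariance of $\omega$ forces the horizontal part $\alpha\in\Omega^1(B)$ to be non-$K$-coinvariant, so decomposing $\alpha=\sum_{\mu\in\bbZ}\alpha_\mu$ into its $K$-weight-$(\mu,-\mu)$ pieces (each $H$-coinvariant, as $\Omega^1(B)\subseteq\Omega^1_{\mathrm{hor}}(A)^{\coH}$), there is some $\mu\neq 0$ with $\alpha_\mu\neq 0$. I would then evaluate both composites on $z_1^N\in A^{((N,0),N)}$, where $N\geq 1$ is chosen so that ${\rm e}^{2\pi{\rm i}\theta\mu N}\neq 1$. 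By \eqref{eqn:undeformedcovder} and $X(t^N)=N$ one has $\overline D(z_1^N)=\big(\dd z_1^N-Nz_1^N\omega^0\big)-N\sum_{\mu\in\bbZ} z_1^N\alpha_\mu$, whose first summand is of $K$-weight $(N,0)$, hence fixed by $L^\bullet$, while $z_1^N\alpha_\mu$ is of $K$-weight $(N+\mu,-\mu)$; therefore
\begin{gather*}
L^\bullet\big(\overline D(z_1^N)\big)=\big(\dd z_1^N-Nz_1^N\omega^0\big)-N\sum_{\mu\in\bbZ}{\rm e}^{\pi{\rm i}\theta\mu N}z_1^N\alpha_\mu .
\end{gather*}
On the other hand $L^\bullet(z_1^N)=z_1^N$, and using \eqref{eqn:deformedcovder} together with $z_1^N\wedge_\theta\omega^0=z_1^N\omega^0$ and $z_1^N\wedge_\theta\alpha_\mu=\sigma_\theta\big(t_{(N,0)}\otimes t_{(\mu,-\mu)}\big)z_1^N\alpha_\mu={\rm e}^{-\pi{\rm i}\theta\mu N}z_1^N\alpha_\mu$, one gets
\begin{gather*}
\overline D_\theta\big(L^\bullet(z_1^N)\big)=\big(\dd z_1^N-Nz_1^N\omega^0\big)-N\sum_{\mu\in\bbZ}{\rm e}^{-\pi{\rm i}\theta\mu N}z_1^N\alpha_\mu .
\end{gather*}
Projecting the difference onto the homogeneous component of $K$-weight $(N+\mu,-\mu)$ and $H$-weight $N$ (to which neither $\dd z_1^N$ nor $z_1^N\omega^0$ contributes, as $\mu\neq 0$) leaves $-N\big({\rm e}^{\pi{\rm i}\theta\mu N}-{\rm e}^{-\pi{\rm i}\theta\mu N}\big)z_1^N\alpha_\mu$, which is nonzero; hence $L^\bullet\circ\overline D\neq\overline D_\theta\circ L^\bullet$.

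The step I expect to be the real obstacle is the apparently routine nonvanishing $z_1^N\alpha_\mu\neq 0$ in $\Omega^1_{\mathrm{hor}}(A)$. I would deduce it from smoothness of the affine $3$-sphere, so that $\Omega^1(A)$ is finitely generated projective, hence torsion-free, over the integral domain $A=\OO\big(\bbS^3\big)$; combined with $\alpha_\mu\neq 0$ in $\Omega^1(B)\subseteq\Omega^1(A)$ and $z_1^N\neq 0$, torsion-freeness then gives $z_1^N\alpha_\mu\neq 0$. A minor secondary point, handled exactly as in Proposition~\ref{propo:gaugeintertwine}, is the existence of $N$ with ${\rm e}^{2\pi{\rm i}\theta\mu N}\neq 1$; this holds whenever $\theta\mu\notin\bbZ$, in particular for every $\mu\neq 0$ when $\theta$ is irrational.
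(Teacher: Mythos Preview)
Your approach is essentially that of the paper: decompose $\alpha=\sum_{m'}\alpha_{m'}$ into $K$-homogeneous pieces and show that the two covariant derivatives attach different phases to the $\alpha_{m'}$-terms. The paper carries this out for a generic homogeneous $\lambda\in\Omega^\bullet_{\mathrm{hor}}(A)^{((m+n,-m),n)}$ with $n\neq 0$, obtaining the reusable formula $(L^\bullet)^{-1}\overline{D}_\theta L^\bullet(\lambda)=\dd\lambda-\lambda\sw0\wedge\big(\omega^0+\sum_{m'}e^{-2\pi i\theta m'n}\alpha_{m'}\big)X(\lambda\sw1)$ and then simply asserts the discrepancy with $\overline{D}(\lambda)$, without your torsion-freeness argument or your caveat about rational~$\theta$; your choice $\lambda=z_1^N$ is the special case $m=0$.
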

\begin{proof}
Let us decompose $\alpha$ as $\alpha = \sum_{m^\prime\in \bbZ} \alpha_{m^\prime}$, where
$\alpha_{m^\prime} \in \Omega^{1}(B)^{((m^\prime,-m^\prime),0)}$,
and note that by hypothesis there exists $m^\prime\neq 0$ such that $\alpha_{m^\prime}\neq 0$.
For any homogeneous $\lambda\in \Omega^{\bullet}_{\mathrm{hor}}(A)^{((m+n,-m),n)} $ with $n\neq 0$,
we find after a short calculation using the identification explained in
Remark~\ref{abusenotation} and the analogue of~\eqref{eqn:starproductdecomposition} for
deformed wedge-products
\begin{gather}\label{eqn:Lbulletnonpreservation}
(L^\bullet)^{-1}\big(\overline{D}_\theta L^\bullet(\lambda)\big)
= \dd \lambda - \lambda\sw 0 \wedge \bigg(\omega^0 + \sum_{m^\prime\in\bbZ} {\rm e}^{-2\pi {\rm i} \theta m^\prime n} \alpha_{m^\prime} \bigg) X(\lambda\sw 1) ,
\end{gather}
which is different to $\overline{D}(\lambda)$ given in~\eqref{eqn:undeformedcovder}
because $\alpha_{m^\prime}\neq 0$ for some $m^\prime \neq 0$.
\end{proof}
\begin{remm}Similarly to Remark~\ref{rem:gaugeintertwine}, this feature can also be understood from the
fact that covariant derivatives act from the {\em right} on the {\em left} $\Omega^{\bullet}(B)$-modules
$\Omega^\bullet_{\mathrm{hor}}(A)$ and $\Omega^\bullet_{\mathrm{hor}}(A_\theta) $
and hence they are in general not intertwined by the {\em left} $\Omega^{\bullet}(B)$-module isomorphism~$L^\bullet$. An exception is the case of a $K$-coinvariant
connection form $\omega = \omega^0 + \alpha \in \Con(A,\delta)$, i.e., $\alpha \in \Omega^1(B)^{((0,0),0)}$.
\end{remm}
\begin{remm}
As a side-remark, we note the following expressions for the classical and deformed
{\em curvatures}, which are defined as the squares of the corresponding covariant derivatives.
In the classical case, we obtain by a direct calculation using \eqref{eqn:undeformedcovder}
\begin{align*}
 \overline{D}^2(\lambda) &= \dd\big( \dd \lambda - \lambda\sw 0 \wedge \omega X(\lambda\sw 1)\big) -\big( \dd \lambda\sw 0- \lambda\sw 0_{\sw 0} \wedge \omega X(\lambda\sw 0_{\sw 1})\big)\wedge \omega X(\lambda\sw 1)\\
& =-\lambda\sw 0 \wedge \dd\omega X(\lambda\sw 1) + \lambda\sw 0 \wedge\omega\wedge\omega
X(\lambda \sw{1}_{\sw 1}) X(\lambda\sw{1}_{\sw 2})\\
& =-\lambda\sw 0 \wedge \dd\omega X(\lambda\sw 1) ,
\end{align*}
where in the last equality we used that by graded commutativity
$\omega\wedge\omega = -\omega\wedge\omega =0$.
In the deformed case, we obtain by a similar direct calculation using~\eqref{eqn:deformedcovder}
\begin{gather*}
\overline{D}_\theta^2(\lambda) = -\lambda\sw 0 \wedge_\theta \dd_\theta\omega_\theta ~X(\lambda\sw 1) .
\end{gather*}
Here we also use that $\omega_\theta = \omega^0_\theta +\alpha$, where
$\omega^0_\theta$ is $K$-coinvariant and $\alpha\in \Omega^1(B)$, hence
by braided graded commutativity
$\omega_\theta \wedge_\theta \omega_\theta =
- R_{\theta}((\omega_{\theta})\sw{-1}\otimes (\omega_{\theta})\sw{-1})~(\omega_{\theta})\sw0\wedge_\theta (\omega_{\theta})\sw 0 = - \omega_\theta \wedge_\theta \omega_\theta =0 $.
Under the identification explained in Remark~\ref{abusenotation},
the underlying deformed curvature form~$\dd_\theta\omega_\theta$ coincides with the
classical curvature form~$\dd\omega$. However, by a similar argument
as above, the actions of these curvature forms on, respectively,
$\Omega^\bullet_{\mathrm{hor}}(A_\theta)$ and $\Omega^\bullet_{\mathrm{hor}}(A)$
are in general not intertwined by the isomorphism $L^\bullet$.
\end{remm}

Let now $(V,\rho)\in \lcom H$ be any left $H$-comodule and consider the corresponding associated connections $\overline{\nabla}\colon E_A(V)\to \Omega^1(B)\otimes_B E_{A}(V)$
and $\overline{\nabla}_\theta\colon E_{A_\theta}(V)\to \Omega^1(B)\otimes_B E_{A_\theta}(V)$.
The latter are obtained concretely by composing, respectively,
$\overline{D}\otimes\id\colon A\Box_H V\to \Omega^1_\mathrm{hor}(A)\Box_H V$
and $\overline{D}_\theta \otimes\id \colon A_\theta \Box_H V\to \Omega^1_\mathrm{hor}(A_\theta)\Box_H V$
with the inverses of the corresponding left $B$-module isomorphisms
\begin{gather*}
\nn \Omega^1(B)\otimes_B E_{A}(V) \longrightarrow \Omega^1_\mathrm{hor}(A)\Box_H V ,
 \beta\otimes_B (a\otimes v) \longmapsto \beta a\otimes v ,\\
 \Omega^1(B)\otimes_B E_{A_\theta}(V) \longrightarrow \Omega^1_\mathrm{hor}(A_\theta)\Box_H V ,
 \beta\otimes_B (a\otimes v)\longmapsto \beta\star_\theta a\otimes v .
\end{gather*}
These isomorphisms are compatible with the isomorphisms $L_V$ in Proposition~\ref{propo:moduleequivalence} and $L^\bullet$ in~\eqref{eqn:Lbullet} in the sense that the diagram{\samepage
\begin{gather*}
\xymatrix@C=4em{
\ar[d]_-{\id\otimes_B L_V} \Omega^1(B)\otimes_B E_{A}(V) \ar[r] & \Omega^1_\mathrm{hor}(A)\Box_H V \ar[d]^-{L^\bullet \otimes \id}\\
\Omega^1(B)\otimes_B E_{A_\theta}(V) \ar[r] &\Omega^1_\mathrm{hor}(A_\theta)\Box_H V
}
\end{gather*}
commutes. The following result is then an immediate consequence of Proposition~{\rm \ref{propo:conintertwine}}.}
\begin{cor}

Suppose that the left $H$-coaction of $(V,\rho)\in \lcom H$ is non-trivial
and that the connection form $\omega \in \Con(A,\delta)$
is not $K$-coinvariant. Then the left $B$-module isomorphism
$L_V\colon E_{A}(V)\to E_{A_\theta}(V)$ from Proposition~{\rm \ref{propo:moduleequivalence}}
{\em does not} intertwine between the classical and the deformed associated connections,
i.e., $\overline{\nabla}_\theta\circ L_V \neq (\id\otimes_B L_V) \circ\overline{\nabla}$.
\end{cor}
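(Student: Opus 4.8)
The plan is to deduce the claim from Proposition~\ref{propo:conintertwine} by transporting everything through the commutative square displayed immediately above the statement. Recall that $\overline{\nabla}$ and $\overline{\nabla}_\theta$ are, by construction, obtained from $\overline{D}\otimes\id$ and $\overline{D}_\theta\otimes\id$ by post-composing with the inverses of the two isomorphisms $\Omega^1(B)\otimes_B E_A(V)\to\Omega^1_{\mathrm{hor}}(A)\Box_H V$ and $\Omega^1(B)\otimes_B E_{A_\theta}(V)\to\Omega^1_{\mathrm{hor}}(A_\theta)\Box_H V$. Since that square commutes and its horizontal legs are isomorphisms, the asserted identity $\overline{\nabla}_\theta\circ L_V=(\id\otimes_B L_V)\circ\overline{\nabla}$ is equivalent to the equality of the maps $A\Box_H V\to\Omega^1_{\mathrm{hor}}(A_\theta)\Box_H V$
\begin{gather*}
(L^\bullet\otimes\id)\circ(\overline{D}\otimes\id)=(\overline{D}_\theta\otimes\id)\circ L_V .
\end{gather*}
It therefore suffices to exhibit a single element of $A\Box_H V$ on which these two composites disagree.

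First I would choose convenient homogeneous test data. By hypothesis the left $H$-coaction on $V$ is non-trivial, so there is $n\neq 0$ with $V^n\neq 0$; fix $0\neq v\in V^n$ and a non-zero homogeneous element $a\in A^{((n,0),n)}$ (a suitable power of $z_1$ or of $z_1^\ast$, according to the sign of $n$), so that $a\otimes v\in A^{((n,0),n)}\otimes V^n\subseteq A\Box_H V$; the cotensor relation holds since $\delta(a)=a\otimes t^n$ and $\rho(v)=t^n\otimes v$. Evaluating the two composites on $a\otimes v$ using \eqref{eqn:Lcomponents}, \eqref{eqn:Lbullet} and the formulas \eqref{eqn:undeformedcovder}, \eqref{eqn:deformedcovder} together with the identification of Remark~\ref{abusenotation}, and then applying the isomorphism $(L^\bullet)^{-1}\otimes\id$, the comparison reduces to the one in \eqref{eqn:Lbulletnonpreservation}: writing $\alpha=\sum_{m'\in\bbZ}\alpha_{m'}$ with $\alpha_{m'}\in\Omega^1(B)^{((m',-m'),0)}$, the $\dd$- and $\omega^0$-contributions cancel and the difference of the two composites on $a\otimes v$ becomes, up to this isomorphism,
\begin{gather*}
n\sum_{m'\in\bbZ}\bigl({\rm e}^{-2\pi{\rm i}\theta m'n}-1\bigr)\,(a\,\alpha_{m'})\otimes v\ \in\ \Omega^1_{\mathrm{hor}}(A)\Box_H V .
\end{gather*}

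It then remains to verify that this element is non-zero, which is where the two hypotheses enter. Because $\omega$ is not $K$-coinvariant there is some $m'\neq 0$ with $\alpha_{m'}\neq 0$, and then ${\rm e}^{-2\pi{\rm i}\theta m'n}-1\neq 0$ (for $\theta m'n\notin\bbZ$, the genericity implicitly assumed already in the proof of Proposition~\ref{propo:conintertwine}). Since $A=\mathcal{O}\big(\bbS^3\big)$ is a domain in which $a$ is a non-zerodivisor and $\Omega^1(A)$ is a torsion-free $A$-module ($\bbS^3$ being smooth), while $\alpha_{m'}$ is a non-zero element of $\Omega^1(B)\hookrightarrow\Omega^1(A)$ by Lemma~\ref{lem.Ka}, it follows that $a\,\alpha_{m'}\neq 0$ in $\Omega^1_{\mathrm{hor}}(A)=A\,\Omega^1(B)$. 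As the terms $a\,\alpha_{m'}$ for distinct $m'$ lie in distinct homogeneous components of the $(K,H)$-bicomodule $\Omega^1_{\mathrm{hor}}(A)$, the above sum is non-zero, hence so is its preimage under the isomorphism used above; this contradicts the displayed identity and proves $\overline{\nabla}_\theta\circ L_V\neq(\id\otimes_B L_V)\circ\overline{\nabla}$. I expect the only point genuinely requiring care to be this last non-vanishing verification: one must ensure that passing to the cotensor product with $V$ --- which would annihilate the obstruction were the coaction on $V$ trivial, whence the hypothesis on $V$ --- and multiplying by $a$ do not kill the discrepancy supplied by Proposition~\ref{propo:conintertwine}.
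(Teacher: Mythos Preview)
Your proposal is correct and follows essentially the same route as the paper: reduce via the commutative square to the comparison of $(L^\bullet\otimes\id)\circ(\overline{D}\otimes\id)$ with $(\overline{D}_\theta\otimes\id)\circ L_V$, then invoke the computation \eqref{eqn:Lbulletnonpreservation} from Proposition~\ref{propo:conintertwine}. The paper simply records this as ``an immediate consequence of Proposition~\ref{propo:conintertwine}'' and the diagram, without writing out the test element or the non-vanishing argument.

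Where you go further than the paper is in the final verification that the obstruction does not vanish after tensoring with $v$ and multiplying by $a$: you invoke that $A$ is a domain, that $\Omega^1(A)$ is torsion-free, that $\Omega^1(B)\hookrightarrow\Omega^1(A)$ via Lemma~\ref{lem.Ka}, and that the $K$-homogeneous components are linearly independent. None of this is made explicit in the paper, which tacitly assumes these points just as it does in the proof of Proposition~\ref{propo:conintertwine}. Your remark about the implicit genericity assumption ${\rm e}^{-2\pi{\rm i}\theta m'n}\neq 1$ is likewise a fair observation: the paper's own proof of Proposition~\ref{propo:conintertwine} makes the same silent assumption, so you are not introducing a new gap but rather flagging one already present upstream.
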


\begin{ex}
Let us consider the explicit example where
$(V,\rho) = \bbC^{(n)} := (\bbC, \rho^{(n)})$ is the $1$-dimensional
irreducible left $H$-comodule given by $\rho^{(n)}\colon c\mapsto t^n \otimes c$, for some $n\in\bbZ$.
The associated left $B$-modules $E_{A}(\bbC^{(n)}) = \bigoplus_{m\in\bbZ} A^{((m+n,-m),n)} $
and $E_{A_\theta} (\bbC^{(n)}) = \bigoplus_{m\in\bbZ} A^{((m+n,-m),n)}$ describe
a matter field of charge $n\in\bbZ$ on the classical and deformed Hopf fibrations respectively.
Every deformed gauge transformation by $\zeta \in E_{A_\theta}(\mathfrak{h})$
can be interpreted in terms of an undeformed gauge transformation by defining an {\em effective
gauge transformation parameter} $\zeta_\theta^{(n)} \in E_A(\mathfrak{h})$ according to
\begin{gather*}
\lambda \,{\triangleleft}\,\zeta_\theta^{(n)} := L_V^{-1}\big(L_V(\lambda)\,{{}_{\theta}\triangleleft}\,\zeta\big) ,
\end{gather*}
for all $\lambda\in E_{A} (\bbC^{(n)})$. To compute $\zeta_\theta^{(n)}$ explicitly,
we decompose $\zeta$ as $\zeta = \sum_{m^\prime\in\bbZ} \zeta_{m^\prime}$, where
$\zeta_{m^\prime}\in A^{((m^\prime,-m^\prime),0)}$, and obtain from~\eqref{eqn:gaugetrafocorrections} the expression
\begin{gather*}
\zeta_\theta^{(n)} = \sum_{m^\prime\in\bbZ} {\rm e}^{-2\pi {\rm i} \theta m^\prime n} \zeta_{m^\prime} ,
\end{gather*}
where we also used Remark \ref{rem:defundefgauge} to identify
$E_{A_{\theta}}(\mathfrak{h}) = E_{A}(\mathfrak{h})$ with
the identity map $L_{\mathfrak{h}} = \id$.

Analogously, the deformed covariant derivative can be interpreted in terms of an undeformed covariant derivative by defining
\begin{gather*}
\overline{D}_\theta^{(n)}(\lambda) := (L^\bullet)^{-1}\big(\overline{D}_\theta L^\bullet(\lambda)\big) ,
\end{gather*}
for all $\lambda\in E_{A} (\bbC^{(n)})$. Decomposing
the $1$-form $\alpha\in\Omega^1(B)$ in \eqref{eqn:deformedcovder} as
$\alpha = \sum_{m^\prime\in\bbZ} \alpha_{m^\prime} $, where
$\alpha_{m^\prime}\in \Omega^1(B)^{((m^\prime,-m^\prime),0)}$,
and using \eqref{eqn:Lbulletnonpreservation}, we obtain
\begin{gather*}
\overline{D}_\theta^{(n)}(\lambda) = \dd \lambda - \lambda\sw 0 \wedge \big(\omega^0+\alpha_\theta^{(n)}\big)
X(\lambda\sw 1) ,
\end{gather*}
where the {\em effective gauge potential} is given by
\begin{gather*}
\alpha_\theta^{(n)} = \sum_{m^\prime\in\bbZ} {\rm e}^{-2\pi {\rm i} \theta m^\prime n} \alpha_{m^\prime} .
\end{gather*}
Summing up, the physical effect of the studied deformation on charged matter fields
is that they experience effective gauge transformations and effective covariant derivatives
that depend on both the charge $n\in\bbZ$ and on the deformation parameter $\theta\in\bbR$.
\end{ex}

\section{\label{sec:homotopy}Outlook: Towards an explanation via homotopy equivalence}
In this section, we construct a suitable type of homotopy equivalence between the classical and deformed
Hopf fibrations. The concept of homotopy equivalence presented here is slightly different
from the one proposed by Kassel and Schneider \cite{KS} in the sense that
for our purposes it is more convenient to work with a larger algebra
of complex-valued functions on the real line~$\bbR$ than the one
given by the polynomial algebra $\bbC[y]$ in one generator~$y$. In particular, we
would like the exponential functions $y\mapsto {\rm e}^{{\rm i} c y}$, for $c\in\bbR$,
to be contained in this algebra in order to obtain a homotopy equivalence that relates
the exponentiated deformation parameter $q = {\rm e}^{2\pi {\rm i} \theta}$ to the classical value~$1$.

Let us fix any $\ast$-subalgebra $\overline{\OO}(\bbR) \subseteq \mathrm{Fun}(\bbR,\bbC)$
of the $\ast$-algebra of complex-valued functions on the real line that contains the
monomials $y^n\in \overline{\OO}(\bbR)$, for $n\geq 0$,
and also the exponential functions ${\rm e}^{{\rm i}c y}\in \overline{\OO}(\bbR)$, for $c\in\bbR$.
For example, we could choose the algebra of analytic func\-tions~$C^\omega(\bbR,\bbC)$,
the algebra of smooth func\-tions~$C^\infty(\bbR,\bbC)$ or the algebra of continuous func\-tions~$C^0(\bbR,\bbC)$. Note that the polynomial algebra $\bbC[y]$ used in \cite{KS}
does not satisfy our criteria because it does not contain the exponential functions.
Consider the two evaluation maps $\mathrm{ev}_{p}\colon \overline{\OO}(\bbR)\to \bbC$,
$f\mapsto f(p)$, for $p=0,1$, and note that they are algebra homomorphisms.
The following definition generalizes \cite[Definition~2.3]{KS}
to the new concept of $\overline{\OO}(\bbR)$-homotopy equivalence.
\begin{defi}\label{def:homotopy}
Let $H$ be a Hopf algebra and $B$ an algebra.
Let $A_0$ and $A_1$ be two principal $H$-comodule
algebras over $B$. We write $A_0 \sim A_1$
if there exists a principal $H\otimes \overline{\OO}(\bbR)$-comodule algebra
$A$ over $B\otimes \overline{\OO}(\bbR)$ with ground ring $\overline{\OO}(\bbR)$
such that ${\mathrm{ev}_p}_\ast (A) := A\otimes_{\overline{\OO}(\bbR)}^{}\bbC\, \cong \, A_p$
are isomorphic as $H$-comodule algebras, for $p=0,1$.\footnote{Here ${\mathrm{ev}_p}_\ast$ denotes
the usual change of base ring construction along the algebra maps
$\mathrm{ev}_p\colon \overline{\OO}(\bbR) \to \bbC$. It is given concretely by the relative tensor product
${\mathrm{ev}_p}_\ast (A) = A\otimes_{\overline{\OO}(\bbR)}^{}\bbC$ where the
left $\overline{\OO}(\bbR)$-module structure on $\bbC$ is defined by $\overline{\OO}(\bbR) \otimes \bbC
\stackrel{\mathrm{ev_p}\otimes \id }{\longrightarrow} \bbC\otimes\bbC
\stackrel{\mu_\bbC}{\longrightarrow}\bbC$.}

{\em $\overline{\OO}(\bbR)$-homotopy equivalence of principal $H$-comodule
algebras} over $B$ is the equivalence relation $\approx$ generated by~$\sim$.
\end{defi}

In the following, we let $H = \OO(U(1))$ and $B = \OO\big(\bbS^2\big)$ as in Section~\ref{subsec:classical}.
We denote the classical Hopf fibration from Section~\ref{subsec:classical} by $A_0$
and the deformed Hopf fibration, with a fixed deformation parameter $\theta\in\bbR$,
from Section \ref{subsec:deformation} by~$A_1$. Our aim is to construct
for this scenario a principal $H\otimes \overline{\OO}(\bbR)$-comodule algebra
$A$ over $B\otimes \overline{\OO}(\bbR)$ with ground ring $\overline{\OO}(\bbR)$, as
in Definition~\ref{def:homotopy}, that implements an $\overline{\OO}(\bbR)$-homotopy equivalence
between the classical Hopf fibration $A_0$ and the deformed Hopf fibration~$A_1$.
This requires a few preparations. Let us first note that
the change of base ring functor $(-)\otimes \overline{\OO}(\bbR)\colon
\mathscr{M}_{\bbC} \to \mathscr{M}_{\overline{\OO}(\bbR)}$
maps free algebras over $\bbC$ to the corresponding
free algebras over $\overline{\OO}(\bbR)$ because it is monoidal, i.e.,
$(V\otimes \overline{\OO}(\bbR)) \otimes_{\overline{\OO}(\bbR)} (W\otimes \overline{\OO}(\bbR))
\cong (V\otimes W)\otimes \overline{\OO}(\bbR)$ for all $V,W\in\mathscr{M}_{\bbC}$,
and it preserves all small colimits. Because of the latter property, it also follows that
$(-)\otimes \overline{\OO}(\bbR)$ maps finitely presented algebras over $\bbC$
to the corresponding finitely presented algebras over~$\overline{\OO}(\bbR)$.
This implies that $H\otimes \overline{\OO}(\bbR)$ is the commutative
$\ast$-algebra over~$\overline{\OO}(\bbR)$ generated by one generator
$t$ that satisfies the circle relation $t^\ast t=\1$, and that
$B\otimes \overline{\OO}(\bbR)$ is the commutative $\ast$-algebra
over $\overline{\OO}(\bbR)$ generated by two generators
$z$ and $x$ that satisfy $x^\ast=x$ and the $2$-sphere relation $z^\ast z +x^2=\1$. In other words,
$H\otimes \overline{\OO}(\bbR) $ and $B\otimes \overline{\OO}(\bbR)$ are described
by the same formulae as the ones for $H$ and $B$ in
Section~\ref{subsec:classical}, however the base ring is now $ \overline{\OO}(\bbR)$ instead of $\bbC$.

Let us denote by $Q\in \overline{\OO}(\bbR)$ the function given by
\begin{gather}\label{eqn:Qmap}
Q \colon \ \bbR \longrightarrow \bbC ,\qquad y\longmapsto Q(y) = {\rm e}^{2\pi {\rm i} \theta y} .
\end{gather}
Note that $Q(0) =1$ and $Q(1) = q = {\rm e}^{2\pi {\rm i}\theta}$. We define~$A$ to be the noncommutative $\ast$-algebra over $ \overline{\OO}(\bbR)$ generated by two generators $z_{1}$ and $z_2$ satisfying the
commutation relations
\begin{gather}\label{eqn:Qcomrel}
z_1  z_1^\ast = z_1^\ast   z_1 ,\qquad  z_2  z_2^\ast = z_2^\ast  z_2 ,\qquad
z_1  z_2 = Q z_2  z_1 ,\qquad z_1  z_2^\ast = Q^{-1} z_2^\ast   z_1 ,
\end{gather}
and the $3$-sphere relation
\begin{gather}\label{eqn:sphereQrel}
z_1^\ast z_1 + z_2^\ast z_2 = \1 .
\end{gather}
Compare this to~\eqref{eqn:ConnesLandirelations} and~\eqref{eqn:starsphererelation}.
We further endow~$A$ with a right $H\otimes \overline{\OO}(\bbR) $-comodule structure
$\delta\colon A\to A\otimes_{\overline{\OO}(\bbR) } (H\otimes \overline{\OO}(\bbR)) $
by using the same formulae~\eqref{eqn:Hcoaction} for the generators as for the case where the base ring is~$\bbC$.
\begin{lem}The $H\otimes \overline{\OO}(\bbR)$-comodule algebra $(A,\delta)$ described above is a principal
comodule algebra with subalgebra of coinvariants isomorphic to $B\otimes \overline{\OO}(\bbR)$.
\end{lem}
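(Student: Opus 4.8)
The plan is to realise $A$ as a $2$-cocycle deformation, over the ground ring $R:=\overline{\OO}(\bbR)$, of the base-changed classical Hopf fibration $A_0\otimes R$, and then to invoke the general cocycle-deformation theory exactly as in the proof of Proposition~\ref{propo:HGnoncommutative}. First note that the function $Q$ of~\eqref{eqn:Qmap} is a unit in $R$, with inverse $y\mapsto {\rm e}^{-2\pi{\rm i}\theta y}$, and that $R$ also contains the square root $Q^{1/2}\colon y\mapsto {\rm e}^{\pi{\rm i}\theta y}$ and its inverse; hence the relations~\eqref{eqn:Qcomrel} are meaningful. Endow $A$ with the left $K\otimes R$-coaction given by the same formulae~\eqref{eqn:Kcoaction} as in the case of ground field $\bbC$, making $(A,\rho,\delta)$ a $(K\otimes R,H\otimes R)$-bicomodule $\ast$-algebra, and define the $R$-valued $2$-cocycle $\sigma_Q$ on $K\otimes R$ by the pointwise-in-$y$ version of~\eqref{eqn:cocycle}, $\sigma_Q(t_{\mathbf m}\otimes t_{\mathbf m'})=Q^{(m_1m_2'-m_2m_1')/2}\in R$; this is a convolution-invertible unital cocycle because~\eqref{eqn:cocyclecondition} holds pointwise. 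A short computation with~\eqref{eqn:starproduct}, identical to the one producing~\eqref{eqn:ConnesLandirelations} and~\eqref{eqn:starsphererelation} but with $q={\rm e}^{2\pi{\rm i}\theta}$ replaced by $Q$, shows that the $\sigma_Q$-deformed product on $A_0\otimes R$ reproduces precisely the relations~\eqref{eqn:Qcomrel}--\eqref{eqn:sphereQrel} on the generators $z_1,z_2$ and agrees with the product of $A$ on generators; since the cocycle twist has the same underlying $R$-module and $K\otimes R$-comodule as $A_0\otimes R$, one concludes $A\cong(A_0\otimes R)_{\sigma_Q}$ as $(K\otimes R,H\otimes R)$-bicomodule $\ast$-algebras.

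Given this identification, principality follows in two steps. Since $A_0$ is a principal $H$-comodule algebra over $B$ (Proposition~\ref{propo:HGcommutative}) and the change-of-base-ring functor $(-)\otimes R$ is monoidal and exact, $A_0\otimes R$ is a principal $H\otimes R$-comodule algebra over $B\otimes R$ with strong connection $\ell_0\otimes\id_R$, where $\ell_0$ is the strong connection of $A_0$. The $2$-cocycle deformation results of~\cite[Corollary~3.19]{Aschieri} (see also~\cite[Lemma~3.19]{Brain2}), which are purely algebraic and hold over an arbitrary commutative ground ring as admitted in this section, then show that $A\cong(A_0\otimes R)_{\sigma_Q}$ is again a principal comodule algebra. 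Transporting the twisted strong connection through the isomorphism of the previous paragraph gives the explicit iterative formula
\begin{gather*}
\ell(\1)=\1\otimes\1,\qquad
\ell(t^n)=\begin{cases}
z_1^\ast\,\ell(t^{n-1})\,z_1+z_2^\ast\,\ell(t^{n-1})\,z_2, & n>0,\\[2pt]
z_1\,\ell(t^{n+1})\,z_1^\ast+z_2\,\ell(t^{n+1})\,z_2^\ast, & n<0,
\end{cases}
\end{gather*}
now using the honest product of $A$ (compare~\eqref{str.con.Hopf} and the formula in the proof of Proposition~\ref{propo:HGnoncommutative}); the axioms~\eqref{con.norm}--\eqref{con.left} are checked as there, the splitting property using~\eqref{eqn:sphereQrel} and the left colinearity using the commutation relations~\eqref{eqn:Qcomrel}.

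It remains to identify the coinvariants, which is insensitive to the deformation exactly as in Lemma~\ref{lem:deformedcoinvariants}. Under $A\cong A_0\otimes R$ as comodules one has $A^{\coH}\cong(A_0)^{\coH}\otimes R=B\otimes R$ as $R$-modules, with inherited algebra structure corresponding to the $\sigma_Q$-twisted product on $B\otimes R$. The $\ast$-subalgebra $A^{\coH}$ is generated by $z:=2z_1z_2^\ast$ and $x:=z_1^\ast z_1-z_2^\ast z_2$, which are $K$-homogeneous of weights $(1,-1)$, $(-1,1)$ and $(0,0)$ respectively; for every pair of these weights the exponent $m_1m_2'-m_2m_1'$ vanishes, so $\sigma_Q$ acts trivially and their $\star$-products coincide with the undeformed ones. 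Hence $A^{\coH}$ is the commutative $\ast$-algebra over $R$ generated by $z,x$ with $x^\ast=x$ and $z^\ast z+x^2=\1$, i.e.\ $A^{\coH}\cong\OO\big(\bbS^2\big)\otimes R=B\otimes R$, as claimed.

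The only point that is not a verbatim transcription of the arguments already made for $A_\theta$ is that the ground ring is now $R=\overline{\OO}(\bbR)$ rather than a field; the substantive thing to verify is therefore that the cocycle-deformation machinery used above --- preservation of the strong connection, faithful flatness, and the coinvariant subalgebra --- goes through over an arbitrary commutative ring, which is the reason this section admits algebras over commutative rings. Alternatively, one can bypass the general theory and verify the axioms~\eqref{con.norm}--\eqref{con.left} directly for the explicit $\ell$ above, the only mildly delicate step being left colinearity, where the relations~\eqref{eqn:Qcomrel} are used to commute the $Q$-factors through.
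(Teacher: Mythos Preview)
Your proof is correct and follows essentially the same approach as the paper: the paper's own proof is a one-line remark that the argument is analogous to the case over~$\bbC$ treated in Sections~\ref{subsec:classical} and~\ref{subsec:deformation} and in~\cite{BrzezinskiSitarz}, and what you have written is precisely a careful unpacking of that analogy --- base change to $R=\overline{\OO}(\bbR)$, realise $A$ as the $\sigma_Q$-twist of $A_0\otimes R$, and then either invoke the cocycle-deformation principality result as in Proposition~\ref{propo:HGnoncommutative} or verify the explicit strong connection directly as in Proposition~\ref{propo:HGcommutative}. Your caveat about the ground ring not being a field, together with the fallback of checking~\eqref{str.con} by hand for the explicit~$\ell$, is exactly the right point of care here.
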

\begin{proof}
The proof is analogous to the one for algebras over $\bbC$, cf.\ Sections~\ref{subsec:classical} and~\ref{subsec:deformation} and also~\cite{BrzezinskiSitarz}.
\end{proof}

We can now show that this principal comodule algebra
implements an $\overline{\OO}(\bbR)$-homotopy equivalence
between the classical Hopf fibration $A_0$ and the
deformed Hopf fibration $A_1$.
\begin{propo}\label{propo:homotopy}
There exist isomorphisms of $H$-comodule algebras
${\mathrm{ev}_p}_\ast(A) \cong A_p$, for $p=0,1$.
Hence, the classical Hopf fibration $A_0$ is $\overline{\OO}(\bbR)$-homotopy equivalent
to the deformed Hopf fibration $A_1$ for any value of the deformation parameter $\theta\in\bbR$.
\end{propo}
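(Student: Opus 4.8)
The plan is to compute the two base changes ${\mathrm{ev}_0}_\ast(A)$ and ${\mathrm{ev}_1}_\ast(A)$ explicitly and to recognise them, together with their right $H$-coactions, as the classical Hopf fibration $A_0$ and the deformed Hopf fibration $A_1$, respectively.

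First I would reuse the argument already applied in the excerpt to $H\otimes\overline{\OO}(\bbR)$ and $B\otimes\overline{\OO}(\bbR)$: the change of base ring functor $(-)\otimes_{\overline{\OO}(\bbR)}\bbC$ along an algebra homomorphism $\mathrm{ev}_p\colon\overline{\OO}(\bbR)\to\bbC$ is a left adjoint, hence preserves all small colimits, so it sends a finitely presented $\ast$-algebra over $\overline{\OO}(\bbR)$ to the finitely presented $\ast$-algebra over $\bbC$ with the same generators and with relations obtained by applying $\mathrm{ev}_p$ to all coefficients. Applied to $A$, presented by $z_1,z_2$ subject to \eqref{eqn:Qcomrel} and \eqref{eqn:sphereQrel}, this shows that ${\mathrm{ev}_p}_\ast(A)$ is the $\ast$-algebra over $\bbC$ on $z_1,z_2$ with the relations obtained from \eqref{eqn:Qcomrel}, \eqref{eqn:sphereQrel} by replacing $Q$ with the scalar $\mathrm{ev}_p(Q)=Q(p)$ from \eqref{eqn:Qmap}.

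Next I would substitute the two values $Q(0)=1$ and $Q(1)=q={\rm e}^{2\pi{\rm i}\theta}$. For $p=0$ the relations \eqref{eqn:Qcomrel} become full commutativity of $z_1,z_2,z_1^\ast,z_2^\ast$, which together with \eqref{eqn:sphereQrel} is exactly the presentation of $A=\OO(\bbS^3)$ from Section~\ref{subsec:classical}; hence ${\mathrm{ev}_0}_\ast(A)\cong A_0$. For $p=1$ the relations \eqref{eqn:Qcomrel}, \eqref{eqn:sphereQrel} become exactly the Connes--Landi commutation relations \eqref{eqn:ConnesLandirelations} together with \eqref{eqn:starsphererelation}; hence ${\mathrm{ev}_1}_\ast(A)\cong A_1$. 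Since $\mathrm{ev}_p$ sends $H\otimes\overline{\OO}(\bbR)$ to $H$ and the coaction $\delta$ is defined on generators by the same formulae \eqref{eqn:Hcoaction} in all cases, base changing $\delta$ along $\mathrm{ev}_p$ yields precisely the $H$-coaction on $A_p$, so both isomorphisms are isomorphisms of $H$-comodule algebras. Combining this with the preceding lemma, which states that $A$ is a principal $H\otimes\overline{\OO}(\bbR)$-comodule algebra over $B\otimes\overline{\OO}(\bbR)$ with ground ring $\overline{\OO}(\bbR)$, Definition~\ref{def:homotopy} immediately gives $A_0\sim A_1$, and hence $A_0\approx A_1$, for every $\theta\in\bbR$.

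The step requiring the most care is the first one: justifying that ${\mathrm{ev}_p}_\ast$ of a presented algebra is the ``obviously'' presented algebra, despite $\mathrm{ev}_p\colon\overline{\OO}(\bbR)\to\bbC$ being far from flat. The clean way is to note that $(-)\otimes_{\overline{\OO}(\bbR)}\bbC$ is monoidal and cocontinuous and hence commutes with the relevant pushout/coequalizer diagram of free algebras exhibiting the presentation of $A$ --- exactly as the excerpt does for $H\otimes\overline{\OO}(\bbR)$ and $B\otimes\overline{\OO}(\bbR)$. Everything after that is a routine comparison of relations and coactions.
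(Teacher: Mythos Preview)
Your proposal is correct and follows essentially the same route as the paper: compute ${\mathrm{ev}_p}_\ast(A)$ by evaluating all coefficients at $p$, then match the resulting presentations with those of $A_0$ and $A_1$ and observe that the $H$-coactions agree. The only cosmetic difference is in how the ``base change preserves presentations'' step is justified: the paper argues directly that the surjection $A\to A\otimes_{\overline{\OO}(\bbR)}\bbC$ has kernel the ideal generated by $(f-f(p))\,\1$, whereas you invoke cocontinuity and monoidality of the base change functor (exactly as the paper does earlier for $H\otimes\overline{\OO}(\bbR)$ and $B\otimes\overline{\OO}(\bbR)$); these are two standard phrasings of the same fact.
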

\begin{proof}
Using the map $\bbC\to \overline{\OO}(\bbR)$, $c\mapsto c$ that assigns
to a complex number the corresponding constant function on $\bbR$, we
can consider $A$ as an algebra over $\bbC$.
The homomorphism of $\bbC$-algebras
$A \to {\mathrm{ev}_p}_\ast(A) = A\otimes_{\overline{\OO}(\bbR)}\bbC$,
$a\mapsto a\otimes_{\overline{\OO}(\bbR)} 1$ is surjective as
$a\otimes_{\overline{\OO}(\bbR)} c = ac \otimes_{\overline{\OO}(\bbR)} 1 $.
The kernel of this map is the two-sided ideal of the $\bbC$-algebra $A$ generated
by $(f -f(p)) \1\in A$, for all $f\in \overline{\OO}(\bbR)$. Hence, ${\mathrm{ev}_p}_\ast(A)$
is isomorphic to the quotient $\bbC$-algebra $A\big/\big((f-f(p)) \1 \colon f\in \overline{\OO}(\bbR)\big)$,
i.e., the evaluation of all coefficient functions in $\overline{\OO}(\bbR)$
at $p\in\bbR$. Recalling~\eqref{eqn:Qmap},
\eqref{eqn:Qcomrel} and~\eqref{eqn:sphereQrel}, we obtain that
${\mathrm{ev}_0}_\ast(A)$ is isomorphic to the classical $3$-sphere
due to $Q(0) =1$ and that ${\mathrm{ev}_1}_\ast(A)$ is
isomorphic to the deformed $3$-sphere with deformation parameter
$\theta$ as $Q(1) = q={\rm e}^{2\pi {\rm i}\theta}$. These isomorphisms are clearly
compatible with the $H$-coactions.
\end{proof}

We finish this section by explaining why we believe that
Proposition \ref{propo:homotopy} is the conceptual reason
for the results in Section \ref{subsec:associated} stating that the associated module functors
for the classical and deformed Hopf fibration are naturally isomorphic.
The following argument is inspired by \cite[Remark 2.4(4)]{KS}.
Let us first note that the underlying vector space of the Hopf algebra $H = \OO(U(1))$ admits a decomposition
$H = \bigoplus_{n\in\bbZ} C_n$, where $C_n =\bbC \,t^n$ is the
($1$-dimensional) vector space spanned by the $n$-th power of the generator $t$.
(Recall that $t^{-1} = t^\ast$.)
As $\Delta(t^n)= t^n\otimes t^n$,
each $C_n$ is a left $H$-comodule via the coaction $\Delta\colon C_n\to H\otimes C_n$.
It follows that $A \cong \bigoplus_{n\in\bbZ} A\square_H C_n$
is a direct sum of associated modules $E_A(C_n) = A\square_H C_n$
with $C_n\in \lcomf H $ finite-dimensional. By \cite{BrzHaj:Che},
each $E_A(C_n) $ is a finitely generated projective left $B\otimes \overline{\OO}(\bbR)$-module
and hence it defines an element $[E_A(C_n)]$ of the
zeroth $K$-theory group $K_0\big(B\otimes \overline{\OO}(\bbR)\big)$. Analogously, there exists a decomposition
$A_p \cong \bigoplus_{n\in\bbZ} A_p \square_H C_n$ into a direct sum
of associated modules $E_{A_p}(C_n) = A_p\square_H C_n$
and we obtain elements $[E_{A_p}(C_n)]\in K_0(B)$, for $p=0,1$.
Because of Proposition \ref{propo:homotopy},
we know that $[E_{A_p}(C_n)]\in K_0(B)$ is the image under
\begin{gather*}
K_0({\mathrm{ev}_p}_\ast) \colon \ K_0\big(B \otimes \overline{\OO}(\bbR)\big) \longrightarrow K_0(B)
\end{gather*}
of the element $[E_A(C_n)]\in K_0\big(B\otimes \overline{\OO}(\bbR)\big)$, for $p=0,1$.
The same holds true for the modu\-les~$E_A(V)$, $E_{A_0}(V)$ and
$E_{A_1}(V)$ associated to any finite-dimensional left $H$-comodule
as any such~$V$ decomposes as a finite direct sum of~$C_n$'s.

If we could prove that the $K_0$-groups are invariant under
$\overline{\OO}(\bbR)$-homotopy equivalences, the concrete
result of Section~\ref{subsec:associated}, namely that the associated modules
$E_{A_0}(V)$ and $E_{A_1}(V)$ of the classical and deformed Hopf fibrations
are isomorphic, would follow from the more conceptual argument in this section.
While results in this direction are available for homotopy equivalences described by the polynomial
algebra $\bbC[y]$, see, e.g.,~\cite{Bass}, we are not aware of generalizations
to the setup of $ \overline{\OO}(\bbR) $-homotopy equivalences. We expect that addressing
this question might provide some insights on appropriate choices of
the algebra $\overline{\OO}(\bbR) $, which we have left unspecified above,
besides assuming that it contains exponential functions and polynomials.
Developing a theory of $\overline{\OO}(\bbR) $-homotopy equivalences would be useful
and interesting also for other typical examples in noncommutative geometry
where the deformation parameter appears in an exponentiated form $q= {\rm e}^{2\pi {\rm i}\theta}$.
We hope to come back to this issue in a future work.

\subsection*{Acknowledgments}
We would like to thank Christian Lomp and Alexander Vishik for useful comments
related to this work. We also would like to thank the anonymous referees
for their valuable comments and suggestions that helped us to improve this manuscript.
The research of T.B.~was partially supported by the Polish National Science
Centre grant 2016/21/B/ST1/02438.
A.S.~gratefully acknowledges the financial support of
the Royal Society (UK) through a Royal Society University
Research Fellowship, a Research Grant and an Enhancement Award.

\pdfbookmark[1]{References}{ref}
\LastPageEnding

\end{document}